\numberwithin{equation}{section}
\newcommand{\di}{\diamond}
\newcommand{\dw}{\dot{W}}
\newcommand{\ii}{\imath}
\newcommand{\iot}{\int_{0}^{t}}
\newcommand{\ou}{[0,1]}
\newcommand{\1}{{\bf 1}}
\newcommand{\ts}{t^{*}}
\newcommand{\lastchange}[1]{{#1}}
\newcommand{\E}{\mathbb E}
\newcommand{\R}{\mathbb R}
\newcommand{\PP}{\mathbb P}
\newcommand{\be}{\mathbf{E}}
\newcommand{\bh}{\mathbf{H}}
\newcommand{\bp}{\mathbf{P}}
\newcommand{\ca}{\mathcal A}
\newcommand{\cf}{\mathcal F}
\newcommand{\cg}{\mathcal G}
\newcommand{\ch}{\mathcal H}
\newcommand{\cj}{\mathcal J}
\newcommand{\ck}{\mathcal K}
\newcommand{\cn}{\mathcal N}
\newcommand{\cs}{\mathcal S}
\newcommand{\al}{\alpha}
\newcommand{\ep}{\varepsilon}
\newcommand{\ga}{\gamma}
\newcommand{\ka}{\kappa}
\newcommand{\la}{\lambda}
\newcommand{\oom}{\Omega}
\newcommand{\vp}{\varphi}
\newcommand{\lp}{\left(}
\newcommand{\rp}{\right)}
\newcommand{\lc}{\left[}
\newcommand{\rc}{\right]}
\newcommand{\lcl}{\left\{}
\newcommand{\rcl}{\right\}}
\newcommand{\lln}{\left|}
\newcommand{\rrn}{\right|}
\newcommand{\lla}{\left\langle}
\newcommand{\rra}{\right\rangle}
\newtheorem{theorem}{Theorem}[section]
\newtheorem{definition}[theorem]{Definition}
\newtheorem{lemma}[theorem]{Lemma}
\newtheorem{proposition}[theorem]{Proposition}
\theoremstyle{remark}
\newtheorem{remark}[theorem]{Remark}
\theoremstyle{definition}
\newtheorem{notation}[theorem]{Notation}
\newtheorem{contribution}{Contribution}
\newcommand{\bean}{\begin{eqnarray*}}
\newcommand{\eean}{\end{eqnarray*}}
\newcommand{\ben}{\begin{enumerate}}
\newcommand{\een}{\end{enumerate}}
\newcommand{\beq}{\begin{equation}}
\newcommand{\eeq}{\end{equation}}
\begin{document}

\begin{frontmatter}
\title{Moment estimates for some renormalized\\ parabolic Anderson models}

\begin{aug}
\author{\fnms{Xia} \snm{Chen}\thanksref{m1}\ead[label=e1]{xchen@math.utk.edu}},
\author{\fnms{Aur\'{e}lien} \snm{Deya}\thanksref{m2}\ead[label=e2]{Aurelien.Deya@univ-lorraine.fr}},
\author{\fnms{Cheng} \snm{Ouyang}\thanksref{m3}\ead[label=e3]{couyang@math.uic.edu}}
\and
\author{\fnms{Samy} \snm{Tindel}\thanksref{m4}
\ead[label=e4]{stindel@purdue.edu}
\ead[label=u1,url]{http://www.foo.com}}

\runauthor{X. Chen, A. Deya,  C. OUYANG AND S. TINDEL}


\affiliation{University of Tennessee Knoxville\thanksmark{m1}, University of Lorraine\thanksmark{m2} ,University of Illinois at Chicago\thanksmark{m3} and \\Purdue University\thanksmark{m4}}

\address{X. Chen\\Department of Mathematics\\University of Tennessee Knoxville\\
813 Swanston Street\\
TN 37996-1300\\
United States\\
\printead{e1}\\}

\address{A. Deya\\Institut Elie Cartan\\University of Lorraine\\B.P. 239, 54506 Vandoeuvre-l\`es-Nancy, Cedex\\France\\
\printead{e2}\\}

\address{C. Ouyang\\Department of Mathematics, Statistics\\ and Computer Science\\University of Illinois at Chicago\\
851 S Morgan St\\
Chicago  IL 60607\\United States\\
\printead{e3}\\}

\address{S. Tindel\\Department of Mathematics\\Purdue University\\
150 N. University Street\\
West Lafayette, IN 47907-2067\\United States\\
\printead{e4}\\}

\end{aug}

\begin{abstract}
The theory of regularity structures enables the definition of the following parabolic Anderson model in a very rough environment: $\partial_{t} u _{t}(x) = \frac12 \Delta u_{t}(x) + u_{t}(x) \, \dot W_{t}(x)$, for $t\in\R_{+}$ and $x\in \R^{d}$, where $\dot W_{t}(x)$ is a Gaussian noise whose space time covariance function is singular. In this rough context, we shall give some information about the moments of $u _{t}(x)$ when the stochastic heat equation is interpreted in the Skorohod as well as the Stratonovich sense. Of special interest is the critical case, for which one observes a blowup of moments for large times.
\end{abstract}

\begin{keyword}[class=MSC]
\kwd{60L30}
\kwd{60L50}
\kwd{60F10}
\kwd{60K37.}
\end{keyword}

\begin{keyword}
\kwd{parabolic Anderson model}
\kwd{regularity structures}
\kwd{Skorohod equation}
\kwd{Stratonovich equation}
\kwd{moment estimate}
\kwd{critical time.}
\end{keyword}

\end{frontmatter}

\section{Introduction}
The parabolic Anderson model (sometimes abbreviated as \textsc{pam} in the sequel) is a linear partial differential equation in a random environment. As for other widely studied objects, many different versions of the model have been analyzed in the literature. In this paper we are concerned with the following continuous version of \textsc{pam} defined for $(t,x)\in\R_+\times\R^d$:
\begin{equation}\label{eq:she-intro}
\partial_{t} u _{t}(x) = \frac12 \Delta u_{t}(x) + u_{t}(x)\, \dot{W}_t(x),
\end{equation}
where $\Delta$ stands for the Laplace operator and $\dot{W}$ is a centered Gaussian noise. Equation \eqref{eq:she-intro} is obviously a stochastic PDE, and the generalized differential element $u_t(x)\dot{W}(x)$ will be interpreted either in the Skorohod sense (for which the product $u_t(x)\diamond\dot{W}_t(x)$ is considered as a Wick product) or in the Stratonovich sense (where the product $u_t(x)\dot{W}_t(x)$ is the usual one). Notice that we consider those two versions of the model for the sake of generality, but also because we will transfer some information from the Skorohod to the Stratonovich equation.

The study of moments for equation \eqref{eq:she-intro} is at the heart of Anderson's initial motivation in the model. Indeed, the moments of $u_t(x)$ characterize the so-called intermittency phenomenon, as described in e.g. \cite{Kh}. Moments are also related to the localization of eigenvectors for the Anderson operator $L=\frac{1}{2}\Delta+\dot{W}$, since $u_t(x)$ can be seen as the Laplace transform of the spectral measure of $L$ (see \cite[Relation (2.27)]{konig_book}). This is why quantities of the form $\be[|u_{t}(x)|^{p}]$, for a given $p>1$ and for the solution $u$ to \eqref{eq:she-intro}, have been intensively analyzed in the recent past. Let us mention \cite{CFK,CFJK,CJK} when $\dot{W}$ is a white noise in time and \cite{Ch14,CHNT,HHNT} for fractional noises.

The current paper can be seen as an additional step towards moment estimates for the parabolic Anderson model. Namely our study aims at giving some information about the moments of equation \eqref{eq:she-intro} when $\dot{W}$ is a very rough environment, given as the formal derivative of a multiparametric fractional Brownian motion $W$. Specifically, consider a centered Gaussian process $W$ indexed by $\R_{+}\times\R^{d}$ and defined on a complete probability space $(\Omega, \mathcal{F}, \mathbb{P})$, whose covariance is expressed as:
\begin{equation*}
\be\lc  W_{t}(x) \, W_{s}(y) \rc
=
R_{0}(s,t) \, \prod_{j=1}^{d} R_{j}(x_{j},y_{j}) ,
\end{equation*}
where $\{H_{j};\, 0\le j \le d\}$ is a family of Hurst indices in $(0,1)$ and the covariance function $R_{j}$ is defined by
\begin{equation}\label{eq:def-Rj}
R_{j}(u,v)
=
\frac12\lp |u|^{2H_{j}} + |v|^{2H_{j}} - |u-v|^{2H_{j}}  \rp, \qquad u,v\in\R.
\end{equation}
Then the noise $\dw$ driving equation \eqref{eq:she-intro} has to be thought of as the (ill-defined) derivative $\partial_{t\,x_{1}\cdots x_{d}}^{d+1}W_{t}(x)$. In this paper we are interested in noises which are rougher than white noise in at least some directions. Otherwise stated we wish some of the $H_j$'s in \eqref{eq:def-Rj} to be smaller than $\frac{1}{2}$. Recall that the covariance function of $\dot{W}$ is formally written as
\begin{equation}\label{eq:def-gamma}
\be\lc  \dw_{t}(x) \, \dw_{s}(y) \rc
=
\ga_{0}(t-s) \, \ga(x-y),
\quad\text{with}\quad
\ga(x-y)\equiv\prod_{j=1}^{d} \ga_{j}(y_{j}-x_{j}),
\end{equation}
where each $\ga_{j}$ is the distributional derivative $\partial_{uv}^{2}R_{j}$. Notice that whenever $H_{j}<\frac12$ the covariance $\ga_{j}$ is a distribution. Therefore we will often express $\ga_{0}$ and $\ga$ in Fourier modes as
\begin{equation}\label{eq:rep-gamma-fourier}
\ga_{0}(t) =  \int_{\R} e^{\ii \la t} \mu_{0}(d\la),
\quad\text{and}\quad
\ga(x) =  \int_{\R^{d}} e^{\ii \xi \cdot x} \mu(d\xi),
\end{equation}
where the measures $\mu_{0}$ and $\mu$ on $\R^{d}$ are respectively defined by 
\begin{equation}\label{eq:def-mu}
\mu_{0}(d\la)= c_{0} \, |\la|^{1-2H_{0}}d\lambda,
\quad\text{and}\quad
\mu(d\xi) = c_{\bh} \prod_{j=1}^{d} |\xi_{j}|^{1-2H_{j}}d\xi,
\end{equation}
where $\bh$ denotes the vector $(H_1,\ldots,H_d)$ and where $c_{0},c_{\bh}$ are explicit positive constants. We should already observe at this point that the mere existence of a solution to equation~\eqref{eq:she-intro} in the rough environment given by~\eqref{eq:def-gamma} requires a delicate analysis of intersection local times in the Skorohod setting \cite{Ch18}, and a cumbersome renormalization procedure for the Stratonovich case (see \cite{De16,De17,HL} for some related models).

\smallskip

In order to describe the main results contained in this article let us start with the Skorohod setting for equation \eqref{eq:she-intro}, for which we will assume that $H_0>\frac{1}{2}$. Within this framework we define a family of coefficients describing the behavior of our model. Namely set
\begin{equation}\label{defi:j-ast}
J_{*} = \lcl  1\le j \le d; \, H_{j} <\frac12  \rcl, 
\quad d_{*} = |J_{*}|,
\quad H_{*} = \sum_{j\in J_{*}} H_{j},
\quad H = \sum_{j=1}^{d} H_{j}.
\end{equation}
We also define some similar quantities $J^{*}, d^{*}, H^{*}$ for the indices such that $H_{j} \ge\frac12$. Then the reference \cite{Ch18} exhibits a subcritical regime, for which there is existence and uniqueness of the solution to \eqref{eq:she-intro} interpreted in the Skorohod sense. This subcritical regime is characterized by the following set of conditions on $J_*, d_*, H_*$:
\begin{equation}\label{eq:sko-subcritical-regime}
d-H<1,
\quad\text{and}\quad
4(1-H_{0}) + 2(d-H) + (d_{*}-2H_{*}) <4.
\end{equation}
Denoting by $u^\di$ this solution, it is also proved in \cite{Ch18} that $u^\di$ admits moments of all orders, namely,
$$
\be\big[|u_t^\di(x)|^p\big]<\infty,\quad\mathrm{for\ all}\ t\geq0, \, x\in\R^d, \mathrm{and}\ p\geq1.
$$
In contrast with this nice situation, in the current article we will focus on the so-called critical regime. This means that~\eqref{eq:sko-subcritical-regime} is replaced by the following condition on $J_*, d_*, H_*$:
\begin{equation}\label{eq:sko-critical-regime}
d-H=1,
\quad\text{and}\quad
4(1-H_{0})  + (d_{*}-2H_{*}) <2.
\end{equation}
Under condition \eqref{eq:sko-critical-regime}, the moments of \lastchange{$u^\di_t(x)$} blow up for large time. This assertion will be quantified precisely in our article, leading to our first contribution (which will be stated more rigorously in Theorem \ref{thm:explosion-critical}).

\begin{contribution}\label{contribution1}
Assume that condition \eqref{eq:sko-critical-regime} is met, and recall that $u^\di$ designates the Skorohod solution of equation \eqref{eq:she-intro}. For all $p>1$ we define a critical time $t_0(p)$ as
\begin{align}\label{intro-t0}
t_0(p)=\frac{C_{H_0,\bh}}{(p-1)^{1/(2H_0-1)}}
\end{align}
where $C_{H_0,\bh}$ is given by an explicit variational inequality. Then the following holds true:
\begin{enumerate}[wide, labelwidth=!, labelindent=0pt, label=(\roman*)]
\setlength\itemsep{.1in}
\item For any $p\geq 2$, if $t<t_0(p)$ we have $\E\big[|u_t^\di(x)|^p\big]<\infty$ for all $x\in\R^d$;
\item\label{cont:critical-p-less-2} 
For all $p>1$, if $t>t_0(p)$ the $p$-th moment of $u^\di_t(x)$ blows up.
\end{enumerate}
\end{contribution}

\begin{remark}\label{rmk: contri 1}
Our Contribution \ref{contribution1} gives a rather complete picture of the moments problem for the Skorohod equation \eqref{eq:she-intro} in the critical regime. In addition, it is also clear from equation~\eqref{intro-t0} that $p\mapsto t_0(p)$ decreases from $+\infty$ to $0$ as $p$ varies in $(1,\infty)$. We firmly believe that $t_0(p)$ separates well behaved from ill-behaved $p$-th moments for $u^\di$, and this is what our Contribution \ref{contribution1} asserts for $p\geq2$. However, item (ii) in Contribution \ref{contribution1} only yields an upper bound for the critical time $t_0(p)$ when $p\in(1,2)$. Hence a full characterization of $t_0(p)$ for $p<2$ is still an open problem.
\end{remark}

{
\begin{remark}
Note that Contribution  \ref{contribution1} covers the particular and important case in which $d=2$ and $H_0=1, H_1=H_2=\frac{1}{2}$, that is, $\dot{W}$ is the spatial white noise.  In this situation, the constant $C_{H_0,\bh}$ in \eqref{intro-t0} is given by $\kappa(2,2)^{-4}$ where $\kappa(2,2)$ is the Gagliardo-Nirenberg constant for $p=2, d=2$ (see Remark \ref{relation GN inequality} below for more details). 
\end{remark}
}

{Our second contribution focuses on equation \eqref{eq:she-intro} interpreted in the Stratonovich sense. Assuming that the coefficients $H_{0},\bh\in(0,1)^{d+1}$ verify
\begin{equation}\label{eq:strato-subcritical-regime}
d+\frac23 <2H_{0} + H \leq d+1,
\end{equation}
a global Stratonovich solution $u$ can indeed be constructed in a suitable weighted space, using the regularity structure formalism and a renormalization procedure (see Section \ref{sec:existence-strato} for further details).} 

\smallskip

{
As for the Skorohod case mentioned above,
 we are mostly interested here in getting some information about the moments of the Stratonovich solution $u$. 
This problem is challenging in a renormalized context, and
to the best of our knowledge unaddressed in the literature. We are only aware of the reference~\cite{GH} about existence of moments in small time for the 2-dimensional spatial white noise case. Our next contribution, which is summarized below, aims at filling this gap. 
}

\

\begin{contribution}\label{cont:moments-strato}
{Under the assumption \eqref{eq:strato-subcritical-regime}}, let $u_{t}(x)$ be the (renormalized) Stratonovich solution of equation \eqref{eq:she-intro} constructed from the regularity structure formalism. Then the following holds true:
\begin{enumerate}[wide, labelwidth=!, labelindent=0pt, label=(\roman*)]
\setlength\itemsep{.1in}
\item[]\label{cont:moments-strato-subcritical}
Under the {additional} subcritical assumption \eqref{eq:sko-subcritical-regime}, the random variable $u_{t}(x)$ admits moments of all orders. More specifically, we have
\begin{equation*}
\be\big[|u_t(x)|^p\big]<\infty,\quad\mathrm{for\ all}\ t\geq0, \, x\in\R^d, \mathrm{and}\ p\geq1.
\end{equation*}


\end{enumerate}
\end{contribution}

\begin{remark}
{To the best of our knowledge, the above Contribution \ref{cont:moments-strato} is the first result establishing moments of any order in a renormalized Stratonovich setting.}
\end{remark}

\begin{remark}
{
On top of~\cite{GH} let us mention the work~\cite{AC2015}, in which  the authors study equation~\eqref{eq:she-intro} driven by a spatial white noise on a $2$-$d$ torus. The exponential tail bounds for the minimal eigenvalue of the Anderson operator established in~\cite{AC2015} would certainly lead to a blowup of moments result for equation~\eqref{eq:she-intro}. However, their setting is restricted to the spatial white noise on a compact 2-d space, while we are considering a more general class of noises on the whole space $\R^d$ for all $d$. The 2-d white noise analyzed in~\cite{AC2015} happens to be a critical case within our more global picture. It should also be highlighted that our approach aims at obtaining the critical exponential integrability of the solution, instead of a simple non quantified exponential integrability.
}
\end{remark}

Gathering our Contributions~\ref{contribution1} and \ref{cont:moments-strato}, our main goal in this paper is thus to exhibit the complexity of the moments problem for the PAM in very rough environments. 

\smallskip

Let us say a few words about the methodology invoked in order to achieve the main contributions summarized above. We have relied on the following tools:
\begin{enumerate}[wide, labelwidth=!, labelindent=0pt, label=(\alph*)]
\setlength\itemsep{.05in}
\item 
In order to analyze the moments of the Skorohod solution in the critical case (i.e Contribution~\ref{contribution1}), we hinge on the Feynman-Kac representation of $u^{\di}$. This representation involves intersection local times of a Brownian motion weighted by the covariance function of the noise $\dw$. We then proceed to a thorough analysis of those intersection local times, thanks to a delicate truncation procedure involving a Girsanov type transform on the Brownian paths. Notice that the basic aim of the truncation mechanism is (in the end) to apply some classical asymptotic results for Feyman-Kac functionals. For the case of $p$-th moments for $p\in(1,2)$ (that is item~\ref{cont:critical-p-less-2} in our Contribution~\ref{contribution1}), we use an additional ingredient based on $\cs$-transforms from white noise analysis.


\item
The fact that the Stratonovich solution admits moments of any order (Contribution~\ref{cont:moments-strato} item~\ref{cont:moments-strato-subcritical}) stems again from Feynman Kac representations. More precisely, the difference between the Feynman-Kac representations of $u$ and $u^{\di}$ corresponds to some fluctuations of the Brownian intersection local times. Then the bulk of our analysis consists in quantifying the fact that the intersection local time fluctuations are negligible with respect to the term involving $\dw$.
\end{enumerate}

\noindent
As one can see, Feynman-Kac representations are the key ingredient in order to get our main contributions. Those are combined throughout the paper with regularity structures and Malliavin calculus elements. As a conjecture,  let us also recall from Remark \ref{rmk: contri 1} that $t_{0}(p)$ defined by~\eqref{intro-t0} seems to be the critical time for the moments of $u^{\di}$ when $p<2$ (as well as for the moments of the Stratonovich solution $u$ in the critical case). We hope to prove this claim in the future, although it currently seems to resist our method of analysis.

\smallskip

{The study is organized as follows. First, in Section \ref{sec:malliavin}, we will recall some basics about the Malliavin calculus setting and the Skorohod integration procedure. This preliminary material will in fact set the stage for Section \ref{section:Skorohod case}, where equation \eqref{eq:she-intro} will be investigated in the Skorohod sense, leading to the above-described Contribution \ref{contribution1}. {Then, in Section \ref{sec:strato}, we will turn to the Stratonovich interpretation of the model, starting with a brief reminder on how the equation is interpreted and solved in a rough regime (Section \ref{sec:existence-strato}). Our second main Contribution \ref{cont:moments-strato} will be the topic of the subsequent Section \ref{section:moments-estim}, which will conclude the study.} 


\begin{notation}\label{not:general}
In the sequel we set $(\oom,\cf,\bp)$ for the probability space related to $W$, with $\be$ for the related expected value. We denote by $(\hat{\oom},\cg,\PP)$ the probability spaces corresponding to the Brownian motions in Feynman-Kac representations, with a related expected value $\E$. 
The heat kernel on $\R^{d}$ is denoted by $p_{t}(x)$, and recall that
\begin{equation}\label{eq:heat-kernel}
p_{t}(x) = \frac{1}{(2\pi t)^{d/2}} \, \exp\lp -\frac{|x|^{2}}{2t}  \rp.
\end{equation}
Also notice that the inner product of $a,b\in\R^d$ is written as $a\cdot b$ throughout the paper.
In the following we will often deal with product measures, for which we adopt the following convention: for $m, n\geq 1$, a measure $\nu$ on $\R^m$ and $\xi=(\xi_1,\dots,\xi_n)\in (\R^m)^n$,  we set
\begin{align}\label{convention product measure}\nu(d\xi)=\nu^{\otimes n}(d\xi)=\prod_{k=1}^n\nu(d\xi_k).\end{align}
Finally, recall that we write $\bh$ for the vector of Hurst parameters $(H_1,\dots,H_d)$.
\end{notation}

\section{Elements of Malliavin calculus}\label{sec:malliavin}

In this section we recall the basic Malliavin calculus notation which will be invoked in the forthcoming computations. The reader is {sent} to \cite{HHNT} and \cite{Nu-bk} for more details.

Let us start by introducing some basic notions on Fourier transforms of functions.  The space of real valued infinitely differentiable
functions with compact support is denoted by $\mathcal{D} ( \mathbb{R}^{d})$ or
$\mathcal{D}$. We write $\mathcal{S} (
\mathbb{R}^{d})$, or simply $\mathcal{S}$ for the space of Schwartz functions. Its dual, the space of tempered
distributions, is $\mathcal{S}' ( \mathbb{R}^{d})$ or $\mathcal{S}'$. If $f$ is
a vector of $n$ tempered distributions on $\mathbb{R}^{d}$,  then we write $f
\in \mathcal{S}' (\mathbb{R}^{d}, \mathbb{R}^n)$. In the sequel we will play with two different kinds of Fourier transforms on $\R^{d+1}$. Namely for a function $f(t,x)$ on $\R^{d+1}$, the Fourier transform  on the full space-time domain $\R^{d+1}$ is
defined with the normalization
\begin{align}\label{def:Fourier transform} 
\mathcal{F} f (\eta, \xi) = \int_{\mathbb{R}^{d+1}} e^{- \imath\, 
   (t\eta+\xi\cdot x) } f (t,x) dtd x, 
   \end{align}
so that the inverse Fourier transform is given by $\mathcal{F}^{- 1} u (\eta, \xi)
= \frac{\mathcal{F}f (-t, - \xi)}{(2\pi)^{d+1}}$. Our analysis will also rely on a Fourier transform denoted by  $\mathcal{F}^{\textsc s}$, which is defined on the spatial variables only.  It is given by
\begin{align}
\label{def:space Fourier transform} 
\mathcal{F}^{\textsc s} f (t, \xi) = \int_{\mathbb{R}^{d}} e^{- \imath\, 
   \xi\cdot x } f (t,x) d x. 
  \end{align}

\subsection{Wiener space related to $W$}\label{intro Wiener space}

On a complete probability space $(\Omega,\cf,\bp)$ we consider a Gaussian noise $W$ encoded by a centered Gaussian family $\{W(\vp) ; \, \vp\in {\mathcal{D}}(\R_{+}\times\R^{d})\}$. As mentioned in the introduction (see relation \eqref{eq:rep-gamma-fourier}), the covariance structure of $W$ is given by
\begin{equation}\label{eq:covariance-W-with-Fourier}
\be\lc W(\vp) \, W(\psi) \rc
= 
\int_{\R_{+}^{2}}\int_{\R^{d}} 
\mathcal{F}^{\textsc s}\vp(s_1,\xi){\bar{\mathcal{F}^{\textsc s}}}\psi(s_2,\xi)\, \gamma_0(s_1-s_2)\, ds_1ds_2\, \mu(d\xi),
\end{equation}
where $\gamma_0$ and $\mu$ are respectively defined by \eqref{eq:def-gamma} and \eqref{eq:def-mu}, and where $\mathcal{F}^{\textsc s}\vp$ designates the spatial Fourier transform of $\vp$.
One can thus consider $W$ as an isonormal Gaussian family $\{W(\vp) ; \, \vp\in \ch\}$ on a space $\ch$ obtained as the completion of Schwartz functions with respect to the inner product given by the right hand side of~\eqref{eq:covariance-W-with-Fourier}.

\smallskip

We refer to \cite{Nu-bk}
for a detailed account of the Malliavin calculus with respect to a
Gaussian process, while we focus here on the basic definitions allowing to state our results in the remainder of the article.
We will denote by $D$ the derivative operator in the sense of
Malliavin calculus. That is, if $F$ is a smooth and cylindrical
random variable of the form
\begin{equation*}
F=f(W(\phi_1),\dots,W(\phi_n))\,,
\end{equation*}
with $\phi_i \in \mathcal{H}$, $f \in C^{\infty}_p (\R_{+}\times\R^{d})$ (namely $f$ and all
its partial derivatives have polynomial growth), then $DF$ is the
$\mathcal{H}$-valued random variable defined by
\begin{equation*}
DF=\sum_{j=1}^n\frac{\partial f}{\partial
x_j}(W(\phi_1),\dots,W(\phi_n)) \, \phi_j\,.
\end{equation*}
The operator $D$ is closable from $L^2(\Omega)$ into $L^2(\Omega;
\mathcal{H})$  and we define the Sobolev space $\mathbb{D}^{1,2}$ as
the closure of the space of smooth and cylindrical random variables
under the norm
\[
\|DF\|_{1,2}=\left( \be[F^2]+\be[\|DF\|^2_{\mathcal{H}}] \right)^{1/2}\,.
\]

For any integer $n\ge 0$ we denote by $\mathcal{H}_n$ the $n$th Wiener chaos of $W$. We recall that $\mathcal{H}_0$ is simply  $\R$ and for $n\ge 1$, $\mathcal {H}_n$ is the closed linear subspace of $L^2(\Omega)$ generated by the random variables $\{ H_n(W(h));h \in \mathcal{H}, \|h\|_{\mathcal{H}}=1 \}$, where $H_n$ is the $n$th Hermite polynomial.
For any $n\ge 1$, we denote by $\mathcal{H}^{\otimes n}$ (resp. $\mathcal{H}^{\odot n}$) the $n$th tensor product (resp. the $n$th  symmetric tensor product) of $\mathcal{H}$. Then, the mapping $I_n(h^{\otimes n})= H_n(W(h))$ can be extended to a linear isometry between    $\mathcal{H}^{\odot n}$ (equipped with the modified norm $\sqrt{n!}\| \cdot\|_{\mathcal{H}^{\otimes n}}$) and $\mathcal{H}_n$.

Consider now a random variable $F\in L^2(\Omega)$ which is measurable with respect to the $\sigma$-field  $\mathcal{F}^W$ generated by $W$. This random variable can be expressed as
\begin{equation}\label{eq:chaos-dcp}
F= \be \lc F\rc + \sum_{n=1} ^\infty I_n(f_n),
\end{equation}
where the series converges in $L^2(\Omega)$. Moreover, the functions $f_n \in \mathcal{H}^{\odot n}$ are determined by $F$.  Identity \eqref{eq:chaos-dcp} is called the Wiener chaos expansion of $F$.

\subsection{Extended Skorohod integrals}

The standard definition of Skorohod's integral is obtained by a duality relation in $L^{2}(\oom)$. Namely, we denote by $\delta^{\di}$ the $L^{2}$-adjoint of the derivative operator given
by the duality formula
\begin{equation}\label{dual}
\be \lc \delta^{\di} (u) \, F \rc =\be \lc \langle DF,u
\rangle_{\mathcal{H}}\rc , 
\end{equation}
for any $F \in \mathbb{D}^{1,2}$ and any element $u \in L^2(\Omega;\mathcal{H})$ in the domain of $\delta^{\di}$. More specifically, the domain of $\delta^\di$ is defined as the set of processes {$u \in L^2(\Omega;\mathcal{H})$} such that for all cylindrical functions $F$ we have
$$|\be[\langle DF, u\rangle_{\mathcal{H}}]|\leq C_u\|F\|_{L^2(\Omega)}.$$
The operator $\delta^{\di}$ is
also called the {\it Skorohod integral} because in the 
Brownian motion case, it coincides with an extension of the It\^o
integral introduced by Skorohod.  

In the sequel we consider random variables which are only $L^{p}$-integrable with $p\in(1,2)$.
We thus need the following extension of the Skorohod integral with respect to the noise $W$.

\begin{definition}  \label{def1}
Fix $p>1$. Let $u=\{ u(t,y), t\ge 0, y\in \R^{d}\}$ be a random field such that 
\[
 \left\|  \|  u({t,y})\|_{\ch} \right\|_{L^p(\Omega)}   <\infty.
 \]
  Then we say that  $u$ is Skorohod integrable in $L^p$, denoted by $u\in {\rm Dom}_p\delta^{\di}$, if for any smooth and cylindrical random variable $F\in \mathcal{S}$, we have 
\[
\Big|\be \lc  \lla DF , \, u \rra_{\ch} \rc \Big|  \le c_u \| F\|_{L^q(\Omega)},
\]
where $\frac 1p + \frac 1q =1$.  We say that $u$ is Skorohod integrable if $u\in {\rm Dom} \delta^{\di} =\cup_{p>1} {\rm Dom}_p\delta^{\di}$.
If $u\in {\rm Dom}_p\delta^{\di}$, the Skorohod integral 
$$
\delta^{\di}(u)=\int_{\R_{+}\times\R^{d}} u_s(y) d^{\di}W_{s}(y)
$$ 
is the random variable in $L^p(\Omega)$ defined by the duality relationship
\[
 \be \lc  \lla DF , \, u \rra_{\ch} \rc  
 =   \be \lc \delta^{\di}(u) \, F \rc,
 \quad\text{for all}\quad
 F\in \mathcal{S}.
 \]
\end{definition}

\section{Skorohod Case}\label{section:Skorohod case}
This section focuses on equation \eqref{eq:she-intro} interpreted in the Skorohod sense. In Section \ref{subsection exist and uniq} below we first recall some existence and uniqueness results for the equation. Then we will give some information about the moments in the subcritical case.
\subsection{Existence and uniqueness results}\label{subsection exist and uniq}

In this section we first give some existence and uniqueness results for equation \eqref{eq:she-intro} considered in the Skorohod sense. Those results are mainly taken from \cite{Ch18}, although we will also deal with extended solutions in the sense of Definition \ref{def1}. Specifically, we consider equation \eqref{eq:she-intro} considered in the Skorohod sense, that is 
\begin{equation}\label{eq:she-sko}
\partial_{t} u _{t}(x) = \frac12 \Delta u_{t}(x) + u_{t}(x) \, \di \dot{W}_{t}(x), \quad t\in\R_{+}, x\in \R^{d}.
\end{equation}
Recall from Notation \ref{not:general}  that we  denote by $p_{t}(x)$ the
$d$-dimensional heat kernel 
$p_{t}(x)=(2\pi t)^{-d/2}e^{-|x|^2/2t} $, for any $t > 0$, $x \in \R^d$.
We define the solution of equation~\eqref{eq:she-sko} as follows.

\begin{definition}\label{def3}
Let $T>0$.  Consider a random field $u=\{u_{t}(x); 0\leq t\leq T, x \in \mathbb{R}^{d}\}$ such
that there exists $p>1$ satisfying
\begin{equation}  \label{eq1}
\sup_{0\le s\le T}\sup_{y\in \R^d}  \| u_s(y)\|_{L^p(\Omega)} <\infty .
\end{equation}  
Then $u$ is said to be a  mild solution
to equation \eqref{eq:she-sko} up to time $T$ with initial condition   $u_0\in C_b(\mathbb{R}^{d})$, 
if for any $t\ge0$ and $x\in\mathbb{R}^{d}$,
 the process $\{  u_{s}(y); \, s\ge 0, \,  y \in \mathbb{R}^d\}$ is Skorohod
integrable  in the sense  of Definition~\ref{def1}, and the following equation holds for any $t\ge 0$ and $x\in \R^d$:
\begin{equation}\label{eq:mild-sko}
u_{t}(x)=  p_t u_0(x)+ \iot \int_{\mathbb{R}^d} p_{t-s}(x-y)u_{s}(y)   d^{\di}W_{s}(y).
\end{equation}
\end{definition}

If we impose $p=2$ in Definition  \ref{def3}, then our definition coincides with the usual notion of Skorohod solution for equation \eqref{eq:she-intro}. In the subcritical regime $L^{p}$ moments are available for all $p\ge 2$, and therefore equation \eqref{eq:she-sko} can be solved in the usual sense. We recall this result (taken from \cite{Ch18}) for {the} sake of completeness. 

\begin{proposition}\label{prop:existence-sko-subcritical}
We assume that the coefficients $H_{0},\ldots,H_{d}$ satisfy the subcritical relation~\eqref{eq:sko-subcritical-regime} and that $H_{0}>\frac12$. Then equation  \eqref{eq:she-sko} admits a unique solution, considered in the sense of Definition~\ref{def3} with $p=2$.\end{proposition}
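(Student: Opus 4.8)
Since equation \eqref{eq:she-sko} is linear, the natural strategy is to construct the solution explicitly through its Wiener chaos expansion and then to verify that it satisfies Definition \ref{def3}. Iterating the mild formulation \eqref{eq:mild-sko} suggests that the candidate solution is
\[
u_t(x) = \sum_{n\ge 0} I_n\big(f_n(\,\cdot\,;t,x)\big),
\]
where $f_n(\,\cdot\,;t,x)$ is the symmetrization, in its $n$ space-time arguments, of the time-ordered kernel
\[
g_n(s_1,y_1,\ldots,s_n,y_n;t,x)=\mathbf 1_{\{0<s_1<\cdots<s_n<t\}}\, p_{t-s_n}(x-y_n)\cdots p_{s_2-s_1}(y_2-y_1)\,[p_{s_1}u_0](y_1).
\]
The plan is then: (i) to show that $\sum_{n\ge0} n!\,\|f_n(\,\cdot\,;t,x)\|_{\ch^{\otimes n}}^2<\infty$, uniformly in $(t,x)\in[0,T]\times\R^d$, so that the series above defines an element of $L^2(\Omega)$; (ii) to deduce from the same bound that the Picard iterates $u^{(N)}_t(x)=p_tu_0(x)+\int_0^t\int_{\R^d}p_{t-s}(x-y)u^{(N-1)}_s(y)\,d^{\di}W_s(y)$ are well defined, lie in ${\rm Dom}_2\delta^{\di}$, and converge to $u_t(x)$ in $L^2$ uniformly on $[0,T]\times\R^d$, which promotes $u$ to a genuine mild solution; (iii) to prove uniqueness.

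The quantitative heart of the argument is step (i), and here I would pass to the Fourier side via \eqref{eq:covariance-W-with-Fourier}. Since $H_0>\frac12$, the temporal covariance is the genuine, locally integrable kernel $\gamma_0(s)=c_{H_0}|s|^{2H_0-2}$, so no distributional subtlety arises in the time variable. After taking the spatial Fourier transform of the heat-semigroup factors (each $p_\tau$ turning into $e^{-\tau|\cdot|^2/2}$) and bounding $|[p_{s_1}u_0](y_1)|\le\|u_0\|_\infty$, standard manipulations reduce $n!\,\|f_n(\,\cdot\,;t,x)\|^2_{\ch^{\otimes n}}$ to $C^n$ times an integral, over a double time simplex in $[0,t]$ and over $\xi=(\xi_1,\ldots,\xi_n)\in(\R^d)^n$, of products of Gaussian exponentials in partial sums of the $\xi_i$'s, integrated against $\prod_{i=1}^n\gamma_0(s_i-s_i')\,ds_i\,ds_i'$ and $\mu(d\xi)$. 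The subcritical condition \eqref{eq:sko-subcritical-regime} is precisely what makes this integral finite and summable in $n$: the constraint $d-H<1$ is the Dalang-type condition ensuring that each spatial slab $\int_{\R^d}e^{-\tau|\xi|^2/2}\mu(d\xi)\lesssim\tau^{-(d-H)}$ has exponent $-(d-H)>-1$, hence an integrable singularity in the corresponding time increment, while the second inequality in \eqref{eq:sko-subcritical-regime} encodes the correct joint space--time scaling, so that integrating out all spatial variables and all time increments over the simplex leaves a bound of order $C_T^n/(n!)^\beta$ for some $\beta>0$, which is summable. This scaling computation is the substance of \cite{Ch18}, and I would import it here.

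For uniqueness (step (iii)) I would argue at the level of chaos expansions: any $u$ satisfying \eqref{eq1} with $p=2$ and solving \eqref{eq:mild-sko} is, for each $(t,x)$, square integrable and (being obtained from Skorohod integrals of $W$) $\mathcal{F}^W$-measurable, hence admits a decomposition $u_t(x)=\sum_n I_n(\phi_n(\,\cdot\,;t,x))$; inserting this expansion into \eqref{eq:mild-sko} and identifying the components on each Wiener chaos $\ch_n$ yields a recursion that forces $\phi_n=f_n$ for all $n$, so $u$ coincides with the solution constructed above. (Equivalently, one subtracts two solutions, iterates the mild equation, and closes a Gronwall-type estimate using the bound of step (i).) The main obstacle is, unsurprisingly, the estimate of step (i): because the spatial covariance $\mu$ is a genuine distribution when some $H_j<\frac12$, the whole argument must be run in Fourier variables, which makes the bookkeeping of the partial-sum arguments $\xi_i+\cdots+\xi_n$ delicate, and one must extract the precise summable rate in $n$ from the borderline-looking conditions \eqref{eq:sko-subcritical-regime}; the hypothesis $H_0>\frac12$ is also used in an essential way, to keep $\gamma_0$ a function rather than a distribution and thereby avoid a second layer of renormalization in the time variable.
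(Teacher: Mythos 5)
Your proposal is correct and follows essentially the same route as the paper's treatment: the paper does not prove this proposition itself but imports it verbatim from \cite{Ch18}, where existence and uniqueness are indeed established through the Wiener chaos expansion with iterated heat-kernel kernels, Fourier-side bounds on $n!\,\|f_n\|^2_{\ch^{\otimes n}}$ under the subcritical condition \eqref{eq:sko-subcritical-regime}, and uniqueness by chaos identification. Since the quantitative heart of your step (i) is exactly the estimate you propose to import from \cite{Ch18}, your sketch matches the paper's (referenced) proof in both structure and substance.
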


\begin{remark}
{Proposition \ref{prop:existence-sko-subcritical} has been extended in \cite{Ch19} to the case $H_0<1/2$.}
\end{remark}

In contrast with the subcritical situation, in the critical case one can only guarantee the existence of a usual Skorohod solution up to a critical time. This is the content of the following proposition, also borrowed from \cite{Ch18}.

\begin{proposition}\label{prop:local-existence-uniqueness-sko}
Assume that the critical condition \eqref{eq:sko-critical-regime} on $H_{0}$ and $\bh$ holds true, where we recall that the vector $\mathbf{H}$ is introduced in Notation \ref{not:general}.
Then one can find a strictly positive time $\ts=\ts(H_{0},\bh,d)$ such that there exists a unique $L^{2}(\oom)$-mild solution $u^{\di}$ of equation~\eqref{eq:mild-sko}  for $t\in[0,\ts)$.
\end{proposition}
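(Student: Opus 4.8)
The plan is to establish local existence and uniqueness for the Skorohod equation \eqref{eq:mild-sko} in the critical regime \eqref{eq:sko-critical-regime} by a Picard iteration in a space of $L^{2}(\oom)$-valued functions on a small time interval $[0,\ts)$, together with an explicit chaos-expansion representation of the candidate solution. First I would write the formal chaos expansion of the mild solution: iterating \eqref{eq:mild-sko} suggests $u_{t}(x)=\sum_{n\ge0} I_{n}(f_{n}(\cdot,t,x))$, where $f_{n}(\cdot,t,x)$ is the symmetrization in the time-space variables $(s_{1},y_{1}),\dots,(s_{n},y_{n})$ of the kernel
\begin{equation*}
p_{t-s_{n}}(x-y_{n})\,p_{s_{n}-s_{n-1}}(y_{n}-y_{n-1})\cdots p_{s_{2}-s_{1}}(y_{2}-y_{1})\,p_{s_{1}}u_{0}(y_{1})\,\mathbf{1}_{\{0<s_{1}<\cdots<s_{n}<t\}}.
\end{equation*}
The point is then to show that $\sum_{n}n!\,\|f_{n}(\cdot,t,x)\|_{\ch^{\otimes n}}^{2}<\infty$ for $t$ small enough, uniformly in $x$, which by the Wiener isometry recalled in Section \ref{sec:malliavin} gives both $u_{t}(x)\in L^{2}(\oom)$ and the bound \eqref{eq1}.

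The heart of the matter is therefore the estimate on $n!\,\|f_{n}(\cdot,t,x)\|_{\ch^{\otimes n}}^{2}$. Using the Fourier representation \eqref{eq:rep-gamma-fourier}--\eqref{eq:def-mu} of $\ga_{0}$ and $\ga$ and the covariance formula \eqref{eq:covariance-W-with-Fourier}, one expresses this quantity as an $n$-fold integral over frequency variables $\xi_{1},\dots,\xi_{n}\in\R^{d}$ (against $\mu(d\xi)$ as in \eqref{convention product measure}) and time variables, of the squared modulus of the space-time Fourier transform of the (non-symmetrized) kernel, with the symmetrization producing an additional sum over permutations that is absorbed by the $n!$. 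The time integrals involving the heat kernel produce, after carrying out the Gaussian spatial integrations explicitly, a product of factors of the type $\exp(-\tfrac12 s_{j}|\xi_{j}+\cdots+\xi_{n}|^{2})$ type bounds and fractional-in-time covariance factors $\ga_{0}(s_{i}-s_{j})$; the key is that, since $H_{0}>\tfrac12$, $\ga_{0}$ is a genuine (locally integrable) function, which allows the time integral to be bounded by $C^{n}t^{nH_{0}}/\Gamma(nH_{0}+1)$-type quantities. On the frequency side, the criticality relation $d-H=1$ combined with $4(1-H_{0})+(d_{*}-2H_{*})<2$ is exactly what is needed to make the resulting $n$-fold frequency integral converge with a constant growing at most geometrically in $n$; this matches the borderline scaling identified in \cite{Ch18}, so I would import the relevant frequency-integral lemma from there rather than reprove it. Combining the two, $n!\,\|f_{n}\|_{\ch^{\otimes n}}^{2}\le C^{n}t^{\alpha n}/\Gamma(\beta n+1)$ for suitable $\alpha,\beta>0$, hence the series converges for all $t<\ts$ with $\ts$ depending only on $H_{0},\bh,d$ through these constants.

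Uniqueness I would handle by a standard fixed-point/Gronwall argument: if $u$ and $v$ are two $L^{2}(\oom)$-mild solutions on $[0,\ts)$ satisfying \eqref{eq1} with $p=2$, then $w=u-v$ satisfies the homogeneous equation $w_{t}(x)=\iot\int_{\R^{d}}p_{t-s}(x-y)w_{s}(y)\,d^{\di}W_{s}(y)$; applying the $L^{2}$ isometry for the Skorohod integral and iterating this identity $n$ times, one bounds $\sup_{x}\|w_{t}(x)\|_{L^{2}(\oom)}^{2}$ by the same type of $n$-fold convolution kernel as above tested against $\sup_{s\le t,y}\|w_{s}(y)\|_{L^{2}(\oom)}^{2}$, and the $\Gamma(\beta n+1)^{-1}$ decay forces $w\equiv0$ on $[0,\ts)$.

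I expect the main obstacle to be the precise bookkeeping of the frequency-space integral at criticality: unlike the subcritical case \eqref{eq:sko-subcritical-regime}, here the integral sits exactly at the boundary of convergence, so one must track the dependence of the constants on $n$ with some care (in particular making sure the geometric constant $C$ does not secretly depend on $n$), and one must correctly separate the roles of the rough directions $j\in J_{*}$ from the regular ones $j\in J^{*}$ — the condition $4(1-H_{0})+(d_{*}-2H_{*})<2$ only constrains the rough block, and the identity $d-H=1$ is what ties the two blocks together. Since this is exactly the computation underlying the local existence statement in \cite{Ch18}, the cleanest route is to quote that reference for the analytic estimate and present here only the chaos-expansion setup, the assembly of the series bound, and the Gronwall-type uniqueness argument.
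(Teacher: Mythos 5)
The paper does not actually prove this proposition: it is quoted directly from \cite{Ch18}, so the only comparison available is with the argument of that reference, and your chaos-expansion skeleton (kernels $f_{n}$ given by time-ordered products of heat kernels, convergence of $\sum_{n} n!\,\|f_{n}(\cdot,t,x)\|_{\ch^{\otimes n}}^{2}$, uniqueness forced by the chaos representation or by iterating the homogeneous equation as in Proposition \ref{thm: uniqueness-extended Skorohod}) is indeed the standard route taken there. The genuine problem is the quantitative form of your key estimate. You claim $n!\,\|f_{n}\|_{\ch^{\otimes n}}^{2}\le C^{n}t^{\alpha n}/\Gamma(\beta n+1)$ with $\beta>0$; that is the \emph{subcritical} bound, and it cannot hold under \eqref{eq:sko-critical-regime}: if it did, the series would converge for every $t>0$ and the second moment of $u^{\di}_{t}(x)$ would be finite for all times, contradicting item \ref{critical-iii} of Theorem \ref{thm:explosion-critical} (blowup of the $L^{2}$-norm for $t>t_{0}(2)$). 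It is also internally inconsistent with your own conclusion, since a bound with genuine factorial decay would not produce any finite $\ts$. The whole point of criticality $d-H=1$ is that the factorial gain coming from the time-simplex integration exactly cancels the $n!$ from the chaos norm: the correct estimate (equivalently, \cite[relation (3.1)]{Ch18} applied with $d-H=1$, as the present paper itself uses in \eqref{eq: nth moment of bar Z_b^deta 4}) is purely geometric, $n!\,\|f_{n}(\cdot,t,x)\|_{\ch^{\otimes n}}^{2}\le \big(C\,t^{2H_{0}-1}\big)^{n}$, so the series converges only for $t<\ts:=C^{-1/(2H_{0}-1)}$ — this is precisely where the strictly positive but finite existence time comes from.

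Once the bound is corrected, the rest of your plan goes through with minor adjustments: the assumption $H_{0}>\tfrac12$ that you use to treat $\ga_{0}$ as a locally integrable function is automatic from \eqref{eq:sko-critical-regime} (since $d_{*}-2H_{*}\ge 0$ forces $4(1-H_{0})<2$), and for uniqueness you do not need any $\Gamma(\beta n+1)^{-1}$ decay: iterating the homogeneous equation $n$ times and using the geometric bound gives a factor $\big(C\,t^{2H_{0}-1}\big)^{n}\to 0$ for $t<\ts$, or alternatively one argues as in Proposition \ref{thm: uniqueness-extended Skorohod} that $w_{t}(x)$ is orthogonal to every finite Wiener chaos and hence vanishes. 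So the architecture of your proof is sound and consistent with the cited source, but the stated decay in $n$ of the chaos norms must be downgraded from factorial to geometric, as this is exactly what distinguishes the critical regime (local existence, blowup after $t_{0}(p)$) from the subcritical one.
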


Proposition \ref{prop:local-existence-uniqueness-sko} opens the way to a possible definition of the solution to equation \eqref{eq:mild-sko} in the $L^{p}$ sense for $p\in(1,2)$. While the existence of such a solution relies heavily on moment bounds which are obtained through Feynman-Kac representations, uniqueness holds true by just invoking the fact that we are dealing with a linear equation. This is summarized in the following proposition.

\begin{proposition}\label{thm: uniqueness-extended Skorohod}
As in Proposition \ref{prop:local-existence-uniqueness-sko}, we assume that the critical condition \eqref{eq:sko-critical-regime} on $H_{0}$ and $\bh$ holds true. For $p\in(1,2)$, suppose that one  can find a strictly positive time $\ts_{p}=\ts(p,H_{0},\bh,d)$ such that {there exists a $L^{p}(\oom)$-mild solution} $u^{\di}$ of equation~\eqref{eq:mild-sko}  on the interval $[0,\ts_{p})$. Then this solution is unique.
\end{proposition}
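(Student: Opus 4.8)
\section*{Proof proposal}

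The plan is to exploit the linearity of the Skorohod mild formulation \eqref{eq:mild-sko}. Suppose $u^{(1)}$ and $u^{(2)}$ are two $L^{p}(\oom)$-mild solutions on $[0,\ts_{p})$ associated with the same initial datum $u_{0}$, and set $v_{t}(x)=u^{(1)}_{t}(x)-u^{(2)}_{t}(x)$. Since $\dom_{p}\delta^{\di}$ is a vector space (it is cut out by a linear $L^{q}$-boundedness condition) and $\delta^{\di}$ is linear on it, the space--time kernels $p_{t-\cdot}(x-\cdot)\,v_{\cdot}(\cdot)\,\1_{[0,t]}(\cdot)$ belong to $\dom_{p}\delta^{\di}$ and $v$ is itself an $L^{p}$-mild solution with vanishing initial condition, namely
\[
v_{t}(x)=\delta^{\di}\!\big( p_{t-\cdot}(x-\cdot)\,v_{\cdot}(\cdot)\,\1_{[0,t]}(\cdot)\big),\qquad (t,x)\in[0,\ts_{p})\times\R^{d},
\]
together with $\sup_{0\le s<\ts_{p}}\sup_{y\in\R^{d}}\|v_{s}(y)\|_{L^{p}(\oom)}<\infty$ by \eqref{eq1}. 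It thus suffices to prove $v\equiv 0$.

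Because $v_{t}(x)$ need not be square integrable, it carries no genuine Wiener chaos expansion; instead I would track the ``generalized chaos coordinates'' $F\mapsto\be[v_{t}(x)\,F]$ as $F$ ranges over the chaoses $\ch_{n}$. These pairings are well defined: any $F$ in a fixed chaos lies in $L^{q}(\oom)$ for all $q<\infty$ by hypercontractivity, while $v_{t}(x)\in L^{p}$ with $\tfrac1p+\tfrac1q=1$. The heart of the argument is the claim that $\be[v_{t}(x)\,F]=0$ for every $n\ge 0$, every $F\in\ch_{n}$, every $x\in\R^{d}$ and every $t\in[0,\ts_{p})$, which I would prove by induction on $n$. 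For $n=0$ one tests the mild equation against the constant $F\equiv 1$ in the duality relation of Definition \ref{def1}: since $D1=0$, this gives $\be[v_{t}(x)]=\be[\langle D1,\,p_{t-\cdot}(x-\cdot)v_{\cdot}(\cdot)\rangle_{\ch}]=0$. For $n\ge 1$ it is enough to take $F=I_{n}(g^{\otimes n})$ with $\|g\|_{\ch}=1$, since such elements span $\ch_{n}$ and, crucially, $DF=n\,I_{n-1}(g^{\otimes(n-1)})\,g$ is $\ch_{n-1}\otimes\ch$-valued. The duality relation then yields
\[
\be[v_{t}(x)\,F]=\be\big[\big\langle DF,\ p_{t-\cdot}(x-\cdot)\,v_{\cdot}(\cdot)\,\1_{[0,t]}(\cdot)\big\rangle_{\ch}\big],
\]
and, expanding the $\ch$-inner product and interchanging the expectation with the (deterministic) integration against the covariance kernel of $W$, one reduces the right-hand side to a space--time integral of terms of the form $\be[\,I_{n-1}(g^{\otimes(n-1)})\,v_{s}(y)\,]$ with $s<t\,(<\ts_{p})$. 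Each such term vanishes by the induction hypothesis, hence $\be[v_{t}(x)\,F]=0$.

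Finally, the algebraic span of $\bigcup_{n\ge 0}\ch_{n}$ --- equivalently, the polynomials in the Gaussian family $\{W(\vp)\}$, recovered from the $I_{n}(g^{\otimes n})$ by polarization --- is dense in $L^{q}(\oom,\cf^{W},\bp)$ for every $q\in[1,\infty)$. Since $v_{t}(x)\in L^{p}(\oom)\cong(L^{q}(\oom))^{*}$ annihilates this dense subspace, we conclude $v_{t}(x)=0$ almost surely for every $(t,x)\in[0,\ts_{p})\times\R^{d}$, i.e. $u^{(1)}=u^{(2)}$, which is precisely the asserted uniqueness. The one genuinely delicate point in this scheme is the stochastic-Fubini interchange in the inductive step: its justification is where the uniform $L^{p}$-bound on $v$ built into the notion of $L^{p}$-mild solution (Definition \ref{def3}) and the hypercontractivity of the fixed chaos $\ch_{n-1}$ are really used; everything else is bookkeeping on the linear structure of the Skorohod mild equation.
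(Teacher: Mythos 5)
Your proposal is correct and rests on the same two pillars as the paper's proof: by linearity the difference $v=u^{(1)}-u^{(2)}$ satisfies the homogeneous mild equation, one shows $v_t(x)$ is orthogonal to every finite Wiener chaos, and then the $L^p$--$L^q$ duality together with the density of finite-chaos (polynomial) functionals in $L^q(\oom,\cf^W,\bp)$ forces $v_t(x)=0$; the final approximation step is literally the one used in the paper. Where you diverge is in how the orthogonality is obtained: the paper iterates the mild equation, writes $w_t(x)$ as an iterated Skorohod integral, and invokes the Nualart--Zakai results identifying iterated with multiple Skorohod integrals (extended to the $L^p$ setting) to conclude orthogonality to any finite chaos, whereas you run an induction on the chaos order, applying the duality relation of Definition \ref{def1} once per step with $F=I_n(g^{\otimes n})$, using $DF=nI_{n-1}(g^{\otimes(n-1)})g$ and a Fubini interchange to reduce to pairings $\be[v_s(y)I_{n-1}(g^{\otimes(n-1)})]$ at earlier times, which vanish by the induction hypothesis. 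Your route is more self-contained (no appeal to multiple Skorohod integrals), at the price of the Fubini justification you rightly single out: with $v$ only uniformly bounded in $L^p$ and the covariance of $W$ singular, the interchange should be carried out first for $g$ in a dense class of test functions with well-behaved Fourier data and then extended to all of $\ch_n$ via the equivalence of $L^q$-norms on a fixed chaos --- this is exactly the kind of detail the paper also leaves implicit, so it does not constitute a gap relative to the paper's own level of rigor.
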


\begin{proof}
Suppose that $u$ and $v$ are two mild solutions in the $L^{p}$ sense (given by Definition \ref{def3}) with the same initial condition. Set $w=u-v$. Then, $w$ satisfies
\begin{equation}\label{f1}
w_{t}(x)=   \int_0^t  \int_{\mathbb{R}^d}   
 p_{t-s}(x-y) w_{s}(y)  \, d^{\di}W_{s}(y).
\end{equation}
Iterating this relation we get 
\begin{equation}\label{f2}
 w_{t}(x)=   \int_0^t  \int_{\mathbb{R}^d} 
  \left(  \int_0^s  \int_{\mathbb{R}^d}  
 p_{t-s}(x-y)   
 p_{s-r}(y-z)w_{r}(z)   d^{\di} W_{r}(z) \right)  d^{\di}W_{s}(y).
\end{equation}
 That is,  $w_{t}(x)$ is an iterated Skorohod integral.  By Proposition 2.6  and Proposition 2.7   in    \cite{Nu-Za}  this iterated Skorohod integral coincides with a double Skorohod integral (notice that in the reference \cite{Nu-Za} these results are proved for Skorohod integrals in $L^2(\Omega)$, but they can be easily extended to the Skorohod integral in $L^p(\Omega)$ introduced in Definition \ref{def1}). Moreover, using \eqref{f1}, \eqref{f2} and iterated versions in higher order chaoses, one can prove that $w_{t}(x)$ is orthogonal to any element in a finite Wiener chaos. This does not imply immediately that $w_t(x)=0$, because $w_{t}(x) \in L^p(\Omega)$ for some $p>1$. However, given any random variable $F\in L^q(\Omega)$ where $\frac 1p +\frac 1q=1$, we can approximate $F$ in  $L^q$ by a sequence $F_n$ of elements on a finite Wiener chaos. Then, $\be [w_t(x)F] =\lim_{n\rightarrow \infty} \be [w_t(x) F_n]=0$, which implies $w_t(x)=0$.
 \end{proof}



 \subsection{Feynman-Kac representation}\label{sec:fk-representation-sko}
 
In order to analyze the moments of $u^\di_t(x)$ and compare the Skorohod and Stratonovich settings for equation \eqref{eq:she-intro}, we first give a Feynman-Kac representation for an approximating sequence of the solution $u^{\di}$ to equation \eqref{eq:she-sko}. To this end, let $p_\epsilon$ be a $d+1$ dimensional heat kernel given by \eqref{eq:heat-kernel}. To separate its first coordinate from the others, we write $p_\epsilon(\hat{x})=p_\epsilon(x_0,{x})$ with $\hat{x}=(x_0,{x})\in\R^{d+1}$. Consider now the smoothed noise 
\begin{align}\label{smoothed noise}
W^\epsilon_t(x)=\lastchange{(p_\epsilon*\dot{W})_t(x)}=W(p_\epsilon(t-\cdot,x-\cdot)).
\end{align}
In the same spirit as \eqref{eq:def-gamma}, we denote by $\gamma_0^\epsilon$ and $\gamma^\epsilon$ the corresponding covariance functions (in time and space) of $W^\epsilon$, with $\mu_0^\epsilon$ and $\mu^\epsilon$ their spectral measures. Similar to \eqref{eq:def-mu}, it is readily checked that 
\begin{equation}\label{eq:def-mu-epsilon}
\mu_{0}^{\ep}(d\la)= c_{0} \, e^{-\lastchange{\epsilon}|\lambda|^2}|\la|^{1-2H_{0}}d\lambda,
\quad\text{and}\quad
\mu^{\ep}(d\xi) = c_{\bh} e^{-\lastchange{\epsilon}|\xi|^2}\prod_{j=1}^{d} |\xi_{j}|^{1-2H_{j}}d\xi,
\end{equation}
With the above notions in hand, we set
\begin{equation}\label{eq:def-Vt-epsilon-2}
V_{t}^{\ep,B}(x) = \iot  W^\epsilon_{t-s}(B^x_s)ds=W\left(\phi\right),
\end{equation}
where the second notation $W(\phi)$ corresponds to the Wiener integral introduced in \eqref{eq:covariance-W-with-Fourier} and where the function $\phi$ is given by
\begin{align}\label{eq:phi}
\phi(s,y)=\int_0^tp_{\epsilon}(u-s,B_{t-u}^x-y)du.
\end{align}
Moreover, in relation \eqref{eq:phi} $B^{x}$ stands for a $d$-dimensional Brownian motion independent of $W$ with initial condition $x\in\R^{d}$. The functional \eqref{eq:def-Vt-epsilon-2} gives rise to an approximate solution of the Skorohod equation, as stated in the following proposition:

\begin{proposition}\label{approx solution}
For $\ep>0$, $t\in\R_{+}$ and $x\in\R^{2}$, let $V_{t}^{\ep,B}(x)$ be defined by \eqref{eq:def-Vt-epsilon-2} and set 
\lastchange{\begin{equation}\label{eq:def-beta-epsilon}
\beta_{t}^{\ep,B} 
\equiv 
\lastchange{\int_{[0,t]^{2}} \int_{\R^{d+1}} e^{-\ep \, (|\xi|^{2}+|\lambda|^2)} e^{\ii  \lp \xi  \cdot  (B_{s_{1}} - B_{s_{2}})+\lambda(s_2-s_1)  \rp}
 \, \mu_0(d\lambda) \mu(d\xi) \, ds_{1} ds_{2}} ,
\end{equation}}
where we write $B^{0}=B$ for notational sake, where $\mu^\epsilon$ is introduced in \eqref{eq:def-mu-epsilon} and where $\ga_{0}^\epsilon$ is the inverse Fourier transform of $\mu_{0}^{\ep}$.
We define $u^{\ep,\di}$ on $\R_{+}\times\R^{d}$ by:
\begin{equation}\label{eq:approx-FK-sol-sko}
u _{t}^{\ep,\di}(x)
=
\E\!\lc e^{V_{t}^{\ep,B}(x) - \frac12 \beta_{t}^{\ep,B} } \rc,
\end{equation}
where we recall that the expectation $\E$ has been introduced in Notation \ref{not:general}.
Then the process $u^{\ep,\di}$ is the unique solution to the following Skorohod type equation:
\begin{equation}\label{eq:heat-sko-approx}
\partial_{t} u_{t}^{\ep,\di}(x) = \frac12 \Delta u_{t}^{\ep,\di}(x) 
+ u_{t}^{\ep,\di}(x) \di \dot W^{\ep}(x), \quad t\in\R_{+}, x\in \R^{d},
\end{equation}
where $\dot W^{\ep}$ is the smoothed noise $\dot W^{\ep}= \dot W * p_{\ep}$ defined in \eqref{smoothed noise}.
\end{proposition}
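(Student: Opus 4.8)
The plan is to (a) rewrite $u^{\ep,\di}_{t}(x)$ through its Wiener chaos expansion, (b) verify that the kernels of that expansion solve the hierarchy attached to the mild (Duhamel) formulation of \eqref{eq:heat-sko-approx}, and (c) settle uniqueness. Here a solution of \eqref{eq:heat-sko-approx} is understood in the mild sense of Definition~\ref{def3} with $W$ replaced by the smoothed noise $W^{\ep}$, and we take the initial condition $u_{0}\equiv 1$, so that $p_{t}u_{0}\equiv 1$.

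Step (a): chaos expansion. The starting observation is the identity $\beta^{\ep,B}_{t}=\|\phi\|_{\ch}^{2}=\be\big[V_{t}^{\ep,B}(x)^{2}\big]$, where $\phi=\phi^{t,x,B,\ep}$ is the function in \eqref{eq:phi}, $\be[\cdot]$ is the expectation in $W$ (conditionally on $B$), and we used $V_{t}^{\ep,B}(x)=W(\phi)$ from \eqref{eq:def-Vt-epsilon-2}. This is a direct computation: as $\phi$ is an average of shifted $(d+1)$-dimensional heat kernels, \eqref{eq:covariance-W-with-Fourier} expresses $\|\phi\|_{\ch}^{2}$ in Fourier modes; integrating out the two time variables against $\gamma_{0}$ produces the spectral weight $e^{-\ep|\lambda|^{2}}\mu_{0}(d\lambda)$ of \eqref{eq:def-mu-epsilon}, and, using the symmetry of $\mu_{0}$ and $\mu$ together with the substitution $s_{i}=t-u_{i}$, one lands exactly on the right-hand side of \eqref{eq:def-beta-epsilon}. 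Since conditionally on $B$ the variable $V_{t}^{\ep,B}(x)$ is centered Gaussian, $e^{V_{t}^{\ep,B}(x)-\frac12\beta^{\ep,B}_{t}}$ equals the Wick exponential $\sum_{n\ge0}\frac1{n!}I_{n}(\phi^{\otimes n})$, and averaging over $B$ yields
\begin{equation*}
u^{\ep,\di}_{t}(x)=\sum_{n\ge 0}I_{n}\big(f_{n}^{t,x}\big),\qquad f_{n}^{t,x}:=\tfrac{1}{n!}\,\E\big[\phi^{\otimes n}\big]\in\ch^{\odot n}.
\end{equation*}
All interchanges of $\E$ with the series and the multiple integrals are legitimate because, for $\ep>0$, $\gamma_{0}^{\ep}$ and $\gamma^{\ep}$ are bounded, so $0\le\beta^{\ep,B}_{t}\le t^{2}\gamma_{0}^{\ep}(0)\gamma^{\ep}(0)$ (a finite deterministic bound) and hence $\|\phi\|_{\ch}^{2n}\le(Ct^{2})^{n}$; in particular $\|u^{\ep,\di}_{t}(x)\|_{L^{2}(\oom)}^{2}=\sum_{n}n!\|f_{n}^{t,x}\|_{\ch^{\otimes n}}^{2}\le\sum_{n}(Ct^{2})^{n}/n!<\infty$ locally uniformly in $(t,x)$, so \eqref{eq1} holds with $p=2$ and the Skorohod integrals below are well defined.

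Step (b): the mild equation. I would next check that $u^{\ep,\di}_{t}(x)=1+\int_{0}^{t}\int_{\R^{d}}p_{t-s}(x-y)\,u^{\ep,\di}_{s}(y)\,\di\,\dot W^{\ep}_{s}(y)\,dy\,ds$. Writing $\dot W^{\ep}_{s}(y)=W(p_{\ep}(s-\cdot,y-\cdot))$, using $F\di W(h)=\delta^{\di}(Fh)$ and a Fubini theorem for Skorohod integrals, the right-hand side becomes $1+\delta^{\di}(\Psi^{t,x})$ with $\Psi^{t,x}(r,z)=\int_{0}^{t}\int_{\R^{d}}p_{t-\sigma}(x-w)\,u^{\ep,\di}_{\sigma}(w)\,p_{\ep}(\sigma-r,w-z)\,dw\,d\sigma$. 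Plugging in the chaos expansion of $u^{\ep,\di}_{\sigma}(w)$ and using $\delta^{\di}(I_{m}(h_{m}))=I_{m+1}(\widetilde{h_{m}})$ (symmetrization over all $m+1$ arguments), this reduces to $f_{0}^{t,x}=1$ and, for $n\ge1$, to $f_{n}^{t,x}=\widetilde{H^{t,x}_{n-1}}$, where
\begin{equation*}
H^{t,x}_{n-1}(\zeta_{1},\dots,\zeta_{n-1};r_{n},z_{n})=\int_{0}^{t}\int_{\R^{d}}p_{t-\sigma}(x-w)\,f_{n-1}^{\sigma,w}(\zeta_{1},\dots,\zeta_{n-1})\,p_{\ep}(\sigma-r_{n},w-z_{n})\,dw\,d\sigma
\end{equation*}
and $\zeta_{i}=(r_{i},z_{i})$. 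The crux is the Markov property of Brownian motion: since $p_{t-\sigma}(x-w)$ is the transition density from $x$ to $w$ in time $t-\sigma$, the convolution above reconstitutes the expectation over a single Brownian path $B$ started at $x$, with the auxiliary Brownian motion inside $f_{n-1}^{\sigma,w}$ (run from $w$ for time $\sigma$) becoming $s\mapsto B_{(t-\sigma)+s}$, so that its value at time $\sigma-u$ becomes $B_{t-u}$, and $w$ becomes $B_{t-\sigma}$. After the substitution $u_{n}=\sigma$ this gives
\begin{equation*}
H^{t,x}_{n-1}(\zeta_{1},\dots,\zeta_{n-1};r_{n},z_{n})=\frac1{(n-1)!}\,\E\Big[\int_{\{u_{i}\le u_{n}\,\forall i<n\}}\prod_{i=1}^{n}p_{\ep}(u_{i}-r_{i},B_{t-u_{i}}-z_{i})\,du_{1}\cdots du_{n}\Big],
\end{equation*}
with $B$ started at $x$. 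Symmetrizing the kernel over the $n$ variables $\zeta_{1},\dots,\zeta_{n-1},(r_{n},z_{n})$ amounts to letting each time variable play the role of the maximal one, each such configuration arising from $(n-1)!$ permutations; since $\sum_{k}\1_{\{u_{k}=\max\}}=1$ a.e., the prefactor $\frac1{(n-1)!}$ becomes $\frac1{n!}$ and the simplex becomes $[0,t]^{n}$, whence $\widetilde{H^{t,x}_{n-1}}=\frac1{n!}\E[\phi^{\otimes n}]=f_{n}^{t,x}$. This proves existence.

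Step (c): uniqueness and the main difficulty. For uniqueness I would proceed as in the proof of Proposition~\ref{thm: uniqueness-extended Skorohod}: the difference $w$ of two solutions satisfying \eqref{eq1} obeys the homogeneous equation $w_{t}(x)=\delta^{\di}(\Psi^{t,x,w})$ with $\Psi^{t,x,w}(r,z)=\int_{0}^{t}\int_{\R^{d}}p_{t-\sigma}(x-w')\,w_{\sigma}(w')\,p_{\ep}(\sigma-r,w'-z)\,dw'\,d\sigma$; iterating $N$ times exhibits $w_{t}(x)$ as an $N$-fold iterated Skorohod integral, hence orthogonal in $L^{2}(\oom)$ to every Wiener chaos of order $<N$, and letting $N\to\infty$ forces $w_{t}(x)=0$. (Equivalently, the recursion of Step (b) determines all chaos kernels of any $L^{2}$-solution.) The main obstacle is Step (b): tracking the combinatorial constants---the $1/n!$ from the Wick exponential, the $1/m!$ in the chaos kernels of $u^{\ep,\di}_{\sigma}$, the symmetrization hidden in $\delta^{\di}$, and the Markov ``peeling'' of one time variable---is what the whole argument hinges on, whereas each measure-theoretic interchange (Fubini between $B$, $W$, time and Fourier integrals, and Fubini for the Skorohod integral) is routine since $\ep>0$ makes all integrands bounded with Gaussian tails.
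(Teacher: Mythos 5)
Your proposal is correct and takes essentially the same route as the paper: the paper performs the same covariance computation identifying $\be\big[|V_{t}^{\ep,B}(x)|^{2}\big]$ with $\beta_{t}^{\ep,B}$ and then delegates the fact that the Wick-corrected Feynman--Kac functional solves the smoothed Skorohod equation to \cite[Proposition 5.2]{HN}, whose proof is precisely the Wiener-chaos expansion and kernel-recursion argument (including the Markov-property peeling and the symmetrization count) that you write out in Steps (a)--(c), with uniqueness handled as in Proposition \ref{thm: uniqueness-extended Skorohod}. No gaps to report.
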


\begin{proof}

Similarly to the proof of Proposition 5.2 in \cite{HN}, we can show that 
\begin{equation*}
u _{t}^{\ep,\di}(x) = \E\lc
\exp\left(  V_{t}^{\ep,B}(x) - \frac12 \be\left[\lln V_{t}^{\ep,B}(x)\rrn^{2}\right]\right)
\rc.
\end{equation*}
Now a direct application of \eqref{eq:covariance-W-with-Fourier} reveals that
\begin{align}\label{a01}
\be\!\lc |V_{t}^{\ep,B}(x)|^{2} \rc \notag 
&=
\lastchange{\int_{[0,t]^{2}} \int_{\R^{d+1}} e^{-\ep \, (|\xi|^{2}+|\lambda|^2)} e^{\ii  \lp \xi  \cdot  (B_{s_{1}} - B_{s_{2}})+\lambda(s_2-s_1)  \rp}
 \, \mu_0(d\lambda) \mu(d\xi) \, ds_{1} ds_{2}} \notag \\
&=
\beta_{t}^{\ep,B},
\end{align}
which proves our claim.
\end{proof}

We now prove the convergence of the regularized Feynman-Kac representation to the solution of the Skorohod equation \eqref{eq:she-sko}.

\begin{proposition}\label{prop:cvgce-FK-Skorohod}
Let $u^{\ep,\di}$ be the process defined by \eqref{eq:approx-FK-sol-sko}, and recall that $u^{\di}$ designates the solution to equation \eqref{eq:she-sko} as given in Proposition \ref{prop:existence-sko-subcritical} \lastchange{or Proposition~\ref{prop:local-existence-uniqueness-sko}}. We  assume that one of the following situations is met:

\noindent
\emph{(i)}
The subcritical condition \eqref{eq:sko-subcritical-regime} is satisfied and $t$ is any positive number.

\noindent
\emph{(ii)}
The critical assumption \eqref{eq:sko-critical-regime} prevails and  $t\in[0,\ts)$, where $\ts$ is defined in Proposition~\ref{prop:local-existence-uniqueness-sko}.

\noindent
Then for all $x\in\R^{d}$, the random variable $u^{\ep,\di}_{t}(x)$ converges to $u^{\di}_{t}(x)$ in $L^{2}(\oom)$ as $\ep\to 0$.
\end{proposition}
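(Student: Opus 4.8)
The plan is to carry out the passage to the limit at the level of the Wiener chaos expansions of the two random variables. Both $u^\di_t(x)$ and $u^{\ep,\di}_t(x)$ belong to $L^2(\oom)$ — the former by Proposition~\ref{prop:existence-sko-subcritical} in case~(i) and Proposition~\ref{prop:local-existence-uniqueness-sko} in case~(ii) (where $t<\ts$), the latter by Proposition~\ref{approx solution} — so each admits a chaos decomposition of the form \[ u^\di_t(x)=\sum_{n\ge0}I_n\big(f_n\big),\qquad u^{\ep,\di}_t(x)=\sum_{n\ge0}I_n\big(g_n^\ep\big), \] where, by iterating the mild formulations~\eqref{eq:mild-sko} and the mild form of~\eqref{eq:heat-sko-approx}, the kernel $f_n=f_n(\cdot\,;t,x)\in\ch^{\odot n}$ is the symmetrization in the $n$ pairs of space--time variables of $p_{t-s_n}(x-y_n)\,p_{s_n-s_{n-1}}(y_n-y_{n-1})\cdots p_{s_2-s_1}(y_2-y_1)\,(p_{s_1}u_0)(y_1)$ restricted to $\{0<s_1<\cdots<s_n<t\}$, and $g_n^\ep=p_\ep^{\otimes n}*f_n$ is the same kernel with a mollifier $p_\ep$ convolved into each of the $n$ legs carrying the noise — this last point being a consequence of the identity $\dot W^\ep(\psi)=W(p_\ep*\psi)$ coming from~\eqref{smoothed noise}.

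By orthogonality of the Wiener chaoses, \[ \big\|u^{\ep,\di}_t(x)-u^\di_t(x)\big\|_{L^2(\oom)}^2=\sum_{n\ge0}n!\,\big\|g_n^\ep-f_n\big\|_{\ch^{\otimes n}}^2 , \] so it suffices to establish (a) that each summand tends to $0$ as $\ep\to0$, and (b) a bound on the summands that is uniform in $\ep$ and summable in $n$. For~(b) I would use the elementary spectral domination $\mu_0^\ep(d\la)\le\mu_0(d\la)$ and $\mu^\ep(d\xi)\le\mu(d\xi)$, which follows from the factors $e^{-\ep|\la|^2},e^{-\ep|\xi|^2}\le1$ in~\eqref{eq:def-mu-epsilon}. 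Writing the $\ch^{\otimes n}$ inner product in Fourier variables through~\eqref{eq:covariance-W-with-Fourier}, convolution by $p_\ep^{\otimes n}$ amounts to inserting the factor $\prod_{k=1}^n e^{-\ep(|\la_k|^2+|\xi_k|^2)/2}\le1$ in the integrand, so that $\|g_n^\ep\|_{\ch^{\otimes n}}\le\|f_n\|_{\ch^{\otimes n}}$ and hence $n!\,\|g_n^\ep-f_n\|_{\ch^{\otimes n}}^2\le 4\,n!\,\|f_n\|_{\ch^{\otimes n}}^2$. Since $\sum_{n\ge0}n!\,\|f_n\|_{\ch^{\otimes n}}^2=\|u^\di_t(x)\|_{L^2(\oom)}^2<\infty$ under either hypothesis, this gives the required uniform summable domination.

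For~(a), fix $n$. In Fourier variables $\mathcal F g_n^\ep=\big(\prod_{k=1}^n e^{-\ep(|\la_k|^2+|\xi_k|^2)/2}\big)\,\mathcal F f_n$ converges pointwise to $\mathcal F f_n$ as $\ep\downarrow0$, with $|\mathcal F g_n^\ep-\mathcal F f_n|\le2\,|\mathcal F f_n|$, and $|\mathcal F f_n|^2$ is integrable against $(\mu_0\otimes\mu)^{\otimes n}$ precisely because $\|f_n\|_{\ch^{\otimes n}}<\infty$; dominated convergence then yields $\|g_n^\ep-f_n\|_{\ch^{\otimes n}}\to0$. Combining (a) and (b) by the standard $\ep$--$N$ argument — given $\delta>0$, choose $N$ with $\sum_{n>N}4\,n!\,\|f_n\|_{\ch^{\otimes n}}^2<\delta$, then let $\ep\to0$ in the finite sum $\sum_{n\le N}n!\,\|g_n^\ep-f_n\|_{\ch^{\otimes n}}^2$ — one obtains $\limsup_{\ep\to0}\|u^{\ep,\di}_t(x)-u^\di_t(x)\|_{L^2(\oom)}^2\le\delta$ for every $\delta>0$, which is the assertion.

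The step I expect to be the only genuinely non-routine point is the input that the chaos series of $u^\di_t(x)$ converges in $L^2(\oom)$, i.e.\ that $\sum_n n!\,\|f_n\|_{\ch^{\otimes n}}^2<\infty$, which in the critical regime~(ii) holds for $t<\ts$ and rests on the sharp intersection-local-time estimates of~\cite{Ch18} rather than on soft functional analysis; everything else reduces to the spectral domination $\mu^\ep\le\mu$, $\mu_0^\ep\le\mu_0$ together with dominated convergence. Should one wish to bypass the explicit chaos expansion altogether, an equivalent route is to truncate both Picard iterations at level $N$, bound the two tails uniformly in $\ep$ by the very same spectral domination, and then pass to the limit in the finitely many mollified iterated integrals; this variant uses exactly the same ingredients.
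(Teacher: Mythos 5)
Your argument is correct, and it reaches the conclusion by a genuinely different route from the paper. The paper never touches the chaos kernels explicitly: it computes the mixed second moments $\be\big[u^{\ep_1,\di}_t(x)\,u^{\ep_2,\di}_t(x)\big]$ through the Feynman--Kac representation \eqref{eq:approx-FK-sol-sko}, shows via \cite[inequality (3.1)]{Ch18}, positivity of the termwise Brownian expectations and monotone convergence that they converge to $\E[e^{\al_t}]$, deduces that $\{u^{\ep,\di}_t(x)\}$ is Cauchy in $L^2(\oom)$, and then must \emph{identify} the limit with $u^\di_t(x)$ by passing to the limit in the duality relation \eqref{a11} (plus uniqueness). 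You instead compare $u^{\ep,\di}_t(x)$ directly with $u^\di_t(x)$ chaos by chaos, using $g_n^\ep=p_\ep^{\otimes n}\ast f_n$, the spectral domination $\mu_0^\ep\le\mu_0$, $\mu^\ep\le\mu$, and dominated convergence; your identification step \eqref{a11} is thereby rendered unnecessary, at the price of invoking the (standard, and implicit in \cite{Ch18} and in Proposition \ref{approx solution} via the Wick exponential) identification of the kernels of both $u^\di$ and $u^{\ep,\di}$. The two proofs rest on the same hard input: your summability $\sum_n n!\,\|f_n\|_{\ch^{\otimes n}}^2=\|u^\di_t(x)\|_{L^2(\oom)}^2<\infty$ is exactly the statement $\E[e^{\al_t}]<\infty$ (for $t<\ts$ in case (ii)) that the paper borrows from \cite{Ch18}, and your term-by-term domination is the chaos-level counterpart of the paper's Taylor-expansion-plus-monotone-convergence step. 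Two cosmetic remarks: the constant $4$ is not needed, since $|1-\prod_k e^{-\ep(|\la_k|^2+|\xi_k|^2)/2}|\le 1$ already gives $\|g_n^\ep-f_n\|_{\ch^{\otimes n}}\le\|f_n\|_{\ch^{\otimes n}}$; and the time-mollification makes the kernels spill slightly outside $\R_+$ in the time variable, a harmless point (the covariance extends to $\R\times\R^d$) that the paper's own regularization shares.
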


\begin{proof}
We will prove the proposition under condition (ii), the other case being handled similarly. In order to show the $L^{2}$-convergence of $u_{t}^{\di,\ep}$,
we consider the quantity $\al_{t}$ (also denoted by $\al_{t}^{12}$ in the sequel) defined by
\lastchange{\begin{eqnarray}\label{eq:def-alpha-t}
\al_{t}
&\equiv& 
\int_{[0,t]^{2}} \int_{\R^{d}} e^{\ii  \lp \xi  \cdot  (B_{s_{1}}^{1} - B_{s_{2}}^{2})+\lambda(s_1-s_2)  \rp}\mu_0(d\la)\mu(d\xi) \, ds_{1} ds_{2} 
\end{eqnarray}}
{In the above, $B^1, B^2$ are two independent Brownian motions starting from $x$, which are also independent of the noise $W$.} Then in order to prove that $u^{\ep,\di}_{t}(x)$ converges in $L^{2}(\oom)$, it is sufficient to verify that
\begin{equation}\label{a10}
\lastchange{\E[e^{\alpha_t}]<\infty \quad \text{and}} \quad \lim_{\ep_{1},\ep_{2}\to 0} \be\left[
u _{t}^{\ep_{1},\di}(x) \, u _{t}^{\ep_{2},\di}(x)
\right]
= \lastchange{\E}\left[e^{\al_{t}} \right].
\end{equation}
In order to prove \eqref{a10}, we resort to expression \eqref{eq:approx-FK-sol-sko}, which yields
\begin{multline*}
\be\left[
u _{t}^{\ep_{1},\di}(x) \, u _{t}^{\ep_{2},\di}(x)
\right] \\
=
\be\!\lc 
\E\!\lc
\exp\lp V_{t}^{\ep_{1},B^{1}}(x) + V_{t}^{\ep_{2},B^{2}}(x) 
-\frac12\lp \beta_{t}^{\ep_{1},B^{1}} + \beta_{t}^{\ep_{2},B^{2}} \rp
\rp
\rc
\rc.
\end{multline*}
Applying Fubini's theorem we thus get:
\begin{multline}\label{a02}
\be\left[
u _{t}^{\ep_{1},\di}(x) \, u _{t}^{\ep_{2},\di}(x)
\right]  
=
\E\!\lc
\exp\lp \frac12\be\!\lc \lp V_{t}^{\ep_{1},B^{1}}(x) + V_{t}^{\ep_{2},B^{2}}(x) \rp^{2}  \rc
-\frac12\lp \beta_{t}^{\ep_{1},B^{1}} + \beta_{t}^{\ep_{2},B^{2}} \rp
\rp
\rc .
\end{multline}
Moreover, recall that $\beta_{t}^{\ep,B}=\be[ |V_{t}^{\ep,B}(x)|^{2} ]$ according to \eqref{a01}. Plugging this identity into~\eqref{a02}, we thus get
\begin{equation*} 
\be\left[
u _{t}^{\ep_{1},\di}(x) \, u _{t}^{\ep_{2},\di}(x)
\right]
=
\E\lc e^{\al_{t}^{\ep_{1},\ep_{2}}} \rc,
\end{equation*}
where the quantity $\al_{t}^{\ep_{1},\ep_{2}}$ is defined by
\begin{align}\label{a101}
&\al_{t}^{\ep_{1},\ep_{2}}
=
\be\!\lc V_{t}^{\ep_{1},B^{1}}\!(x) \, V_{t}^{\ep_{2},B^{2}}\!(x) \rc  \\
&=
\lastchange{\int_{[0,t]^{2}} \int_{\R^{d+1}} e^{-\frac{(\ep_{1}+\ep_{2})}{2} \, (|\xi|^{2}+|\lambda|^2)} e^{\ii  \lp \xi  \cdot  (B_{s_{1}}^{1} - B_{s_{2}}^{2})+\lambda(s_2-s_1)  \rp}
 \, \mu_0(d\lambda) \mu(d\xi) \, ds_{1} ds_{2}.} \notag
\end{align}
Notice that for fixed $\ep_{1},\ep_{2}>0$, the fact that $\al_{t}^{\ep_{1},\ep_{2}}$ is well-defined stems easily from the presence of the exponential term $e^{-(\ep_{1}+\ep_{2}) \, |\xi|^{2}}$ in the right hand side of \eqref{a101}.
In addition, \cite[inequality (3.1)]{Ch18} implies that $$\E[e^{\alpha_t}]<\infty,$$
for $t< t^*$. 
{Since 
$$\int_{[0,t]^2}\E\left[e^{\ii \lp \xi\cdot (B_{s_{1}}^{1} - B_{s_{2}}^{2})+\lambda(s_1-s_2)\rp }\right]ds_1ds_2
=
\left|\int_0^t\E\left[e^{\ii \lp \xi\cdot B_{s_1}^{1}+\lambda s_1\rp }\right]ds_1\right|^2\ge 0,
$$}an easy monotone convergence argument together with a Taylor series expansion of the exponential function yields
\begin{equation*}
\lim_{\ep_{1},\ep_{2}\to 0} \be\left[
u _{t}^{\ep_{1},\di}(x) \, u _{t}^{\ep_{2},\di}(x)
\right]
= 
\lim_{\ep_{1},\ep_{2}\to 0} \E\lc e^{\al_{t}^{\ep_{1},\ep_{2}}} \rc
=
\E\left[e^{\al_{t}} \right],
\end{equation*}
which is our claim \eqref{a10}. We have thus obtained
the $L^{2}$ convergence of $u^{\ep,\di}$.

We now claim that the process \lastchange{$u^\di_t(x)$} defined as the limit of $u^{\ep,\di}_t(x)$ is a mild solution to
 equation   \eqref{eq:she-sko} in the sense of Definition \ref{def3} with $p=2$.
First notice that  (\ref{eq1}) holds for the process $u^{\di}$ because of the relation
\[
\sup_{\ep >0} \sup_{t\le \ts}\sup_{x\in \R^2}  \| u^{\ep,\di}_t(x)\|_{L^2(\Omega)} <\infty,
\]
which stems from \eqref{a10}. Then owing to relation \eqref{dual}, $u^{\di}$ satisfies \eqref{eq:mild-sko} in the Skorohod sense if for any smooth and cylindrical random variable $F$, we have
\begin{equation}\label{a11}
\be \lc F \lp u^\di_t(x) - p_tu_0(x)\rp \rc
= \be \lc \lla DF, G \rra_{\ch} \rc  ,
\end{equation}
where the process $G$ is defined by
\begin{equation*}
G_{s}(y) = p_{t-s}(x-y) \lastchange{u^\di_s}(y) \1_{[0,t]}(s).
\end{equation*}
Relation \eqref{a11} is obtained by taking limits on a similar equation for $u^{\ep,\di}_t(x)$  (see~\cite[Theorem 3.6]{HHNT} for a similar argument).
\end{proof}

\subsection{Moments estimates in the subcritical case}
In this section we recall some moment estimates and representations established in \cite{Ch18} for the Skorohod equation. We label those results here since they will be invoked in order to get integrability results for the Stratonovich equation. We start with a bound on the $L^{p}$ moments.

\begin{proposition}\label{prop:finite-moments-subcritical}
Assume that the subcritical condition \eqref{eq:sko-subcritical-regime} is satisfied, and let $u^{\di}$ be the unique solution of equation \eqref{eq:she-sko}. Then for all $t\ge 0$ and $p\ge 1$ we have
\begin{equation*}
\be\lc |u^\di_{t}(x)|^{p}  \rc = c_{p,t} < \infty.
\end{equation*}
\end{proposition}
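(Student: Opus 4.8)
The plan is to derive moment bounds from the regularized Feynman–Kac representation established in Propositions \ref{approx solution} and \ref{prop:cvgce-FK-Skorohod}, combined with a standard moment computation for exponentials of Wiener integrals. First I would start from the observation that by Proposition \ref{prop:cvgce-FK-Skorohod}(i), under the subcritical condition \eqref{eq:sko-subcritical-regime} the approximations $u^{\ep,\di}_{t}(x)$ converge to $u^{\di}_{t}(x)$ in $L^{2}(\oom)$ for every $t\ge 0$, and $u^{\ep,\di}_{t}(x)=\E[e^{V^{\ep,B}_{t}(x)-\frac12\beta^{\ep,B}_{t}}]$ with $\beta^{\ep,B}_{t}=\be[|V^{\ep,B}_{t}(x)|^{2}]$. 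For an integer $p\ge 1$, expanding the $p$-th power, applying Fubini between $\E$ (over $p$ independent Brownian copies $B^{1},\dots,B^{p}$) and $\be$ (over $W$), and using the Gaussianity of $V^{\ep,B^{i}}_{t}(x)$ exactly as in the proof of Proposition \ref{prop:cvgce-FK-Skorohod}, one gets
\begin{equation*}
\be\big[(u^{\ep,\di}_{t}(x))^{p}\big]
=
\E\Big[\exp\Big(\sum_{1\le i<j\le p}\al^{\ep,ij}_{t}\Big)\Big],
\end{equation*}
where $\al^{\ep,ij}_{t}$ is the mixed covariance $\be[V^{\ep,B^{i}}_{t}(x)V^{\ep,B^{j}}_{t}(x)]$, i.e.\ the regularized version of $\al^{12}_{t}$ from \eqref{eq:def-alpha-t} between the pair $(B^{i},B^{j})$.

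Next I would bound the right-hand side uniformly in $\ep$. Since each $\al^{\ep,ij}_{t}\le \al^{ij}_{t}$ in the sense that removing the Gaussian cutoffs $e^{-\ep(|\xi|^{2}+|\la|^{2})}$ only increases a nonnegative quantity — here I use that the relevant Fourier integrals are nonnegative, exactly the elementary remark $\int_{[0,t]^{2}}\E[e^{\ii(\xi\cdot(B^{i}_{s_{1}}-B^{j}_{s_{2}})+\la(s_{1}-s_{2}))}]\,ds_{1}ds_{2}=|\int_{0}^{t}\E[e^{\ii(\xi\cdot B_{s}+\la s)}]\,ds|^{2}\ge 0$ used in Proposition \ref{prop:cvgce-FK-Skorohod} — it suffices to show $\E[\exp(\sum_{i<j}\al^{ij}_{t})]<\infty$. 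This is precisely the type of exponential integrability of Brownian intersection functionals that the subcritical regime \eqref{eq:sko-subcritical-regime} was designed to guarantee; it is available from \cite{Ch18} (the same inequality \cite[inequality (3.1)]{Ch18} invoked in the proof of Proposition \ref{prop:cvgce-FK-Skorohod}, applied now to $p$ Brownian motions rather than two). With this uniform bound in hand, a monotone convergence argument as $\ep\to 0$, together with the already-established $L^{2}$ (hence in-probability, along a subsequence a.s.) convergence $u^{\ep,\di}_{t}(x)\to u^{\di}_{t}(x)$ and Fatou's lemma, yields $\be[|u^{\di}_{t}(x)|^{p}]\le \E[\exp(\sum_{i<j}\al^{ij}_{t})]=:c_{p,t}<\infty$ for every even integer $p$; the case of general real $p\ge 1$ follows by interpolation (monotonicity of $L^{p}$ norms on the probability space $\oom$). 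For odd integers or non-integer $p$ one simply dominates $|u^{\di}_{t}(x)|^{p}$ by $1+|u^{\di}_{t}(x)|^{\lceil p\rceil}$ and reduces to an even moment.

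The main obstacle is the uniform-in-$\ep$ exponential integrability of the multi-Brownian intersection functional $\sum_{i<j}\al^{ij}_{t}$, i.e.\ verifying that the subcritical condition \eqref{eq:sko-subcritical-regime} really does control the $p$-fold interaction and not merely the pairwise one for $p=2$. Here I would lean on the scaling and variational estimates of \cite{Ch18}: the key point is that \eqref{eq:sko-subcritical-regime} forces the exponent governing the small-scale singularity of $\ga_{0}\otimes\ga$ to sit strictly below the threshold at which the exponential moment of the Brownian functional $\int_{[0,t]^{2}}\ga_{0}(s_{1}-s_{2})\ga(B_{s_{1}}-B_{s_{2}})\,ds_{1}ds_{2}$ diverges, and this threshold does not depend on the number $p$ of independent copies (each new copy contributes a cross term of the same scaling type, and finitely many such terms preserve finiteness of the exponential moment by Hölder's inequality in $\E$). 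Thus, modulo citing the precise statement from \cite{Ch18}, the bound $c_{p,t}<\infty$ is uniform on compact time intervals, which is all that is needed. Everything else — the Fubini manipulations, the Gaussian exponential identity, and the passage to the limit — is routine.
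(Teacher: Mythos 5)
Your proposal is essentially the intended argument: the paper itself states this proposition without proof (it is recalled from \cite{Ch18}), but its content is exactly the combination of the regularized Feynman--Kac machinery of Propositions \ref{approx solution}--\ref{prop:cvgce-FK-Skorohod} with the moment identity and pairwise exponential integrability recorded in Proposition \ref{prop:FK-representation}, which is the route you follow: represent $\be[(u^{\ep,\di}_t(x))^p]$ as $\E[\exp(\sum_{i<j}\al^{\ep,ij}_t)]$, bound this uniformly in $\ep$, invoke the exponential integrability of the pairwise functional under \eqref{eq:sko-subcritical-regime} (inequality (3.1) of \cite{Ch18}), apply H\"older over the finitely many pairs, and pass to the limit by Fatou. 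One step should be rephrased: the claim ``$\al^{\ep,ij}_t\le \al^{ij}_t$'' is not a pointwise inequality between random variables (when some $H_j<\tfrac12$ the covariance $\ga$ is a distribution, not a nonnegative function, so no pathwise comparison is available); the correct mechanism, which your parenthetical already gestures at, is the moment-level domination $\mu_0^\ep\otimes\mu^\ep\le\mu_0\otimes\mu$ combined with the fact that each moment of a \emph{pairwise} functional is an integral of a modulus-squared Fourier quantity, exactly as in \eqref{a101}--\eqref{a10} and \eqref{domination uniform in n}. For $p>2$ the nonnegativity of the mixed moments of the full sum $\sum_{i<j}\al^{ij}_t$ is not immediate, so you should apply H\"older \emph{first}, i.e.
\begin{equation*}
\E\Big[\exp\Big(\sum_{1\le i<j\le p}\al^{\ep,ij}_t\Big)\Big]
\le \prod_{1\le i<j\le p}\E\Big[\exp\Big(\tbinom{p}{2}\,\al^{\ep,ij}_t\Big)\Big]^{1/\binom{p}{2}},
\end{equation*}
and only then use the two-motion modulus-squared representation to bound each factor, uniformly in $\ep$, by $\E[\exp(\binom{p}{2}\al^{12}_t)]<\infty$. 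With that reordering your argument is complete and consistent with the paper's framework.
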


Let us now define a generalization of the quantity $\al_{t}$ defined by \eqref{eq:def-alpha-t}, which is used for moment representations. Namely for two coordinates $1\le j_{1},j_{2} \le \lastchange{p}$ we set
\begin{equation}\label{eq:def-alpha-j1j2}
\al_{t}^{j_{1}j_{2}}
=
\int_{[0,t]^{2}} \ga\lp  B_{s_{1}}^{j_{1}} - B_{s_{2}}^{j_{2}} \rp \ga_{0}(s_{1}-s_{2})  \, ds_{1} ds_{2},
\end{equation}
where $\gamma$ and $\gamma_0$ are the (generalized) covariance function given by \eqref{eq:def-gamma}, {and $B^j, j=1,\dots,p$ are independent Brownian motions starting from $x$}.
With this notation in hand and thanks to the representation \eqref{eq:approx-FK-sol-sko} for the approximation $u^{\ep,\di}$ of $u^{\di}$, we also get the following representation for the moments of $u_{t}^{\di}(x)$.

\begin{proposition}\label{prop:FK-representation}
We suppose that the subcritical condition \eqref{eq:sko-subcritical-regime} is fulfilled, and recall that $u^{\di}$ is the unique solution of equation \eqref{eq:she-sko}. Then for all $t\ge 0$ and any integer $p\ge 2$ we have
\begin{align}\label{eq: FK moments-sko}
\be\lc |u^\di_{t}(x)|^{p}  \rc = 
\E\lc \exp\lp \sum_{1\le j_{1}<j_{2}\le p }  \al_{t}^{j_{1}j_{2}} \rp  \rc,
\end{align}
where $\alpha^{j_1,j_2}_t$ is introduced in \eqref{eq:def-alpha-j1j2}.
{Moreover,  for any $\lambda>0$ one has
$$\E\lc \exp\lp \lambda  \al_{t}^{j_{1}j_{2}} \rp  \rc<\infty.$$
}
\end{proposition}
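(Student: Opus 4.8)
The plan is to prove \eqref{eq: FK moments-sko} by taking the limit $\ep \to 0$ in an exact moment formula for the regularized solution $u^{\ep,\di}$, exactly as in the proof of Proposition~\ref{prop:cvgce-FK-Skorohod} but now with $p$ copies of the Brownian motion instead of two. First I would write, using the Feynman-Kac representation \eqref{eq:approx-FK-sol-sko} and the fact that $V_t^{\ep,B}(x)$ is a centered Gaussian random variable (conditionally on $B$) with variance $\beta_t^{\ep,B}$,
\begin{equation*}
\be\lc \big(u_t^{\ep,\di}(x)\big)^p \rc
=
\E\lc \exp\lp \tfrac12 \be\Big[\Big(\textstyle\sum_{j=1}^p V_t^{\ep,B^j}(x)\Big)^2\Big] - \tfrac12 \sum_{j=1}^p \beta_t^{\ep,B^j} \rp \rc
=
\E\lc \exp\lp \sum_{1\le j_1<j_2\le p} \al_t^{\ep,j_1j_2} \rp \rc,
\end{equation*}
where $B^1,\dots,B^p$ are independent $d$-dimensional Brownian motions started at $x$ and independent of $W$, and $\al_t^{\ep,j_1j_2}$ is the obvious $\ep$-regularized version of \eqref{eq:def-alpha-j1j2} written in Fourier modes, namely $\al_t^{\ep,j_1j_2}=\int_{[0,t]^2}\int_{\R^{d+1}} e^{-\ep(|\xi|^2+|\la|^2)} e^{\ii(\xi\cdot(B_{s_1}^{j_1}-B_{s_2}^{j_2})+\la(s_1-s_2))}\,\mu_0(d\la)\mu(d\xi)\,ds_1ds_2$. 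The diagonal terms $\beta_t^{\ep,B^j}$ cancel the $j_1=j_2$ contributions, leaving only the cross terms, precisely as in \eqref{a02}.

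\textbf{Passing to the limit.} Next I would send $\ep\to0$. Two facts are needed. The left-hand side converges: since $u_t^{\ep,\di}(x)\to u_t^{\di}(x)$ in $L^2(\oom)$ (Proposition~\ref{prop:cvgce-FK-Skorohod}(i)) and, by Proposition~\ref{prop:finite-moments-subcritical}, $u^{\di}$ has moments of all orders, one upgrades this to $L^p$-convergence for every $p$; the uniform bound $\sup_{\ep>0}\|u_t^{\ep,\di}(x)\|_{L^{2p}(\oom)}<\infty$ (obtained by the same computation applied with $2p$ Brownian copies, which is finite in the subcritical regime) gives uniform integrability and hence $\be[(u_t^{\ep,\di}(x))^p]\to\be[|u_t^{\di}(x)|^p]$. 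For the right-hand side, observe that each partial-sum exponent is nonnegative in expectation: for fixed $\xi,\la$,
\begin{equation*}
\int_{[0,t]^2}\E\!\lc e^{\ii(\xi\cdot(B_{s_1}^{j_1}-B_{s_2}^{j_2})+\la(s_1-s_2))}\rc ds_1ds_2
=
\Big|\int_0^t \E\!\lc e^{\ii(\xi\cdot B_{s_1}+\la s_1)}\rc ds_1\Big|^2\ge 0,
\end{equation*}
by independence of $B^{j_1},B^{j_2}$, so that after expanding $\exp(\sum_{j_1<j_2}\al_t^{\ep,j_1j_2})$ in its Taylor series and applying Fubini one gets a series with nonnegative terms that increase monotonically as $\ep\downarrow0$ (the Gaussian factors $e^{-\ep(|\xi|^2+|\la|^2)}$ increase to $1$). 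Monotone convergence then yields $\E[\exp(\sum_{j_1<j_2}\al_t^{\ep,j_1j_2})]\to \E[\exp(\sum_{j_1<j_2}\al_t^{j_1j_2})]$, and combining with the left-hand side convergence gives \eqref{eq: FK moments-sko}, the finiteness of the right-hand side being automatic from that of the left.

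\textbf{The single-pair exponential moment.} For the last claim, $\E[\exp(\la\,\al_t^{j_1j_2})]<\infty$ for all $\la>0$, I would note that $\al_t^{j_1j_2}$ has the same law as $\al_t^{12}$ for the two independent Brownian motions $B^1,B^2$, and that by the scaling/translation structure of $\ga,\ga_0$ the random variable $\la\,\al_t^{12}$ can be absorbed into a rescaling of time (or of the Brownian motions), reducing the estimate to the case $\la=1$ on a possibly larger time interval; alternatively, one applies the $p=2$ instance of \eqref{eq: FK moments-sko} together with an elementary bound $\sum_{1\le j_1<j_2\le p}\al_t^{j_1j_2}\ge \al_t^{12}$ (each summand being a.s.\ nonnegative, since $\ga,\ga_0$ are positive-definite) to get $\E[\exp(\al_t^{12})]\le \be[|u_t^{\di}(x)|^2]<\infty$, and then upgrades to arbitrary $\la>0$ by the same time-rescaling argument. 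The main obstacle is the interchange of limit and expectation on the right-hand side: the integrand $\exp(\sum\al_t^{\ep,j_1j_2})$ is not obviously dominated uniformly in $\ep$ by an integrable function, which is exactly why the nonnegativity observation above — turning the problem into a monotone rather than dominated convergence — is the crucial point. Everything else is bookkeeping with Fubini and the already-established $L^2$ convergence.
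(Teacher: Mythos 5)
The paper itself offers no proof of this proposition (it is recalled from \cite{Ch18}), so your attempt has to be judged on its own. Your $\ep$-level identity $\be[(u^{\ep,\di}_t(x))^p]=\E[\exp(\sum_{j_1<j_2}\al^{\ep,j_1j_2}_t)]$ is correct, and your treatment of $p=2$ is exactly the argument of Proposition~\ref{prop:cvgce-FK-Skorohod}. The problems start with the limit $\ep\to0$ for $p\ge 3$. First, the assertion that each $\al^{j_1j_2}_t$ is a.s.\ nonnegative ``since $\ga,\ga_0$ are positive-definite'' is false in the regime of interest: for $H_j<\tfrac12$, $\ga_j$ is a positive-definite distribution but \emph{not} a nonnegative function, and $\al^{j_1j_2}_t=\int \eta^{j_1}_t\overline{\eta^{j_2}_t}\,d(\mu_0\otimes\mu)$ is a cross inner product of two \emph{distinct} random elements, which can be negative (only the diagonal terms $\beta^{\ep,B^j}_t=\|\eta^j_t\|^2$ and certain expectations are nonnegative). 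Second, and more seriously, the monotone-convergence step — which you yourself identify as the crux — is only justified for a single pair. Your displayed squared-modulus identity shows that, for fixed $(\la,\xi)$, the time-integrated expectation attached to one pair $(j_1,j_2)$ is $\ge 0$, and this indeed gives $\E[(\al^{\ep,12}_t)^n]\uparrow\E[(\al^{12}_t)^n]$. But for $p\ge3$ the $n$-th moment of the full sum contains mixed terms $\E[\al^{\ep,a_1b_1}_t\cdots\al^{\ep,a_nb_n}_t]$ in which one Brownian motion carries several Fourier modes; for fixed $(\la_k,\xi_k)_{k\le n}$ the time-integrated expectation factorizes into one factor per Brownian motion, each of which is complex-valued, and the product is no longer a modulus squared. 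Neither its nonnegativity nor its monotonicity in $\ep$ follows from your observation, so monotone convergence cannot be invoked as stated, and the passage to the limit on the right-hand side is a genuine gap for every $p\ge3$.

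The gap can be repaired, and in a way that also removes the circularity lurking in your unproved uniform $L^{2p}$ bound for the left-hand side. Establish the $p=2$ identity first (pure $L^2$ convergence, no uniform integrability needed); then your scaling argument, which is sound, gives $\al^{12}_{ct}\stackrel{(d)}{=}c^{\,2H_0+H-d}\al^{12}_t$ with $2H_0+H-d>0$ (since $H_0>\tfrac12$ and $d-H<1$), hence $\E[\exp(\la\al^{12}_t)]=\be[|u^{\di}_{c t}(x)|^2]<\infty$ for every $\la,t$ — note this route never needs the false pointwise bound $\sum_{j_1<j_2}\al^{j_1j_2}_t\ge\al^{12}_t$. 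For general $p$, replace monotone convergence by uniform integrability: by the generalized H\"older inequality,
\begin{equation*}
\E\Big[\exp\Big(q\sum_{1\le j_1<j_2\le p}\al^{\ep,j_1j_2}_t\Big)\Big]
\le \E\Big[\exp\big(qN\,\al^{\ep,12}_t\big)\Big], \qquad N=\binom{p}{2},
\end{equation*}
and at the pair level the squared-modulus positivity legitimately yields $\E[(\al^{\ep,12}_t)^n]\le\E[(\al^{12}_t)^n]$, hence a bound uniform in $\ep$ by the just-established exponential integrability of $\al^{12}_t$. Combining this with the convergence $\sum_{j_1<j_2}\al^{\ep,j_1j_2}_t\to\sum_{j_1<j_2}\al^{j_1j_2}_t$ in $L^2(\PP)$ (read off the Fourier representation), Vitali's theorem gives the convergence of the right-hand side, and the same uniform bound supplies the uniform integrability of $(u^{\ep,\di}_t(x))^p$ needed on the left. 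This is essentially the route of \cite{Ch18}, to which the paper defers for this statement.
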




\subsection{Critical moments for $\mathbf{p\ge 2}$}
We assume throughout this section that the critical condition \eqref{eq:sko-critical-regime} is satisfied. In this case we will see that the moments of $u^{\di}$ blow up after a critical time $\ts$ and we shall identify the time $\ts$ in some cases. Before we proceed to the proof of this fact, let us introduce some functional spaces and inequalities of interest.

\begin{definition}\label{def: A_d}
Let $W^{1,2}(\R^{d})$ be the usual Sobolev space of order $(1,2)$ in $\R^{d}$. We define a subset $\ca_{d}$ of functions defined on $\ou\times \R^{d}$ as follows:
\begin{multline*}
{\mathcal A}_d=\Big\{g:\ou\times \R^{d}\to\R;
\hskip.1in g(s,\cdot)\in W^{1,2}(\R^d),
\text{ and }
\int_{\R^d}g^2(s,x)dx=1 \text{ for all } s\in [0,1]\Big\}.
\end{multline*}
\end{definition}
It is readily checked that there exists a constant $C$ such that the following inequality holds  for all $g\in\ca_{d}$:
\begin{multline}\label{eq:funct-ineq} 
\int_{[0,1]^2}\int_{(\R^d)^2}\gamma_0(s-t)\gamma(x-y)g^2(s,x)g^2(t,y)dxdydsdt 
\le C^4
\int_{[0,1]\times\R^d}\vert\nabla_xg(s,x)
\vert^2dsdx.
\end{multline}
\begin{remark}\label{existence of kappa}
When the noise $W$ is time independent, that is when $H_0=1$ (hence $\gamma_0\equiv 1$), equality \eqref{eq:funct-ineq} can be established using the same method as in \cite{BCR09}. The general case then follows from \cite[Lemma 5.2]{Ch17}.
\end{remark}

\begin{notation}\label{not:kappa}
We call $\ka=\ka(H_{0},\bh)$ the best constant in inequality \eqref{eq:funct-ineq}.
\end{notation}

\begin{remark}\label{relation GN inequality}
The classical Gagliardo-Nirenberg inequality asserts that for $p(d-2)\leq d$ we have
\begin{equation}\label{eq:gagliardo}
\|f\|_{L^{2p}(\R^d)}\leq {\kappa}\|\nabla f\|_{L^2(\R^d)}^{\frac{d(p-1)}{2p}}\cdot\|f\|_{L^2(\R^d)}^{1-\frac{d(p-1)}{2p}}.
\end{equation}
The best constant $\kappa$ in the above inequality is called the Gagliardo-Nirenberg constant.  Now in equation \eqref{eq:funct-ineq}  consider the special case when $d=2, p=2,$ and $H_0=1, H_1=H_2=1/2$. In this situation it is clear that the quantity $\kappa$ defined in Notation \ref{not:kappa} coincides with the Gagliardo-Nirenberg constant in relation~\eqref{eq:gagliardo}. 
\end{remark}

We can now state the main result of this section, which identifies the exact time of blowup for the integer moments of $u_{t}(x)$ in the critical case.

\begin{theorem}\label{thm:explosion-critical}
We suppose that condition \eqref{eq:sko-critical-regime} is met for our indices $H_{0}$ and $H$. For any integer $p\ge 2$ we define a critical time  $t_{0}(p)$ by
$$
t_0(p)=
\lp \frac{1}{\ka^{4}(p-1)}  \rp^{{1}/{(2H_{0}-1)}},
$$
where we recall that $\ka$ is the constant introduced in Notation \ref{not:kappa}.  For $\epsilon>0$, let $u^{\epsilon,\diamond}$ be the unique solution to equation \eqref{eq:heat-sko-approx}. Then the following assertions holds true:
\begin{enumerate}[wide, labelwidth=!, labelindent=0pt, label=\textnormal{(\roman*)}]
\setlength\itemsep{.1in}

\item\label{critical-i} 
If $t<t_0\equiv t_0(2)=\kappa^{-4/(2H_0-1)}$, then for \lastchange{every $x\in\R^d$} the sequence $\{u^{\epsilon,\di}_t(x),\epsilon>0\}$ converges in $L^2(\Omega)$ to an element \lastchange{$u^\di_t(x)$}. The process $\{u_t^{\di}(x): \lastchange{t\in [0,t_0)}, x\in\R^d\}$ is said to be the solution of equation \eqref{eq:she-sko}. 
\item\label{critical-ii}  
For any $p\geq2$, if $t<t_0(p)$ the sequence $\{u^{\epsilon,\di}_t(x);\epsilon>0\}$ also converges in $L^p(\Omega)$ to \lastchange{$u^\di_t(x)$}.

\item\label{critical-iii}  
For $p\in(1,\infty)$, if $t>t_0(p)$ and $x\in\R^d$ we have
$$\lim_{\epsilon\to0}\|u^{\epsilon,\di}_t(x)\|_{L^p(\Omega)}=\infty.$$

\end{enumerate}

\end{theorem}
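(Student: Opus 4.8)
The plan is to reduce everything, through the Feynman--Kac formula \eqref{eq:approx-FK-sol-sko} for the approximations $u^{\ep,\di}$ and a Brownian scaling, to one sharp exponential-integrability threshold for a single self-intersection functional, and then to handle the finiteness regime and the blow-up regime by two different duality arguments. For the scaling step, observe that \eqref{eq:def-alpha-j1j2} only involves the differences $B^{j_{1}}_{s_{1}}-B^{j_{2}}_{s_{2}}$, so the law of $\al^{j_{1}j_{2}}_{t}$ does not depend on $x$; writing $B^{j}_{s}=\sqrt t\,B^{j}_{s/t}$ in law and using that $\ga_{0}$ and $\ga$ from \eqref{eq:def-mu} are homogeneous of respective degrees $2H_{0}-2$ and $2H-2d$ — the latter being exactly $-2$ because the critical condition \eqref{eq:sko-critical-regime} forces $d-H=1$ — one gets the joint scaling identity
\[
\big(\al^{j_{1}j_{2}}_{t}\big)_{1\le j_{1}<j_{2}\le p}\ \overset{d}{=}\ t^{2H_{0}-1}\,\big(\al^{j_{1}j_{2}}_{1}\big)_{1\le j_{1}<j_{2}\le p}.
\]
On the other hand, for each integer $p\ge 2$ and each $\ep>0$ the Wick-product expansion of \eqref{eq:approx-FK-sol-sko} gives $\|u^{\ep,\di}_{t}(x)\|_{L^{p}(\oom)}^{p}=\E\big[\exp\big(\sum_{1\le j_{1}<j_{2}\le p}\al^{\ep,j_{1}j_{2}}_{t}\big)\big]$, where $\al^{\ep,j_{1}j_{2}}_{t}$ is the $\ep$-mollified version of \eqref{eq:def-alpha-j1j2}, and this quantity increases to $\E\big[\exp\big(\sum_{1\le j_{1}<j_{2}\le p}\al^{j_{1}j_{2}}_{t}\big)\big]$ as $\ep\downarrow0$ by monotone convergence and the nonnegativity of the Brownian-averaged Fourier integrands, exactly as in the proof of Proposition \ref{prop:cvgce-FK-Skorohod}. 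Thus everything reduces to deciding, for $\theta=t^{2H_{0}-1}$, whether $\E\big[\exp\big(\theta\sum_{1\le j_{1}<j_{2}\le p}\al^{j_{1}j_{2}}_{1}\big)\big]$ is finite.

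For items \ref{critical-i} and \ref{critical-ii}, fix $t<t_{0}(p)$. The $\ch$-inner product \eqref{eq:covariance-W-with-Fourier} is positive semidefinite, so, writing $\al^{j_{1}j_{2}}_{1}=\langle\nu^{j_{1}},\nu^{j_{2}}\rangle_{\ch}$ for the (a.s.\ finite) occupation-measure elements $\nu^{j}\in\ch$ of the Brownian paths, Cauchy--Schwarz followed by $ab\le\frac12(a^{2}+b^{2})$, summed over the $\binom{p}{2}$ pairs, yields $\sum_{1\le j_{1}<j_{2}\le p}\al^{j_{1}j_{2}}_{1}\le\frac{p-1}{2}\sum_{j=1}^{p}\al^{jj}_{1}$; combined with the independence of the $B^{j}$ and the scaling above this gives
\[
\E\Big[\exp\Big(t^{2H_{0}-1}\sum_{1\le j_{1}<j_{2}\le p}\al^{j_{1}j_{2}}_{1}\Big)\Big]\ \le\ \E\Big[\exp\Big(\tfrac{(p-1)\,t^{2H_{0}-1}}{2}\,\al^{11}_{1}\Big)\Big]^{p}.
\]
The analytic input is the sharp one-Brownian-motion estimate $\E\big[\exp(\la\,\al^{11}_{1})\big]<\infty$ for every $\la<\frac{1}{2\ka^{4}}$, with $\ka$ the best constant of \eqref{eq:funct-ineq} (Notation \ref{not:kappa}); this comes from a Feynman--Kac/Donsker--Varadhan analysis in which \eqref{eq:funct-ineq} is the controlling functional inequality. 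Since $t<t_{0}(p)$ is precisely the inequality $\frac{(p-1)t^{2H_{0}-1}}{2}<\frac{1}{2\ka^{4}}$, the right-hand side above is finite, so $\sup_{\ep>0}\|u^{\ep,\di}_{t}(x)\|_{L^{p}(\oom)}<\infty$. Choosing $p'>p$ with $t<t_{0}(p')$ (possible because $t_{0}$ is continuous and strictly decreasing) gives uniform $L^{p'}(\oom)$-boundedness, hence uniform integrability of $\{|u^{\ep,\di}_{t}(x)|^{p}\}_{\ep>0}$; an $L^{2}$-Cauchy estimate as in the proof of Proposition \ref{prop:cvgce-FK-Skorohod} — now valid on all of $[0,t_{0})$ because the threshold is sharp — then upgrades to convergence in $L^{p}(\oom)$ to a limit $u^{\di}_{t}(x)$. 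This proves \ref{critical-i} and \ref{critical-ii} and shows, in passing, that $\ts$ in Proposition \ref{prop:local-existence-uniqueness-sko} may be taken equal to $t_{0}=t_{0}(2)$.

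For item \ref{critical-iii}, which covers every real $p\in(1,\infty)$, I would bound $\|u^{\ep,\di}_{t}(x)\|_{L^{p}(\oom)}$ from below by duality through the $\cs$-transform. Writing $\mathcal E(f)=\exp\big(W(f)-\tfrac12\|f\|_{\ch}^{2}\big)$ for $f\in\ch$, relation \eqref{eq:approx-FK-sol-sko} reads $u^{\ep,\di}_{t}(x)=\E_{B}[\mathcal E(\phi^{\ep}_{B})]$ with $\phi^{\ep}_{B}$ given by \eqref{eq:phi}; hence for every $h\in\ch$ one has $\cs u^{\ep,\di}_{t}(x)(h):=\be\big[u^{\ep,\di}_{t}(x)\,\mathcal E(h)\big]=\E_{B}\big[e^{\langle\phi^{\ep}_{B},h\rangle_{\ch}}\big]$ and $\|\mathcal E(h)\|_{L^{q}(\oom)}=e^{\frac{q-1}{2}\|h\|_{\ch}^{2}}$ with $q=p/(p-1)$, so Hölder's inequality yields
\[
\|u^{\ep,\di}_{t}(x)\|_{L^{p}(\oom)}\ \ge\ \E_{B}\big[e^{\langle\phi^{\ep}_{B},h\rangle_{\ch}}\big]\;e^{-\frac{q-1}{2}\|h\|_{\ch}^{2}}.
\]
I would then centre at $x$ and take $h=c\,h_{\rho}$, where $h_{\rho}\in\ch$ is the element represented by the space-time measure $g_{\rho}^{2}(y-x)\,\1_{[0,t]}(s)\,dy\,ds$, with $g_{\rho}(y)=\rho^{d/2}g(\rho y)$ and $g$ close to an extremizer of \eqref{eq:funct-ineq}; one computes $\|h_{\rho}\|_{\ch}^{2}=t^{2H_{0}}\rho^{2}\mathsf q$ with $\mathsf q=\int_{(\R^{d})^{2}}\ga(y-z)g^{2}(y)g^{2}(z)\,dy\,dz$. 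Restricting the Brownian expectation to the event that the occupation measure $\frac1t\int_{0}^{t}\delta_{B_{s}}\,ds$ is close to $g_{\rho}^{2}\,dy$ — an event of probability at least $\exp\big(-\tfrac{t}{2}\rho^{2}\int_{\R^{d}}|\nabla g|^{2}\,(1+o_{\rho}(1))\big)$ by a Girsanov tilt towards the diffusion with invariant density $g_{\rho}^{2}$ — one checks that $\langle\phi^{\ep}_{B},h_{\rho}\rangle_{\ch}\to\|h_{\rho}\|_{\ch}^{2}$ on that event after letting $\ep\to0$. Optimizing the scalar $c$ (the optimum being $c=1/(q-1)$), the resulting lower bound is
\[
\liminf_{\ep\to0}\|u^{\ep,\di}_{t}(x)\|_{L^{p}(\oom)}\ \ge\ \exp\Big(\tfrac{\rho^{2}}{2}\Big[\tfrac{t^{2H_{0}}\mathsf q}{q-1}-t\!\int_{\R^{d}}\!|\nabla g|^{2}\Big](1+o_{\rho}(1))\Big).
\]
Since $q-1=\frac{1}{p-1}$ and $\sup\big\{\mathsf q/\!\int_{\R^{d}}|\nabla g|^{2}:\int_{\R^{d}}g^{2}=1\big\}=\ka^{4}$ — the extremal profile in \eqref{eq:funct-ineq} being time-homogeneous because $\mu_{0}$ charges every neighbourhood of the origin — the bracket is strictly positive as soon as $t^{2H_{0}-1}>\frac{1}{(p-1)\ka^{4}}$, i.e.\ $t>t_{0}(p)$; letting $\rho\to\infty$ then gives $\lim_{\ep\to0}\|u^{\ep,\di}_{t}(x)\|_{L^{p}(\oom)}=\infty$.

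The main obstacle sits at the interface of the probabilistic and the analytic sides. First, one must establish the one-Brownian-motion exponential integrability with the \emph{optimal} constant $\frac{1}{2\ka^{4}}$, i.e.\ run the Feynman--Kac/Donsker--Varadhan scheme (in the spirit of \cite{BCR09,Ch17,Ch18}) all the way down to the best constant of \eqref{eq:funct-ineq}, rather than to some non-explicit threshold. Second, for the blow-up one needs the matching large-deviation \emph{lower} bound for the confinement probability $\PP\big(\tfrac1t\int_{0}^{t}\delta_{B_{s}}\,ds\approx g_{\rho}^{2}\big)$ with the precise exponent $\tfrac{t}{2}\int|\nabla g_{\rho}|^{2}$ and an error $o(\rho^{2})$, together with the fact that on this event $\langle\phi^{\ep}_{B},h_{\rho}\rangle_{\ch}$ genuinely concentrates at $\|h_{\rho}\|_{\ch}^{2}$; and one must verify that time-homogeneous profiles are extremal in \eqref{eq:funct-ineq}, so that the probabilistic optimization lands exactly on the constant $\ka$ of the statement. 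Finally, in the $\cs$-transform step the interchange of the limits $\ep\to0$ and $\rho\to\infty$ with the restriction to the confinement event has to be made rigorous; this, together with reconciling the three points above, is where the real work lies.
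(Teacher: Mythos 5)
Your reduction for items \ref{critical-i} and \ref{critical-ii} breaks down at its very first step. You bound $\sum_{1\le j_{1}<j_{2}\le p}\al^{j_{1}j_{2}}_{1}\le\frac{p-1}{2}\sum_{j=1}^{p}\al^{jj}_{1}$ and then invoke a ``sharp one-Brownian-motion estimate'' $\E[\exp(\la\,\al^{11}_{1})]<\infty$ for $\la<\frac{1}{2\ka^{4}}$. But $\al^{jj}_{t}$ is a weighted \emph{self}-intersection functional of a single Brownian motion, and under the critical condition \eqref{eq:sko-critical-regime} it is not even finite: since $\E[\ga(B_{s_{1}}-B_{s_{2}})]=\int_{\R^{d}}e^{-\frac{|s_{1}-s_{2}||\xi|^{2}}{2}}\mu(d\xi)\asymp|s_{1}-s_{2}|^{-(d-H)}=|s_{1}-s_{2}|^{-1}$, the diagonal contribution behaves like $\int_{[0,t]^{2}}|s_{1}-s_{2}|^{-(3-2H_{0})}ds_{1}ds_{2}=\infty$ (as $3-2H_{0}>1$), so $\al^{jj}_{t}=+\infty$ a.s.\ and it has no exponential moments for any $\la>0$. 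This divergence of the diagonal is exactly what the Wick correction $-\frac12\beta^{\ep,B}_{t}$ in \eqref{eq:approx-FK-sol-sko} removes, which is why the moment formula \eqref{eq: FK moments-sko} only involves mutual intersections of \emph{independent} Brownian motions. So your Cauchy--Schwarz majorant is useless even for $p=2$, and the analytic input you rely on is false. The paper instead proves the sharp exponential integrability of the mutual intersection $\al^{12}_{t}$ of two independent Brownian motions directly, through a long chain of reductions (scaling, cutoffs of $\mu_{0}$ and $\mu$, a Girsanov comparison with an Ornstein--Uhlenbeck process, a compactness/linearization of the relevant $\mathcal G$-norm, and the Feynman--Kac asymptotics of \cite{CHSX}), calibrated to the best constant $\ka$ of \eqref{eq:funct-ineq}; and for $p>2$ it does not touch the sum over pairs at all, but uses Le's hypercontractivity \cite{Le}, which converts the $L^{p}$ bound into an $L^{2}$ bound for the noise with intensity $p-1$, i.e.\ an $L^{2}$ bound at the dilated time $(p-1)^{1/(2H_{0}-1)}t$ --- this is where the exponent $1/(2H_{0}-1)$ in $t_{0}(p)$ comes from. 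Without the sharp two-body estimate and a substitute for the hypercontractivity step, your items \ref{critical-i} and \ref{critical-ii} are not established.

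For item \ref{critical-iii} your mechanism (lower bound via the $\cs$-transform, H\"older against the exponential martingale $\mathcal E_{\varphi}$, then optimization of the test element) is essentially the paper's, but the implementation diverges where precision matters: the paper rescales $h$ into $h_{t,b}$, applies \cite[Proposition 3.1]{Ch18} to get the exact Feynman--Kac limit over the space $\mathcal A_{d}$ of \emph{space-time} profiles, and then chooses $h^{\theta,g}=\theta\,\overline{\tilde{\cf}(g^{2})}$ for general $g\in\mathcal A_{d}$, which lands exactly on $\ka$ with no extremality claim needed. Your construction restricts to time-independent $g$, asserts without proof that time-homogeneous profiles are extremal in \eqref{eq:funct-ineq}, and leans on an occupation-measure confinement lower bound with exponent $\frac{t}{2}\int|\nabla g_{\rho}|^{2}$ and on the concentration of $\langle\phi^{\ep}_{B},h_{\rho}\rangle_{\ch}$ that you do not establish; these would all require substantial work to justify, whereas the paper's route avoids them. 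The fatal gap, however, is the one in the finiteness part described above.
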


\begin{remark}\label{rmk:critical-sko}
In the critical case when $p\in[1,2)$ the notion of solution to equation \eqref{eq:she-sko} or~\eqref{eq:mild-sko} is not as clear as in Section 
\ref{sec:fk-representation-sko}. However, let us mention the following facts:
\begin{enumerate}[wide, labelwidth=!, labelindent=0pt, label=(\alph*)]
\setlength\itemsep{.1in}

\item\label{it:sko-eq-limit} 
If $p\geq2$ and $t<t_0(p)$, then we can take limits in equation \eqref{eq:heat-sko-approx} and show that the limit $\{\lastchange{u^\di_t(x)}; t\geq 0, x\in\R^d\}$ still solves the Skorohod equation \eqref{eq:mild-sko}. We have not provided details for the sake of conciseness. 

\item Whenever $p\in(1,\infty)$ and $t>t_0(p)$ one can not take limits in equation \eqref{eq:mild-sko}, even if one resorts to the extended Skorohod setting of Definition \ref{def1}. However, item (iii) in Theorem~\ref{thm:explosion-critical} asserts that if we could give a meaning to equation \eqref{eq:mild-sko} by a regularization procedure, then its solution $u_t(x)$ would not belong to $L^p(\Omega)$.

\item 
Our current techniques do not allow to assert the $L^p$-convergence of $u^\epsilon$
when $p\in (1,2)$ and $t<t_0(p)$. Neither are we able to prove the existence of a $L^p$-solution of~\eqref{eq:mild-sko} for $p<2$, even if we invoke the extended Skorohod setting. On the other hand, the definition of $t_0(p)$ in Theorem \ref{thm:explosion-critical} clearly allows $p$ to be less than $2$. It is therefore reasonable to conjecture that when $p\in (1,2)$ and $t<t_0(p)$, $u^\epsilon$ converges in $L^p$ to the unique solution of equation \eqref{eq:mild-sko} in the extended Skorohod setting. Provided this can be achieved, Proposition \ref{thm: uniqueness-extended Skorohod} establishes the uniqueness of the aforementioned extended solution.

\end{enumerate}
\end{remark}

The proof of Theorem \ref{thm:explosion-critical} is split in the sections below.

\subsection{Proof of the $\mathbf{L^{p}}$-convergence in Theorem \ref{thm:explosion-critical}}\label{sec:proof-upper-bound}
In this section we work under the critical assumption \eqref{eq:sko-critical-regime}. Before we proceed to the proof of the relation $\be \lc |u^\di_t(x)|^p \rc<\infty$ for $t$ small enough, we first state a general sub-additivity result which is useful for our next computations. It is borrowed from \cite[Theorem 6.1]{Ch18}.
\begin{lemma}\label{sub}
Let $\nu(d\xi)$ be a measure on $\R^d$, and consider two $\R^d$-valued independent Brownian motions $B$ and $\widetilde{B}$. For $t\geq0$ we introduce the random variable ${\mathcal H}_t$ defined by
$$
{\mathcal H}_t=\int_{\R^d}\nu(d\xi)\bigg[\int_0^te^{i\xi\cdot B_s}ds\bigg]
\bigg[\int_0^te^{-i\xi\cdot \widetilde{B}_s}ds\bigg].
$$
Then for any $t_1,t_2>0$ and $\theta\in\R_+$, the following inequality holds true:
\begin{multline}\label{sub-2}
\sum_{n=0}^\infty \frac{\theta^n}{n!}\left\{\frac{1}{(t_1+t_2)^n}\E\left[{\mathcal H}_{t_1+t_2}^n\right]\right\}
\\
\le\bigg(\sum_{n=0}^\infty\frac{\theta^n}{n!}
\left\{\frac{1}{t_1^n}\E\left[{\mathcal H}_{t_1}^n\right]\right\}
\bigg)
\bigg(\sum_{n=0}^\infty \frac{\theta^n}{n!}\left\{\frac{1}{t_2^n}\E\left[{\mathcal H}_{t_2}^n\right]\right\}
\bigg)
\end{multline}
whenever the right hand side is finite. 
\end{lemma}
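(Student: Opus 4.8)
The plan is to deduce \eqref{sub-2} from a single scalar inequality for each power of $\theta$. Comparing the coefficient of $\theta^{N}$ on the two sides of \eqref{sub-2} (the right-hand side being the Cauchy product of the two series), it suffices to prove that, for every integer $N\ge 0$,
\begin{equation*}
\frac{1}{(t_{1}+t_{2})^{N}}\,\E\big[{\mathcal H}_{t_{1}+t_{2}}^{N}\big]\;\le\;\sum_{m=0}^{N}\binom{N}{m}\,\frac{\E[{\mathcal H}_{t_{1}}^{m}]\,\E[{\mathcal H}_{t_{2}}^{N-m}]}{t_{1}^{m}\,t_{2}^{N-m}} .
\end{equation*}
All the moments $\E[{\mathcal H}_{t}^{n}]$ will be seen to be nonnegative, so multiplying this by $\theta^{N}/N!\ge 0$ and summing over $N$ recovers \eqref{sub-2}, and finiteness of the right-hand side of \eqref{sub-2} then forces finiteness on the left term by term.

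To prove the scalar inequality I would first rewrite the moments on the Fourier side. With the convention \eqref{convention product measure}, using the independence and the equality in law of $B$ and $\widetilde B$, one gets
\begin{equation*}
\E\big[{\mathcal H}_{t}^{N}\big]=\int_{(\R^{d})^{N}}\Phi_{t}(\xi)^{2}\,\nu(d\xi),\qquad
\Phi_{t}(\xi):=\int_{[0,t]^{N}}\E\Big[\exp\Big(\ii\sum_{k=1}^{N}\xi_{k}\cdot B_{s_{k}}\Big)\Big]\,ds ,
\end{equation*}
where $\xi=(\xi_{1},\dots,\xi_{N})$ and $\Phi_{t}\ge 0$ (hence $\E[{\mathcal H}_{t}^{N}]\ge 0$), since the integrand $\exp(-\tfrac12\mathrm{Var}(\sum_{k}\xi_{k}\cdot B_{s_{k}}))$ is positive. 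The key structural step is a decomposition of $\Phi_{t_{1}+t_{2}}$ obtained by splitting $[0,t_{1}+t_{2}]^{N}$ according to which of the $s_{k}$'s lie in $[0,t_{1}]$ and writing the restriction of $B$ to $[0,t_{1}]$ and its increments after $t_{1}$ as independent pieces: for $I\subseteq\{1,\dots,N\}$ with complement $I^{c}$ and $\eta_{I}:=\sum_{k\in I^{c}}\xi_{k}$,
\begin{equation*}
\Phi_{t_{1}+t_{2}}(\xi)=\sum_{I\subseteq\{1,\dots,N\}}\widehat h_{t_{1}}\big((\xi_{k})_{k\in I},\eta_{I}\big)\;\Phi_{t_{2}}\big((\xi_{k})_{k\in I^{c}}\big),
\end{equation*}
with nonnegative summands, where $\widehat h_{t}(\zeta,\eta):=\int_{[0,t]^{m}}\E[\exp(\ii\sum_{l}\zeta_{l}\cdot B_{s_{l}}+\ii\,\eta\cdot B_{t})]\,ds$ carries the contribution of $[0,t_{1}]$ and the frequencies indexed by $I^{c}$ enter it only through their sum $\eta_{I}$, attached to the endpoint value $B_{t_{1}}$.

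Given this decomposition, the scalar inequality follows by a weighted Cauchy–Schwarz step: since $\sum_{I}t_{1}^{|I|}t_{2}^{N-|I|}=(t_{1}+t_{2})^{N}$,
\begin{equation*}
\Phi_{t_{1}+t_{2}}(\xi)^{2}\le (t_{1}+t_{2})^{N}\sum_{I\subseteq\{1,\dots,N\}}\frac{\widehat h_{t_{1}}\big((\xi_{k})_{k\in I},\eta_{I}\big)^{2}\,\Phi_{t_{2}}\big((\xi_{k})_{k\in I^{c}}\big)^{2}}{t_{1}^{|I|}\,t_{2}^{N-|I|}} .
\end{equation*}
Integrating against $\nu^{\otimes N}$, the $I$-th term factorizes except for the coupling through $\eta_{I}$, and what remains is the estimate $G_{m}(\eta):=\int \widehat h_{t_{1}}(\zeta,\eta)^{2}\,\nu^{\otimes m}(d\zeta)\le G_{m}(0)=\E[{\mathcal H}_{t_{1}}^{m}]$ for all $\eta\in\R^{d}$; granting it, the $I$-th term is bounded by $\E[{\mathcal H}_{t_{1}}^{|I|}]\,\E[{\mathcal H}_{t_{2}}^{N-|I|}]$ (the second factor being $\int\Phi_{t_{2}}^{2}\,\nu^{\otimes (N-|I|)}$ by the representation above), and summing over $I$ grouped by $m=|I|$ reproduces exactly the right-hand side of the scalar inequality.

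The main obstacle is precisely the bound $G_{m}(\eta)\le G_{m}(0)$: the endpoint frequency $\eta$ genuinely couples the two time blocks and there is no pointwise monotonicity of $\widehat h_{t_{1}}(\zeta,\cdot)$. The resolution is to ``undo the square'' by reintroducing the second Brownian motion: a short computation (independence and equal laws) gives
\begin{equation*}
G_{m}(\eta)=\E\Big[\int_{[0,t_{1}]^{m}}\int_{[0,t_{1}]^{m}}\Big(\prod_{l=1}^{m}\ga\big(B_{s_{l}}-\widetilde B_{r_{l}}\big)\Big)\,e^{\,\ii\,\eta\cdot(B_{t_{1}}-\widetilde B_{t_{1}})}\,ds\,dr\Big],\qquad \ga:=\widehat\nu ,
\end{equation*}
the expectation being over $B$ and $\widetilde B$. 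Since $\ga\ge 0$ in all situations considered (e.g. for the regularized spectral measures $\mu^{\ep}$; the general case follows by mollifying $\nu$ and monotone convergence in the representation of $\E[{\mathcal H}_{t}^{N}]$), the triangle inequality together with $|e^{\ii\,\eta\cdot(\cdot)}|=1$ yields $|G_{m}(\eta)|\le G_{m}(0)$, while $G_{m}(\eta)\ge 0$ because $\widehat h_{t_{1}}$ is real-valued; this is exactly the desired estimate. The remaining points — the Cauchy-product bookkeeping, the independent-increments identity for $\Phi_{t_{1}+t_{2}}$, and the binomial identity for the weights $t_{1}^{|I|}t_{2}^{N-|I|}$ — are routine.
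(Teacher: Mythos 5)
Your overall architecture is sound, and in fact the paper itself offers no proof to compare against: Lemma \ref{sub} is simply quoted from \cite[Theorem 6.1]{Ch18}. The reduction to the coefficient-wise inequality (Cauchy product, all moments nonnegative), the subset decomposition of $\Phi_{t_1+t_2}$ via independent increments, and the weighted Cauchy--Schwarz step with weights $t_1^{|I|}t_2^{N-|I|}$ are all correct. The gap is exactly at the step you single out as the crux, namely $G_m(\eta)\le G_m(0)$, and the justification you give for it does not hold. You discard the endpoint phase by rewriting $G_m(\eta)$ as an expectation of $\prod_l\gamma(B_{s_l}-\widetilde B_{r_l})\,e^{\ii\,\eta\cdot(B_{t_1}-\widetilde B_{t_1})}$ and invoking $\gamma=\widehat\nu\ge 0$. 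But the lemma is stated for an arbitrary measure $\nu$, and in the one place the paper uses it (Step 3 of the proof of Theorem \ref{thm:explosion-critical}, applied to ${\mathcal H}^c_b(M)$ of \eqref{def: H_b^c(M)}) the measure is $\mu\,\1_{([-M,M]^d)^c}$, the spectral measure restricted to the complement of a box: this restricted measure is not finite (its density $\prod_j|\xi_j|^{1-2H_j}$ does not decay), so $\widehat\nu$ is not a pointwise-defined function and the Fubini step producing $\gamma$ is itself problematic; and even formally its Fourier transform oscillates and is not nonnegative. Mollifying $\nu$ cannot repair this: multiplying $\nu$ by $e^{-\ep|\xi|^2}$ only convolves $\widehat\nu$ with a Gaussian and does not create positivity.

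Worse, the estimate $G_m(\eta)\le G_m(0)$ is false in the generality in which you use it, so no soft limiting argument can rescue this route. Take $d=1$, $m=1$, $t_1=1$, $\nu=\delta_1$ and $\eta=-\tfrac12$: then $\widehat h_1(1,-\tfrac12)=\int_0^1 e^{-\frac12(s-s+\frac14)}\,ds=e^{-1/8}\approx 0.88$, whereas $\widehat h_1(1,0)=\int_0^1 e^{-s/2}\,ds=2(1-e^{-1/2})\approx 0.79$, so $G_1(-\tfrac12)>G_1(0)$. Thus the coupling through $\eta_I=\sum_{k\in I^c}\xi_k$ genuinely cannot be dropped term by term on the basis of positivity of $\widehat\nu$; any correct treatment must use additional structure (for instance the symmetry of the spectral measures involved, or an argument that keeps the two Brownian motions and the squared structure together instead of bounding each subset contribution separately, as in Chen's original proof). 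As written, the scalar inequality behind \eqref{sub-2} is not established, so the proof is incomplete at its key step.
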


\begin{remark}
We could obviously have stated \eqref{sub-2} in the following exponential form,
\begin{align*}
\E\bigg[\exp\Big(\frac{\theta{\mathcal H}_{t_1+t_2}}{t_1+t_2}\Big)\bigg]\leq \E\bigg[\exp\Big(\frac{\theta{\mathcal H}_{t_1}}{t_1}\Big)\bigg]\,\E\bigg[\exp\Big(\frac{\theta{\mathcal H}_{t_2}}{t_2}\Big)\bigg].
\end{align*}
However, equation \eqref{sub-2} is the one which will be used for our computations below.

\end{remark}

We will now separate the upper bound computations for $u^\di_t(x)$ into a $L^2$ bound and $L^p$ bounds for $p>2$.
\subsubsection{Convergence for $p=2$}\label{sec:convergence-p-equal-2}
Our aim in this section is to prove item \ref{critical-i} in Theorem~\ref{thm:explosion-critical}. According to the Feynman-Kac representation \eqref{eq: FK moments-sko}, an upper bound on the $L^{2}$ moments of \lastchange{$u^\di_{t}(x)$} amounts to prove the following relation
\begin{align}\label{eq: moment bound alpha^12}
\E\left[\exp(\alpha^{12}_t)\right]<\infty,\quad
\text{for all}\ \ t<\frac{1}{\kappa^{{4/ (2H_0-1)}}}\equiv t_0(2),
\end{align}
where we recall that the constant $\kappa$ is defined in Notation \ref{not:kappa}. Also notice that the random variable $\alpha^{12}_t$ introduced in \eqref{eq:def-alpha-j1j2} will be written as
\begin{align}\label{eq: representing alpha12}
\alpha^{12}_t=\int_{[0,t]^2}\gamma_0(s-r) \, \gamma(B_{s}-\widetilde{B}_{r})\,
ds dr,
\end{align}
for two independent Brownian motions $B$ and $\widetilde{B}$.  In addition, notice that only the large positive values of $\alpha^{12}_t$ might be responsible \lastchange{for} the blowup of exponential moments. Therefore relation \eqref{eq: moment bound alpha^12} can be easily deduced from the following asymptotic tail behavior
\begin{align}\label{eq: tail estimate of alpha12}
\limsup_{b\to\infty}\frac{1}{b}\log \PP\left\{
\alpha_t^{12}\geq b\right\}\le -\frac{1}{\kappa^{4}\,t^{2H_0-1}}.
\end{align}
We now proceed to prove \eqref{eq: tail estimate of alpha12}, and we divide our proof in several steps.

\bigskip
\noindent{\it Step 1:  Scaling arguments.} We recall once again that $\gamma_0$ and $\gamma$ are introduced in \eqref{eq:def-gamma}. Formally they are given by
\begin{align}\label{recall gamma}
\gamma_0(u)=C_{H_0}\vert u\vert^{-\alpha_0},\hskip.1in\hbox{and}
\hskip.1in\gamma(x)
=C_{\bf H}\prod_{j=1}^d \vert x_j\vert^{-\alpha_j},\quad\text{where}\ \ \alpha_j=2-2H_j.
\end{align}
With this formal expression in mind, we consider $b>0$ and set $\sigma=\frac{b}{t}s, \rho=\frac{b}{t}r$ in expression~\eqref{eq: representing alpha12}.  Invoking the usual Brownian scaling and the fact that $\sum_{j=1}^d(H_j-1)=-1$ under our critical assumption \eqref{eq:sko-critical-regime}, we let the patient reader check that 
\begin{align}\label{eq: alpha^12 after rescaling}
\alpha^{12}_t\stackrel{(d)}{=}b^{-1}t^{2H_0-1}\!\int_{[0,b]^2}
\gamma_0\big(b^{-1}(\sigma-\rho)\big)\gamma\big(B_\sigma-\widetilde{B}_\rho\big)
d\sigma d\rho.
\end{align}
As mentioned in the introduction, expressions like \eqref{eq: alpha^12 after rescaling} are better expressed in Fourier modes, thanks to \eqref{eq:rep-gamma-fourier} and \eqref{eq:def-mu}. We get 
\begin{multline*}
\alpha_t^{12} 
\stackrel{(d)}{=}b^{-1}t^{2H_0-1} 
\times
\int_{\R^{d+1}}\mu_0(d\lambda)\mu(d\xi)
\bigg[\int_0^b e^{i(\lambda b^{-1}s+\xi\cdot B_s)}ds\bigg]
\bigg[\int_0^b e^{-i(\lambda b^{-1}s+\xi\cdot \widetilde{B}_s)}ds\bigg].
\end{multline*}
In order to write the above expression in a more compact way, let us define the following random variables,
\begin{align}\label{def: eta_b}
\eta_b(\lambda,\xi)=\int_0^b e^{i(\lambda b^{-1}s+\xi\cdot B_s)}ds,
\hskip.1in\hbox{and}\hskip.1in
\tilde{\eta}_b(\lambda,\xi)=
\int_0^b e^{-i(\lambda b^{-1}s+\xi\cdot \widetilde{B}_s)}ds, 
\end{align}
and denote by $Z_b$ the weighted integral of $\eta_b$ and $\tilde{\eta}_b$, namely
\begin{align*}
Z_b=\int_{\R^{d+1}}\eta_b(\lambda,\xi)\tilde{\eta}_b(\lambda,\xi)\mu_0(d\lambda)\mu(d\xi).
\end{align*}
We end up with the following identity in law
\begin{align}\label{eq: identity in law alpha and Z}
\alpha_t^{12}\stackrel{(d)}{=}b^{-1}t^{2H_0-1}Z_b.
\end{align}

We now go back to our main objective. Plugging \eqref{eq: identity in law alpha and Z} into \eqref{eq: tail estimate of alpha12}, a few elementary algebraic manipulations yield that \eqref{eq: moment bound alpha^12} can be reduced to the following bound,
\begin{align}\label{tail estimate Z_b midstep}
\limsup_{b\to\infty}\frac{1}{b}\log\PP\left\{|Z_b|^{1/2}\geq \frac{b}{t^{H_0-1/2}}\right\}
\le -\frac{1}{\kappa^4t^{2H_0-1}}.
\end{align}
Moreover, since $t$ is an arbitrary positive number above, we simply set $u=t^{-(H_0-1/2)}$. We get that \eqref{eq: moment bound alpha^12} is implied by the inequality
\begin{align}\label{c-5}
\limsup_{b\to\infty}\frac{1}{b}\log\PP\left\{\left\vert
Z_b\right\vert^{1/2}\ge ub\right\}\PP
\le -\frac{u^2}{\kappa^{4}},
\end{align}
which should hold for all $u>0$.

As a last preliminary step, we set up a cutoff procedure on the random variable $Z_b$. Namely for $\delta>0$ we define
\begin{align}\label{eq: cutoff mu_0}
\mu_0^\delta(d\lambda)= e^{-\delta\vert\lambda\vert^2}\mu_0(d\lambda),
\hskip.1in\hbox{and}\hskip.1in
\bar{\mu}_0^\delta(d\lambda)= (1-e^{-\delta\vert\lambda\vert^2})
\mu_0(d\lambda).
\end{align}
Accordingly, we also set
\begin{eqnarray}\label{ldp-5'}
Z_b^\delta&=&
\int_{\R^{d+1}}\eta_b(\lambda, \xi)
\tilde{\eta}_b(\lambda, \xi)\mu_0^\delta(d\lambda)\mu(d\xi) \notag \\
\bar{Z}_b^\delta&=&
\int_{\R^{d+1}}\eta_b(\lambda, \xi)
\tilde{\eta}_b(\lambda, \xi)\bar{\mu}^\delta_0(d\lambda)\mu(d\xi),
\end{eqnarray}
and notice that $Z_b=Z_b^\delta+\bar{Z}_b^\delta$.
We will treat $Z_b^\delta$ and $\bar{Z}_b^\delta$ separately in \eqref{c-5}.

\noindent{\it Step 2: Identification of a negligible term.} In this step we prove that the contribution of $\bar{Z}_b^\delta$ in \eqref{c-5} is negligible. Specifically, recalling that $\bar{Z}_b^\delta$ is defined by \eqref{ldp-5'}, given any $\epsilon>0$ and $L>0$ we prove that
\begin{align}\label{s-1}
\limsup_{b\to\infty}\frac{1}{b}
\log\PP\left\{|\bar{Z}_b^\delta|^{1/2}\ge \epsilon b\right\}\le -L,
\end{align}
when $\delta$ is small enough.   As a first step in this direction, introduce an additional parameter $\hat{\alpha}_0\in(\alpha_0,1)$, where we recall that $\alpha_0$ is defined by \eqref{recall gamma}.  According to the definition \eqref{eq: cutoff mu_0} of $\bar{\mu}_0^\delta$ we have
\begin{align*}
\bar{\mu}_0^\delta(d\lambda)=\left(1-e^{-\delta|\lambda|^2}\right)\mu_0(d\lambda)\le 
\big( 1-e^{-\delta\vert\lambda\vert^2}\big)^{\frac{\hat{\alpha}_0-\alpha_0}{2}}
\mu_0(d\lambda).
\end{align*}
Owing to the elementary relation $1-e^{-x}\leq x$ for $x\geq 0$, recalling from \eqref{eq:def-mu} that $\mu_0(d\lambda)=c_{0}|\lambda|^{-(1-\alpha_0)}d\lambda,$ and setting $$\hat{\mu}_0(d\lambda)=c_{0}|\lambda|^{-(1-\hat{\alpha}_0)}d\lambda,$$ we get
\begin{align}\label{eq: bound bar mu_0^delta by hat mu_0^delta}
\bar{\mu}_0^\delta(d\lambda)&\le \delta^{\frac{\hat{\alpha}_0-\alpha_0}{2}}
 \vert\lambda\vert^{\hat{\alpha}_0-\alpha_0}
\mu_0(d\lambda)= \delta^{\frac{\hat{\alpha}_0-\alpha_0}{2}}
\hat{\mu}_0(d\lambda).
\end{align}
We now compute the $n$-th moment of the random variable $\bar{Z}_b^\delta$. Starting from definition \eqref{ldp-5'} and invoking our convention \eqref{convention product measure} on product measures, for all $n\geq 1$ we have
\begin{align}\label{eq: nth moment of bar Z_b^deta}
\E\left[\left(\bar{Z}_b^\delta\right)^n\right]=\int_{(\R^{d+1})^n}\E\left[\prod_{k=1}^n\eta_b(\lambda_k, \xi_k)\tilde{\eta}_b(\lambda_k, \xi_k)\right]\bar{\mu}_0^\delta(d\lambda)\mu(d\xi)
.\end{align}
In addition the families $\{ \eta_b(\lambda_k,\xi_k); k\leq n\}$  and $\{ \tilde{\eta}_b(\lambda_k,\xi_k); k\leq n\}$ above are i.i.d, due to definition \eqref{def: eta_b} and the fact that $B, \widetilde{B}$ are two independent Brownian motion. Hence relation~\eqref{eq: nth moment of bar Z_b^deta} can be written as
\begin{align}\label{eq: nth moment of bar Z_b^deta 1}
\E\left[\left(\bar{Z}_b^\delta\right)^n\right]=\int_{(\R^{d+1})^n}\Big\vert \E\Big[\prod_{k=1}^n\eta_b(\lambda_k, \xi_k)\Big]\Big\vert^2\bar{\mu}_0^\delta(d\lambda)\mu(d\xi)
.\end{align}
Note that from the \lastchange{right-hand side} of the above identity, the $n$-th moment of $\bar{Z}_b^\delta$ is non-negative. Plugging inequality \eqref{eq: bound bar mu_0^delta by hat mu_0^delta} in the above identity, we have
\begin{align}\label{eq: nth moment of bar Z_b^deta 2}
\E\left[\left(\bar{Z}_b^\delta\right)^n\right]
\le \delta^{\frac{\hat{\alpha}_0-\alpha_0}{2}n}
\int_{(\R^{d+1})^n}\Big\vert\E\Big[\prod_{k=1}^n\eta_b(\lambda_k, \xi_k)\Big]\Big\vert^2\hat{\mu}_0(d\lambda)\mu(d\xi).
\end{align}

In order to bound the \lastchange{right-hand side} of \eqref{eq: nth moment of bar Z_b^deta 2}, we first go back to the definition \eqref{def: eta_b} of $\eta_b$ and set $\lambda:=b^{-1}\lambda$ therein. We let the patient reader check that the scaling can be read in~\eqref{eq: nth moment of bar Z_b^deta 2} as
\begin{multline}\label{eq: nth moment of bar Z_b^deta 3}
\E\left[\left(\bar{Z}_b^\delta\right)^n\right] 
\le \delta^{\frac{\hat{\alpha}_0-\alpha_0}{2}n} \,b^{n\hat{\alpha}_0}
\int_{(\R^{d+1})^n}\Big\vert\E\Big[\prod_{k=1}^n\int_0^b e^{i(\lambda_k s_k+\xi_k\cdot \widetilde{B}_{s_k})}ds_k\Big]\Big\vert^2\hat{\mu}_0(d\lambda)\mu(d\xi).
\end{multline}
Next we notice that the \lastchange{right-hand side} of \eqref{eq: nth moment of bar Z_b^deta 3} is upper bounded in \cite[relation (3.1)]{Ch18}. Indeed, with the correspondence $\hat{H}_0\equiv2^{-1}(2-\hat{\alpha}_0)$ and with relation \eqref{eq:sko-critical-regime} in mind,  we choose $\hat{\alpha}_0>\alpha_0$ such that  $4(1-\hat{H}_0)+(d_*-2H_*)<2$. Then a direct application of  \cite[relation (3.1)]{Ch18} yields 
\begin{align}\label{eq: nth moment of bar Z_b^deta 4}
&\E\left[\left(\bar{Z}_b^\delta\right)^n\right]
\le C^n \delta^{\frac{(\hat{\alpha}_0-\alpha_0)n}{2}}\,(n!)\,b^{n\hat{\alpha}_0}\,b^{n(1-\hat{\alpha}_0)}=C^n(n!)\,\delta^{\frac{(\hat{\alpha}_0-\alpha_0)n}{2}}b^n.
\end{align}
Starting from this inequality we can easily get a similar bound for $\E[|\bar{Z}_b^\delta|^n]$ by changing the constant $C$ in the \lastchange{right-hand side} of \eqref{eq: nth moment of bar Z_b^deta 4}.  Namely,  when $n$ is even, note that $\E[(\bar{Z}_b^\delta)^n]=\E[|\bar{Z}_b^\delta|^n]$ due to the fact that our measures $\mu$ and $\mu_0$ are symmetric. For any odd number $n=2k+1$, we just invoke the Cauchy-Schwartz
inequality, which yields
$$
\E\left[\vert\bar{Z}_b^\delta\vert^{2k+1}\right]\le \Big(\E\left[\left(\bar{Z}_b^\delta\right)^{2k}\right]\Big)^{1/2}
\Big(\E\left[\left(\bar{Z}_b^\delta\right)^{2k+2}\right]\Big)^{1/2}.
$$
We let the reader check that this slight elaboration yields \eqref{eq: nth moment of bar Z_b^deta 4} with $\E[|\bar{Z}_b^\delta|^n]$ on the \lastchange{left-hand side}.
Therefore, inserting \eqref{eq: nth moment of bar Z_b^deta 4} into a Taylor expansion for $x\mapsto e^x$, we get that, \lastchange{for some constant $C>0$,}
\begin{align}\label{integrability bar Z^delta_b}
Q\equiv
\sup_{b\ge 1}
\E\lc\exp\lp\frac{|\bar{Z}_b^\delta|}{C\delta^{\frac{\hat{\alpha}_0-\alpha_0}{2}}b}
\rp \rc
<\infty.
\end{align}

We can now go back to our claim \eqref{s-1}. Indeed, plugging \eqref{integrability bar Z^delta_b} into a standard application of Chebyshev's inequality, we obtain
\begin{align*}
\PP\left\{|\bar{Z}_b^\delta|^{1/2}\geq \epsilon b\right\}=\PP\left\{|\bar{Z}_b^\delta|\geq \epsilon^2b^2\right\}\leq Q\exp\left(-\frac{\epsilon^2b}{C\delta^{(\hat{\alpha}_0-\alpha_0)/2}}\right),
\end{align*}
from which \eqref{s-1}  is easily deduced by picking a small enough $\delta$. Taking the decomposition $Z_b=Z_b^\delta+\bar{Z}_b^\delta$ given by \eqref{ldp-5'}, our objective \eqref{c-5} is thus reduced to show the following bound for all $u, \delta>0$ (see e.g. \cite[Lemma 1.2.15]{DZ} for a general result yielding a proper identification of exponentially negligible terms),
\begin{align}\label{c-6}
\limsup_{b\to\infty}\frac{1}{b}\log\PP\big\{|Z_b^\delta|^{1/2}\ge ub\big\}
\le -\frac{u^2}{\kappa^{4}}.
\end{align}

\noindent{\it Step 3: Cutoff procedure in space.} In order to prove \eqref{c-6}, we further decompose $Z_b^\delta$ as follows.
\begin{align}\label{decomp: space cutoff}Z_b^\delta=Z^{\delta,M}_b+\bar{Z}^{\delta,M}_b,\end{align}
where $M\geq 1$ is an additional parameter and $Z_b^{\delta,M}, \bar{Z}_b^{\delta,M}$ are respectively defined by
\begin{align}\label{Z^delta,M}
Z_b^{\delta, M}=\int_{\R\times[-M, M]^d}\eta_b(\lambda,\xi)
\tilde{\eta}_b(\lambda,\xi)\mu^\delta_0(d\lambda)\mu(d\xi),
\end{align}and
\begin{align}\label{bar Z^delta,M}
\bar{Z}_b^{\delta, M}=\int_{\R\times([-M, M]^d)^c}\eta_b(\lambda,\xi)
\tilde{\eta}_b(\lambda,\xi)\mu^\delta_0(d\lambda)\mu(d\xi).
\end{align}
Similarly to the previous step, we will now prove that there exists $l>0$ such that
\begin{align}\label{space cutoff}
\limsup_{b\to\infty}\frac{1}{b}
\log\PP\bigg\{|\bar{Z}_b^{\delta,M}|^{1/2}\ge \epsilon b\bigg\}
\le -l.
\end{align}
To this aim we will first upper bound the moments of $\bar{Z}_b^{\delta,M}$ as in \eqref{eq: nth moment of bar Z_b^deta 4}. Namely, along the same lines as for \eqref{eq: nth moment of bar Z_b^deta 1} we have
\begin{align}\label{eq: space cutoff-nth moment }
\E\left[\left(\bar{Z}_b^{\delta,M}\right)^n\right]=\int_{(\R\times([-M,M]^d)^c)^n}\Big\vert \E\Big[\prod_{k=1}^n\eta_b(\lambda_k, \xi_k)\Big]\Big\vert^2{\mu}_0^\delta(d\lambda)\mu(d\xi),
\end{align}
which shows in particular that $\E[(\bar{Z}_b^{\delta,M})^n]\geq 0$ for all $n\geq 1$. Furthermore, recall from \eqref{def: eta_b} that 
\begin{align*}
\E\Big[\prod_{k=1}^n\eta_b(\lambda_k,\xi_k)\Big]&=\E\Big[\int_{[0,b]^n}\prod_{k=1}^n
e^{-(i\lambda_k b^{-1} s_k+\xi_k\cdot B_{s_k})}ds\Big]
=\int_{[0,b]^n} e^{-i\lambda_k b^{-1} s_k}
\E\Big[\prod_{k=1}^n e^{-i\xi_k\cdot B_{s_k}}\Big]ds.
\end{align*}
One can then trivially bound the terms $|e^{-i\lambda_ks_k}|$ by $1$ in the \lastchange{right-hand side} above and resort to the fact that $\E\left[\prod_{k=1}^ne^{-i\xi_kB_{s_k}}\right]\geq 0.$ This yields
\begin{align*}
\Big|\E\Big[\prod_{k=1}^n\eta_b(\lambda_k,\xi_k)\Big]\Big|\leq \int_{[0,b]^n}\E\Big[\prod_{k=1}^ne^{-i\xi_k\cdot B_{s_k}}\Big]ds.
\end{align*}
Reporting this inequality into \eqref{eq: space cutoff-nth moment } we get
\begin{align}\label{eq: bound bar Z^delta M by H}
&\E\big[(\bar{Z}_b^{\delta,M})^n\big]\leq\left[\mu_0^\delta(\R)\right]^n\E\left[\big({\mathcal H}^c_b(M)\big)^n\right],
\end{align}
where we have set
\begin{align}\label{def: H_b^c(M)}
{\mathcal H}_b^c(M)=\int_{([-M, M]^d)^c}\left(\int_0^b
e^{i\xi\cdot B_s}ds\right)\left(\int_0^b
e^{-i\xi\cdot \widetilde{B}_s}ds\right)\mu(d\xi).
\end{align}

We now wish to apply subadditivity properties of ${\mathcal{H}}_b^c(M)$, such as \eqref{sub-2}, in order to obtain relation  \eqref{space cutoff}. A first step in this direction is to apply Chebyshev's inequality, which asserts that for all $\epsilon, b>0$ and  $k\geq 1$ we have
\begin{align}\label{space cutoff midstep}
(\epsilon^2b^2N)^{2k}\,\PP\bigg\{|\bar{Z}_b^{\delta,M}|^{1/2}\ge \epsilon b\bigg\}
\le N^{2k}
\E\Big[|\bar{Z}_b^{\delta, M}|^{2k}\Big]=N^{2k}
\E\Big[(\bar{Z}_b^{\delta, M})^{2k}\Big],
\end{align}
where we have introduced an additional parameter $N\geq 0$ to be specified later on.
We sum inequality \eqref{space cutoff midstep}  over $k$ and resort to the elementary inequality 
$$\frac{1}{2}\exp(x)\leq \frac{e^x+e^{-x}}{2}= \sum_{k=0}^\infty {x^{2k}}/{(2k)!},
$$ 
in order to get
\begin{align*}
\frac{1}{2}e^{\epsilon^2b^2N}\PP\left\{|\bar{Z}_b^{\delta,M}|^{1/2}\geq \epsilon b\right\}&\leq\sum_{k=0}^\infty\frac{(\epsilon^2b^2N)^{2k}}{(2k)!}\PP\left\{|\bar{Z}_b^{\delta,M}|^{1/2}\geq \epsilon b\right\}
\leq\sum_{k=0}^\infty\frac{N^{2k}}{(2k)!}\E\Big[(\bar{Z}_b^{\delta, M})^{2k}\Big].
\end{align*}
Therefore invoking the fact that $\E\big[(\bar{Z}_b^{\delta, M})^{k}\big]\geq 0$ for all $k\geq 0$, we have
\begin{align*}
\frac{1}{2}\exp\{\epsilon^2b^2N\}\PP\left\{|\bar{Z}_b^{\delta,M}|^{1/2}\geq \epsilon b\right\}
&\leq\sum_{k=0}^\infty\frac{N^{k}}{k!}\E\Big[(\bar{Z}_b^{\delta, M})^{k}\Big],
\end{align*}
and owing to inequality \eqref{eq: bound bar Z^delta M by H}, the above becomes
\begin{align*}
\frac{1}{2}\exp\{\epsilon^2b^2N\}\PP\left\{|\bar{Z}_b^{\delta,M}|^{1/2}\geq \epsilon b\right\}
&\leq\sum_{k=0}^\infty\frac{(N\mu_0^\delta(\R))^{k}}{k!} \E\left[\big({\mathcal H}^c_b(M)\big)^k\right],
\end{align*}
where we recall that ${\mathcal H}_b^c(M)$ is defined by \eqref{def: H_b^c(M)}.
Summarizing our considerations for this step,  we have found that for all $\epsilon, b>0$ and $N>0$ we have
\begin{align*}
\PP\left\{|\bar{Z}_b^{\delta,M}|^{1/2}\geq \epsilon b\right\}
&\leq{2}\exp\{-\epsilon^2b^2N\}\sum_{k=0}^\infty\frac{(N\mu_0^\delta(\R))^{k}}{k!} \E\left[\big({\mathcal H}^c_b(M)\big)^k\right]\\
&={2}\exp\{-\epsilon^2b^2N\}\sum_{k=0}^\infty\frac{(bN\mu_0^\delta(\R))^{k}}{k!} \left\{\frac{1}{b^k}\,\E\left[\big({\mathcal H}^c_b(M)\big)^k\right]\right\}.
\end{align*}

We can now apply Lemma \ref{sub} in the following way: we set $\theta=bN\mu_0^\delta(\R)$ and we assume that $b$ is an integer (generalizations to an arbitrary positive $b$ are left to the reader). Then iterating \eqref{sub-2} $b$ times and writing $R=Nb$ we end up with
\begin{align}\label{space cutoff midstep 1}
\PP\left\{|\bar{Z}_b^{\delta,M}|^{1/2}\geq \epsilon b\right\}\leq {2}\exp\{-\epsilon^2bR\}\left(\mathcal{A}^\delta(M,\lastchange{R})\right)^b,
\end{align}
where the quantity $\mathcal{A}^\delta(M,\lastchange{R})$ is defined by
\begin{equation}\label{c7}
\mathcal{A}^\delta(M,\lastchange{R})=\sum_{k=0}^\infty\frac{(R\mu_0^\delta(\R))^{k}}{k!} \E\left[\big({\mathcal H}^c_1(M)\big)^k\right].
\end{equation}
Finally, one can use the dominated convergence theorem to  show that the \lastchange{right-hand side} of relation~\eqref{space cutoff midstep 1} satisfies, \lastchange{for every $R>0$ small enough,}
\begin{align}\label{eq: dct result}
\lim_{M\to\infty}\mathcal{A}^\delta(M,\lastchange{R})=1.
\end{align}
Indeed, first note that the bound (3.1) in \cite{Ch18}, already used for our relation \eqref{eq: nth moment of bar Z_b^deta 4},  can be extended to the case when $H_0=1$ (i.e., the setting without time dependence). A direct application of this bound (or generalization of \eqref{eq: nth moment of bar Z_b^deta 4} to $\alpha_0=\hat{\alpha}_0=0$ and $b=1$) yields that for all $k\geq 1$ we have
$$\E \big[\big({\mathcal H}_1^c(M)\big)^k\big]\leq\E \big[\big({\mathcal H}_1(\R)\big)^k\big]\leq C^k k!,$$
where we have set
\begin{align*}
{\mathcal H}_1(\R)=\int_{\R}\left(\int_0^b
e^{i\xi\cdot B_s}ds\right)\left(\int_0^b
e^{-i\xi\cdot \widetilde{B}_s}ds\right)\mu(d\xi).
\end{align*}
Hence one can choose $R$ small enough such that the following domination of the general term of \eqref{eq: dct result} holds true,
\begin{align}\label{space cutoff midstep 2}
\mathcal{A}^\delta(M,\lastchange{R})
\leq
\sum_{k=0}^\infty\frac{(R\mu_0^\delta(\R))^{k}}{k!} \E\left[\big({\mathcal H}_1(\R)\big)^k\right]
\leq
\sum_{k=0}^\infty \lp C R\mu_0^\delta(\R)  \rp^{k}
<\infty.
\end{align}
Moreover, invoking relation \eqref{def: H_b^c(M)} it is readily checked that the $k$-th moment of $\mathcal{H}_1^c(M)$ in~\eqref{c7} is such that
$$\E \big[\big({\mathcal H}_1^c(M)\big)^k\big]=\int_{([-M,M]^d)^c)^k}\Big|\int_{[0,1]^k}\E\Big[\prod_{i=1}^k e^{i\xi_k\cdot B_{s_k}}\Big] ds\Big|^2 \mu(d\xi).$$
Thus the mapping $M\mapsto \E \big[\big({\mathcal H}_1^c(M)\big)^k\big]$ is monotone and decreasing.  
As a direct consequence, relation \eqref{eq: dct result} follows by dominated convergence.

Let us now turn to our partial objective \eqref{space cutoff}. Namely recast relation \eqref{space cutoff midstep 1} as
$$\frac{1}{b}\log\PP\left\{|\bar{Z}_b^{\delta,M}|^{1/2}\geq \epsilon b\right\}\leq \frac{\log 2}{b}-\epsilon^2 R+\log \mathcal{A}^\delta(M,\lastchange{R}).$$
In the \lastchange{right-hand side} above, one can fix $R>0$ small enough and then take $M$ large enough so that 
$$\log \mathcal{A}^\delta(M,\lastchange{R})\leq {\epsilon^4}.$$
Therefore for $\epsilon$ small enough we get
$$\frac{1}{b}\log\PP\left\{|\bar{Z}_b^{\delta,M}|^{1/2}\geq \epsilon b\right\}\leq \frac{\log 2}{b}-\frac{1}{2}\epsilon^2R,$$
from which \eqref{space cutoff} is easily deduced. 

Summarizing our considerations from this step, having \eqref{c-6}, \eqref{decomp: space cutoff} and \eqref{space cutoff} in mind we get that our claim \eqref{c-5} is achieved as soon as we can prove 
\begin{align}\label{last probability deduction}
\limsup_{b\to\infty}\frac{1}{b}\log\PP\left\{|{Z}_b^{\delta,M}|^{1/2}\geq u b\right\}\leq -\frac{u^2}{\kappa^4},
\end{align}
where we recall that $Z_b^{\delta,M}$ is defined by \eqref{Z^delta,M}.

\smallskip
\noindent{\it Step 4: Expression in terms of the moments of $Z_b^{\delta,M}$.}  Thanks to a standard use of Chebyshev's inequality, relation \eqref{last probability deduction} is achieved as long as we can prove the following Gaussian type bound for $|Z_b^{\delta,M}|^{1/2}$,
\begin{align}\label{c-9}
\limsup_{b\to\infty}\frac{1}{b}\log
\E\lc \exp\lp\theta|Z_b^{\delta,M}|^{1/2}\rp \rc
\le \frac{\kappa^4\theta^2}{4},
\end{align}
which should be valid for all $\theta\geq 0$.  In order to separate the variables $\eta_b$ and $\tilde{\eta}_b$ in the definition \eqref{Z^delta,M} of $Z_b^{\delta,M}$ we first apply Cauchy-Schwarz inequality and then the elementary inequality $2\sqrt{ab}\leq a+b$ for all $a,b\geq0$. We get
\begin{multline*}
|Z_b^{\delta,M}|^{1/2}
\le {\frac12}
\Bigg\{\bigg(
\int_{\R\times[-M, M]^d}\vert\eta_b(\lambda,\xi)\vert^2\mu^\delta_0(d\lambda)
\mu(d\xi)\bigg)^{1/2} \\
+\bigg(
\int_{\R\times[-M, M]^d}\vert\tilde{\eta}_b(\lambda,\xi)\vert^2
\mu^\delta_0(d\lambda)
\mu(d\xi)\bigg)^{1/2}\Bigg\}.
\end{multline*}
Taking into account the fact that $\eta$ and $\tilde{\eta}$ are independent, this entails
\begin{align}\label{eq: moment estimate Z^delta,M midstep 1}
\E\lc\exp\left(\theta|Z_b^{\delta,M}|^{1/2}\right)\rc
\le\left\{\E\lc\exp\left(\frac{\theta}{2}
|X_b^{\delta,M}|^{1/2}\right)\rc\right\}^2,
\end{align}
where the random variable $X_b^{\delta,M}$ is defined by
\begin{align}\label{def: X_b^delta M}X_b^{\delta,M}=\int_{\R\times[-M, M]^d}\vert\eta_b(\lambda,\xi)\vert^2
\mu^\delta_0(d\lambda)\mu(d\xi)
.\end{align}
Putting together \eqref{c-9} and \eqref{eq: moment estimate Z^delta,M midstep 1} and setting $\theta=2\, \theta$, we are reduced to prove
\begin{align}\label{c-10}
\limsup_{b\to\infty}\frac{1}{b}\log
\E\lc\exp\left(\theta |X_b^{\delta,M}|^{1/2}\right)\rc
\le \frac{\kappa^4\theta^2}{2}.
\end{align}
In addition, condition \eqref{c-10} can be expressed in terms of the moments of $X_b^{\delta,M}$. Indeed, taking into account the fact that $X_b^{\delta,M}$ is a positive random variable, a direct application of \cite[Lemma 1.2.6]{Ch-bk} asserts that \eqref{c-10} is equivalent to the following property:
\begin{align}\label{s-3}
\limsup_{b\to\infty}\frac{1}{b}\log\sum_{n=0}^\infty \frac{\theta^n}{n!}
\Big(\E\lc (X_b^{\delta,M})^n\rc\Big)^{1/2}
\le {\kappa^4\theta^2}.
\end{align}
We will use the formulation \eqref{s-3} below in order to replace the Brownian motion $B$ in $X_b^{\delta,M}$ by a Ornstein-Uhlenbeck process.

\smallskip
\noindent{\it Step 5: Expression in terms of an Ornstein-Uhlenbeck process.} 
Similarly to a strategy borrowed from \cite{HLN},
we now introduce a family of $\R^d$-valued Ornstein-Uhlenbeck processes indexed by $\alpha>0$, denoted by $B^\alpha$. The process $B^\alpha$ solves the equation 
\begin{align}\label{eq: O-U process}
dB_t^\alpha=-\alpha B_t^\alpha dt+dB_t,
\end{align}
where $B$ is our standing $d$-dimensional Wiener process. Now notice that $B$ can also be seen as an Ornstein-Uhlenbeck type process of the form
$$dB_t=-\alpha B_tdt+dW_t,\quad\text{with}\quad W_t=B_t+\alpha\int_0^tB_sds.$$
Therefore, if we denote by $\PP^\alpha$ the law of $B^\alpha$, a standard application of Girsanov's theorem yields
\begin{align}\label{eq: Girsanov}
\frac{d\PP^{\alpha}}{d\PP}\bigg\vert_{[0,\tau]}
&=\exp\bigg(-\alpha\int_0^\tau B_s\cdot dB_s-\frac{\alpha^2}{2}\int_0^\tau
\vert B_s\vert^2ds\bigg).
\end{align}
In addition, It\^{o}'s formula applied to $B$ entails that for any $\tau>0$ we have
$$|B_\tau|^2=2\int_0^\tau B_s\cdot dB_s+\tau d.$$
Therefore the exponential term in \eqref{eq: Girsanov} can be recast as
\begin{align}\label{ldp-12}
\frac{d\PP^{\alpha}}{d\PP}\bigg\vert_{[0,\tau]}=\exp\bigg(\frac{1}{2}\left({\alpha}\tau d-{\alpha}\vert B_\tau\vert^2
-{\alpha^2}\int_0^\tau
\vert B_s\vert^2ds\right)\bigg).
\end{align}
In particular, it is readily checked that
\begin{align}\label{ldp-13}
\frac{d\PP^{\alpha}}{d\PP}\bigg\vert_{[0,\tau]}\le \exp\Big(\frac{1}{2}\alpha  \tau d
\Big).
\end{align}
Let us also recall a moment comparison inequality which is obtained in \cite[relation (6.20)]{Ch18}. Namely for $n\geq1$ and the random variables $X_b^{\delta,M}$ defined by \eqref{def: X_b^delta M} we have
$$
\begin{aligned}
\E\big[(X_b^{\delta,M})^n\big]\leq \E^\alpha\big[(X_b^{\delta,M})^n\big],
\end{aligned}
$$
for all $\alpha>0$, where $\E^\alpha$ denotes the expectation under $\PP^\alpha$. Hence in order to prove \eqref{s-3} it will be enough to show a uniform bound in $\alpha$, namely
\begin{align*}
\limsup_{\alpha\to0}\limsup_{b\to\infty}\frac{1}{b}\log\sum_{n=0}^\infty \frac{\theta^n}{n!}
\Big(\E^\alpha\lc (X_b^{\delta,M})^n\rc\Big)^{1/2}
\le {\kappa^4\theta^2}.
\end{align*}
Therefore, invoking again the equivalence between \eqref{c-10} and \eqref{s-3} for positive random variables, we are reduced to prove
\begin{align}\label{reduction to O-U}
\limsup_{\alpha\to 0}\limsup_{b\to\infty}\frac{1}{b}\log
\E^\alpha\left[
\exp\left({\theta}|X_b^{\delta,M}|^{1/2}\right)\right]
\le \frac{\kappa^4\theta^2}{2}
\end{align}
for every $\theta>0$.  We now focus on inequality \eqref{reduction to O-U} for a generic $\theta>0$.

\smallskip
\noindent{\it Step 6: Space-time cutoff for the Ornstein-Uhlenbeck process.}  Let us introduce an additional parameter $N>0$ and write
$$X_b^{\delta,M}=X_b^{\delta,M,N}+\bar{X}_b^{\delta,M,N},$$
with
\begin{align}\label{def: X_b^delta M N}
&X_b^{\delta,M,N}=\int_{[-N,N]\times[-M,M]^d}|\eta_b(\lambda,\xi)|^2\mu_0^\delta(d\lambda)\mu(d\xi),\\
&\bar{X}_b^{\delta,M,N}=\int_{[-N,N]^c\times[-M,M]^d}|\eta_b(\lambda,\xi)|^2\mu_0^\delta(d\lambda)\mu(d\xi).\nonumber
\end{align}
Trivially bounding the oscillating exponential terms by $1$ in the definition \eqref{def: eta_b} of $\eta_b(\lambda,\xi)$, we obtain
\begin{align}\label{eq:bound bar X_b^delta M N}|\bar{X}_b^{\delta,M,N}|\leq b^2\mu_0^\delta([-N,N]^c)\mu([-M,M]^d).\end{align}
Therefore thanks to the fact that $\delta>0$, the \lastchange{right-hand side} of \eqref{eq:bound bar X_b^delta M N} can be made as small as desired by picking $N$ large enough. Similarly to what has been done in Step 2, one can thus prove an inequality of the same form as \eqref{s-1} for $\bar{X}_b^{\delta,M,N}$. We are now reduced to show that \eqref{reduction to O-U} holds true with $X_b^{\delta,M}$ replaced by $X_b^{\delta,M,N}$.

Next we consider a new parameter $K>0$ and we fix a value $\alpha>0$. We further decompose $X_b^{\delta,M,N}$ and write
\begin{align}\label{decomp X_b^delta M B}
X_b^{\delta,M,N}=X_b^{\delta,M,N} \1_{\Omega_{b,K}}+X_b^{\delta,M,N} \1_{\Omega_{b,K}^c},
\end{align}
where
\begin{equation*}
\Omega_{b, K}=\bigg\{\frac{1}{b}\int_0^b\vert B_s\vert ds\, {\le}\, K\bigg\}.
\end{equation*}
We will prove that for $K$ large enough, the quantity $X_b^{\delta,M,N}\1_{\Omega_{b,K}^c}$ is negligible with respect to $X_b^{\delta,M,N}\1_{\Omega_{b,K}}$. Indeed, resorting to  (\ref{ldp-12}) we have,
$$
\begin{aligned}
&\E^\alpha\left[\exp\bigg(\int_0^b\vert B_s\vert ds\bigg)\right]
\le\E\exp\bigg\{\int_0^b\Big(\vert B_s\vert
-\frac{\alpha^2}{2}\vert B_s\vert^2\Big)ds +\frac{\alpha d}{2}b\bigg\}.
\end{aligned}
$$
Now we can use the elementary inequality $x-\alpha^2x^2/2\leq 1/2\alpha^2$, valid for all $x>0$, in order to get
$$
\begin{aligned}
\lastchange{\E^\alpha\left[\exp\bigg(\int_0^b\vert B_s\vert ds\bigg)\right]}
\le  e^{C_\alpha b},\quad\text{where}\quad C_\alpha=\frac{1}{2\alpha^2}+\frac{\alpha d}{2}.
\end{aligned}
$$
Hence given $l>0$ one can choose $K>0$ large enough (say $K=l+2C_\alpha$) such that uniformly in $b$ we have
\begin{align}\label{est Omega}
\PP^\alpha(\Omega_{b,K}^c)
\le e^{-lb},
\end{align}
where we recall that $\Omega_{b,K}$ is defined by \eqref{decomp X_b^delta M B}.  In addition, one can trivially bound the quantity $|\eta_b(\lambda,\xi)|^2$ in the definition \eqref{def: X_b^delta M N} of $X_b^{\delta,M,N}$ by $b^2$. This yields
\[
\E^\alpha\left[\exp\left(\theta|X_b^{\delta,M,N}|^{1/2}\right)\1_{\Omega_{b,K}^c}\right]\leq \exp\left(C_{\delta,M,N}^{1/2}\theta b-l b\right),
\]
where $l$ above can be made arbitrarily large. In conclusion of this step, it is enough to prove that for all $\theta, \delta, M, N>0$, we have
\begin{align}\label{c-11}
\limsup_{\alpha\to 0}\limsup_{b\to\infty}\frac{1}{b}\log
\E^\alpha
\left[\exp\left({\theta}
|X_b^{\delta,M,N}|^{1/2}\right)\1_{\Omega_{b,K}}\right]
\le \frac{\kappa^4\theta^2}{2}.
\end{align}

\smallskip
\noindent{\it Step 7: Linearization procedure.}  One way to recast the definition \eqref{def: X_b^delta M N} of $|X_b^{\delta,M,N}|^{1/2}$ is to write 
\begin{align}\label{eq: X in terms of G norm}
|X_b^{\delta,M,N}|^{1/2}=\|\eta_b\|_{\mathcal G},
\end{align}
where the functional space $\cg$ is defined by
\begin{align}\label{def: G}
{\mathcal G}=\left\{ h\in L^2([-N,N]\times[-M,M]^d; \mu_0^\delta\otimes\mu); h(-\lambda,-\xi)=\bar{h}(\lambda,\xi)\right\}.\end{align}
In this step we show how to linearize the ${\mathcal G}$-norm above when $\omega\in\Omega_{b,K}$. To this aim, going back to \eqref{def: eta_b}, notice that for $\lambda_1, \lambda_2\in\R$ and $\xi_1,\xi_2\in\R^d$ we have
\begin{align*}
&|\eta_b(\lambda_2,\xi_2)-\eta_b(\lambda_1,\xi_1)|
\leq\int_0^b|e^{\ii(b^{-1}\lambda_2+\xi_2\cdot B_s)}-e^{i(b^{-1}\lambda_1+\xi_1\cdot B_s)}|ds\\
&\leq |\lambda_2-\lambda_1|+\left(\int_0^b|B_s|ds\right)|\xi_2-\xi_1|
\leq|\lambda_2-\lambda_1|+Kb|\xi_2-\xi_1|,
\end{align*}
where we have invoked the fact that $\omega\in\Omega_{b,K}$, with  $\Omega_{b,K}$ given by \eqref{decomp X_b^delta M B}, for the last inequality. Therefore, setting
\begin{multline}
{\mathcal C}=\Big\{h(\cdot,\cdot)\in {\mathcal G};\hskip.1in 
\vert h(\lambda, \xi)\vert\le 1,\ 
\vert h(\lambda_1,\xi_1)-h(\lambda_2,\xi_2)\vert
\le (1+K)\vert (\lambda_1,\xi_1)-(\lambda_2,\xi_2)\vert\Big\},
\end{multline}
it is easily seen that $b^{-1}\eta_b\in{\mathcal C}$ for all $b\geq1$, whenever $\omega\in\Omega_{b,K}$. Moreover, some standard uniform continuity arguments show that the closure ${\mathcal K}$ of ${\mathcal C}$ in ${\mathcal G}$ is a compact set.  We have thus proved that for all $b\geq1$ and $\omega\in\Omega_{b,K}$ we have
$$b^{-1}\eta_b\in{\mathcal K}.$$

Our next step will be to construct a finite cover of ${\mathcal K}$. To this aim consider the unit ball in ${\mathcal G}$, denoted by $B_{\mathcal G}(0,1)$. We also consider the following set for any $f\in B_{\mathcal G}(0,1)$:
\begin{align}\label{def: O_f}
{\mathcal O}_f=\{h\in {\mathcal G};\hskip.1in 
\langle f, h\rangle_{\mathcal G}> \|h\|_{\mathcal G}-\epsilon\}.
\end{align}
Then we claim that the family $\{{\mathcal O}_f; f\in B_{\mathcal G}(0,1)\}$ covers ${\mathcal K}$. 
Indeed, consider $f\in B_{\mathcal G}(0,1)$ such that $\|f\|_{\cg}=1$ and a constant $a>0$. We set $h=a f$. Then it is readily checked that $\langle f, \, h\rangle_{\cg}=\|h\|_{\cg}$. Therefore we get
\begin{equation*}
\ck
\subset
\lcl  a f ; \, \|f\|_{\cg}=1 , \, a >0 \rcl
\subset
\lcl {\mathcal O}_f; f\in B_{\mathcal G}(0,1) \rcl .
\end{equation*}
Otherwise stated, $\{{\mathcal O}_f; f\in B_{\mathcal G}(0,1)\}$ covers ${\mathcal K}$. In the sequel we will extract a finite family $\{{\mathcal O}_{f_i}, 1\leq i\leq m\}$ which still covers $\mathcal K$, which is possible since $\mathcal K$ is a compact set.

Let us now go back to the random function $b^{-1}\eta_b$, seen as an element of $\mathcal K$. From the previous consideration we know that for almost every $\omega\in\Omega_{b,K}$ there exists $j=j(\omega)\in\{1,\ldots ,m\}$ such that $\lastchange{b^{-1}}\eta_b\in{\mathcal O}_{f_j}$. Hence from the very definition \eqref{def: O_f} of ${\mathcal O}_{f_j}$, we get
$$
\|b^{-1}\eta_b(\cdot, \cdot)\|_{\lastchange{\mathcal G}}\le \epsilon+ 
\max_{1\le j\le m}\langle f_j, {b^{-1}\eta_b}\rangle_{\lastchange{\mathcal G}}
$$
Plugging this inequality into \eqref{eq: X in terms of G norm} we end up with
\begin{multline*}
\E^\alpha\left[
\exp\left({\theta}
|X_b^{\delta,M,N}|^{1/2}\right)\1_{\Omega_{\lastchange{b},K}}\right]
=
\E^\alpha\left[\exp\left({\theta}
\|\eta_b\|_{\mathcal G}\right)\1_{\Omega_{\lastchange{b},K}}\right]\\
\leq\E^\alpha\left[\exp\left(\lastchange{\theta}\max_{1\leq j\leq m}
\langle f_j, \eta_b\rangle_{\mathcal G}+b\theta\epsilon\right)\right]  
\le {e^{b\theta\epsilon}} \sum_{j=1}^m\E^\alpha\left[\exp\left({\theta}
\langle f_j, \eta_b\rangle_{\mathcal G}\right)\right].
\end{multline*}
Hence some elementary properties of the logarithmic function (see e.g \cite[Lemma 1.2.15]{DZ}) entail
\begin{multline}\label{final reduction}
\limsup_{b\to\infty}\frac{1}{b}\log \E^\alpha\left[
\exp\left({\theta}
|X_b^{\delta,M,N}|^{1/2}\right)\1_{\Omega_{\lastchange{b},K}}\right]\\
\le {\theta\epsilon} +\max_{1\le j\le m}\left\{
\limsup_{b\to\infty}\frac{1}{b}
\log\left(\E^\alpha\lc \exp\left({\theta} \langle f_j, \eta_b\rangle_{\mathcal G}\rc \right)\right)\right\}.
\end{multline}
We will now treat each term in the \lastchange{right-hand side} of \eqref{final reduction} separately.

\smallskip
\noindent{\it Step 8: Feynman-Kac type asymptotics.} Due to the fact that $\mathcal G$ is a subspace of $L^2([-N,N]\times[-M,M]^d: \mu_0^\delta\otimes\mu)$, for $j=1,\ldots ,m$ we have
$$
\begin{aligned}
\langle f_j, \eta_b\rangle_{\mathcal G}&=\int_{[-N, N]\times[-M, M]^d}
f_j(\lambda,\xi){\bar{\eta}_b(\lambda,\xi)}\lastchange{\mu_0^\delta}(d\lambda)\mu(d\xi).
\end{aligned}
$$
Recalling once again the definition \eqref{def: eta_b} of $\eta_b$, this yields
$$
\begin{aligned}
\langle f_j, \eta_b\rangle_{\mathcal G}=\int_0^b\tilde{f}_{j}\Big(\frac{s}{b}, B_s\Big)ds,
\end{aligned}
$$
where the functions $\tilde{f}_{j}$ are given by  the following relation,
\begin{align}\label{def: bar f}
\tilde{f}_{j}(s, x)=\int_{[-N, N]\times[-M, M]^d}{f_j(\lambda, \xi)}
e^{\ii\left(\lambda s+\xi\cdot x\right)}
\lastchange{\mu_0^\delta}(d\lambda)\mu(d\xi).
\end{align}
Also notice that according to the definition \eqref{def: G} of $\mathcal G$ we have $f_j(\lambda, \xi)=\lastchange{\overline{f_{j}}}(-\lambda, -\xi)$ for all $j=1,\ldots ,m, \lambda\in\R$ and $\xi\in\R^d$. With the expression \eqref{def: bar f} in mind, this yields that $\tilde{f}_{j}$ is real-valued. Thus inequality (\ref{ldp-13}) entails
\begin{align}\label{new-2}
\E^\alpha\lc \exp\left({\theta}\langle f_j, \eta_b\rangle_{\mathcal G}\right) \rc
\le \exp\Big(\frac{\lastchange{\al} d}{2}b\Big)\E\left[\exp\bigg({\theta}
\int_0^b\tilde{f}_{j}\Big(\frac{s}{b}, B_s\Big)ds\bigg)\right],
\end{align}
where the expectation on the \lastchange{right-hand side} is now taken with respect to a $\R^d$-valued Brownian motion $B$.

With \eqref{new-2} in hand, we are now back to a more classical Feynman-Kac computation. Specifically, $\tilde{f}_{j}$ can be seen as the Fourier transform of a finite and compactly supported measure, as is apparent from \eqref{def: bar f}. Therefore it fulfills all the regularity assumptions allowing to apply \cite[Proposition 3.1]{CHSX}. Applying this proposition we thus get
\begin{align}\label{new-3}
&\lim_{b\to\infty}\frac{1}{b}\log\E\left[\exp\bigg({\theta}
\int_0^b\tilde{f}_{j}\Big(\frac{s}{b}, B_s\Big)ds\bigg)\right]\\
&=\sup_{g\in {\mathcal A}_d}\bigg\{{\theta}\int_0^1\!\!\int_{\R^d}\tilde{f}_{j}(s,x)
g^2(s,x)dxds
-\frac12\int_0^1\!\!\int_{\R^d}\vert\nabla_xg(s,x)\vert^2dxds\bigg\},\nonumber
\end{align}
where the space ${\mathcal A}_d$ is introduced in Definition \ref{def: A_d}.

\smallskip
\noindent{\it Step 9: Evaluation of the Feynman-Kac asymptotics.} Let us analyze the \lastchange{right-hand side} of relation~\eqref{new-3}. Owing to the definition of $\tilde{f}_{j}$ in \eqref{def: bar f}, it is readily checked that
$$\int_0^1\!\!\int_{\R^d}\tilde{f}_{j}(s,x)
g^2(s,x)dxds=\langle f_j,\tilde{\mathcal F}(g^2)\rangle_{\mathcal G}
,$$
where we recall that the space $\mathcal G$ is given by \eqref{def: G}, and where we have set
\begin{align}\label{tilde cal F}
\tilde{\mathcal F}(g^2)(\lambda,\xi)=\int_0^1\int_{\R^d}g^2(s,x)e^{-\ii(\lambda s+\xi\cdot x)}dxds.
\end{align}
 Furthermore, each $f_j$ sits in the unit ball $B_{\mathcal G}(0,1)$ and thus
$$\left|\int_0^1\!\!\int_{\R^d}\tilde{f}_{j}(s,x)
g^2(s,x)dxds\right|\le\|{\tilde{\mathcal F}}(g^2)\|_{\mathcal G}.$$

Now it is easily seen that
$$
\begin{aligned}
\|{\tilde{\mathcal F}}(g^2)\|^2_{\mathcal G}
&=\int_{[-N, N]\times[-M, M]^d}\vert{\tilde{\mathcal F}}(g^2)\lastchange{(\la,\xi)}\vert^2
\lastchange{\mu_0^\delta}(d\lambda)\mu(d\xi)\\
&\le \int_{\R^{d+1}}\vert{\tilde{\mathcal F}}(g^2)\lastchange{(\la,\xi)}\vert^2
\mu_0(d\lambda)\mu(d\xi)\\
&=\int_{[0,1]^2}\!\!\int_{\R^d\times\R^d}\gamma_0(s-r)\gamma(x-y)
g^2(x)g^2(y)dxdydrds.
\end{aligned}
$$
Plugging this information into \eqref{new-3}, we have obtained that
\begin{align}\label{eq: step 9 midstep}
&\lim_{b\to\infty}\frac{1}{b}\log\E\left[\exp\bigg({\theta}
\int_0^b\tilde{f}_{j}\Big(\frac{s}{b}, B_s\Big)ds\bigg)\right] \\
&\leq\sup_{g\in {\mathcal A}_d}\bigg\{{\theta}\bigg(\int_{[0,1]^2}\!\!\int_{\R^d\times\R^d}\gamma_0(s-r)\gamma(x-y)
g^2(x)g^2(y)dxdydrds\bigg)^{1/2} \notag\\
&\hspace{2.5in}-\frac12\|\nabla_xg(s,x)\|^2_{L^2([0,1]\times\R^d)}\bigg\}.\nonumber
\end{align}
Now resorting to \eqref{eq:funct-ineq}, the \lastchange{right-hand side} of~\eqref{eq: step 9 midstep} can be upper bounded by
\begin{multline*}
\sup_{g\in{\mathcal A_{d}}}\left\{\theta\kappa^2\|\nabla_xg\|_{L^2([0,1]\times\R^d)}-\frac{1}{2}\|\nabla_xg\|^2_{L^2([0,1]\times\R^d)}\right\} 
=\sup_{z>0}\left\{\theta\kappa^2z-\frac{1}{2}z^2\right\}=\frac{1}{2}\kappa^4\theta^2,
\end{multline*}   
where the last equality stems from a trivial optimization procedure.
Summarizing our computations for this step, we have found that for $j=1,\ldots,m$ we have
\begin{align}\label{eq: F-K asymptotics}
\lim_{b\to\infty}\frac{1}{b}\log\E\left[\exp\bigg(\theta\int_0^b\tilde{f}_{j}\Big(\frac{s}{b}, B_s\Big)ds\bigg)\right]\leq \frac{1}{2}\kappa^4\theta^2.
\end{align}

\smallskip
\noindent{\it Step 10: Conclusion.}  Recall that our desired upper bound has been successively reduced to~\eqref{c-11}. Moreover, we have seen that the \lastchange{left-hand side} of \eqref{c-11} is bounded by the \lastchange{right-hand side} of \eqref{final reduction} for any arbitrary small $\epsilon$. Therefore, we are left with the evaluation of 
\[
A_{\theta, \epsilon,\alpha}\equiv\theta\epsilon+\max_{1\le j\le m}\left\{
\limsup_{b\to\infty}\frac{1}{b}\log\left(\E^\alpha\left[\exp\left({\theta}
\langle f_j, \eta_b\rangle_{\mathcal G}\right)\right]\right)\right\}.
\]
Now putting together \eqref{new-2} and \eqref{eq: F-K asymptotics} we obtain that 
\[
A_{\theta,\epsilon,\alpha}\leq \theta\epsilon+\frac{\alpha d}{2}+\frac{1}{2}\kappa^4\theta^2.
\]
Since $\epsilon$ and $\alpha$ can be made arbitrarily small, we have shown that relation \eqref{c-11} holds true. This finishes the proof of relation \eqref{eq: moment bound alpha^12}. With~\eqref{eq: moment bound alpha^12} in hand, Theorem \ref{thm:explosion-critical}-\ref{critical-i} is obtained along the same lines as for Proposition~\ref{prop:cvgce-FK-Skorohod}.
\qed

\begin{remark}\label{rm: L^2 solution existence}
According to Remark \ref{rmk:critical-sko}-\ref{it:sko-eq-limit}, we could prove that the limit of $u^{\ep,\di}$ in Theorem~\ref{thm:explosion-critical}-\ref{critical-i} also solves the mild Skorohod equation~\eqref{eq:she-sko}. Moreover, thanks to a slight elaboration of Proposition~\ref{prop:cvgce-FK-Skorohod} we could also obtain the uniqueness of the solution. We have not included those details for sake of conciseness.
\end{remark}

\subsubsection{Convergence for $p>2$}  Recall that we have proved item \ref{critical-i} in Theorem \ref{thm:explosion-critical}. Namely we have shown the $L^{2}$ convergence of $u_{t}^{\ep,\di}$ to the solution $u_{t}^{\di}$ of \eqref{eq:she-sko} (see Remark \ref{rm: L^2 solution existence} about a notion of solution)
for $t<t_0=t_0(2)$. In this section we extend this result to a general $p>2$, that is we prove item \ref{critical-ii} in Theorem~\ref{thm:explosion-critical}. As in Section \ref{sec:convergence-p-equal-2}, we will first focus on showing that $\mathbf{E}[|\lastchange{u^\di_t}(x)|^p]$ is finite whenever $t<t_{0}(p)$.

In order to prove the $L^{p}$-boundedness of $\lastchange{u^\di_{t}}(x)$, we introduce an additional intensity parameter $\zeta>0$ and consider the process $\lastchange{u^{(\zeta)}}$, solution of the following slight extension of~\eqref{eq:she-sko}:
\begin{align*}
\partial_t \lastchange{u^{(\zeta)}_t}(x)=\frac{1}{2}\Delta \lastchange{u^{(\zeta)}}(x)+\sqrt{\zeta} \, \lastchange{u^{(\zeta)}}(x)\diamond\dot{W}_t(x),
\end{align*}
with initial condition $u_0(x)=1$. According to an hypercontractivity inequality shown in \cite[Theorem 1]{Le}, the following inequality holds true for all $p\geq 2$ and $(t,x)\in\R_+\times\R^d$
\begin{align}\label{eq: hypercontractivity}
\|\lastchange{u^{(\zeta)}_t}(x)\|_{L^p(\Omega)}\leq\|\lastchange{u^{((p-1)\zeta)}_t}(x)\|_{L^2(\Omega)},
\end{align}
or otherwise stated for $\zeta=1$,
\begin{align}\label{eq: hypercontractivity 2}
\mathbf{E}\left[|\lastchange{u^\di_t}(x)|^p\right]\leq\left(\mathbf{E}\left[|u_t^{(p-1)}(x)|^2\right]\right)^{p/2}.
\end{align}
In addition, writing formula \eqref{eq: FK moments-sko} for $p=2$ and for a noise $\sqrt{p-1}\,\dot{W}$, we get
$$
\begin{aligned}
\mathbf{E}\left[|u_t^{(p-1)}(x)|^2\right]=
\E\left[\exp\bigg((p-1)\int_{[0,t]^2}
\gamma_0(s-r)\gamma\big(B_s-\widetilde{B}_r\big)
dsdr\bigg)\right].
\end{aligned}
$$
Now resorting to a simple change of variables and invoking formula \eqref{eq: FK moments-sko} again, we end up with
\begin{align}\label{eq: L^2 computation}
\mathbf{E}\left[|u_t^{(p-1)}(x)|^2\right]
=&\E\left[
\exp\bigg(\int_{[ 0,{(p-1)^{1/(2H_0-1)}}t]^2}
\gamma_0(s-r)\gamma\big(B_s-\widetilde{B}_r\big)
dsdr\bigg)\right]\notag\\
=&\lastchange{\be}\left[ |\lastchange{u^\di_{(p-1)^{1/(2H_0-1)}t}}(x)|^2\right].
\end{align}
Gathering \eqref{eq: hypercontractivity 2} and \eqref{eq: L^2 computation}, we have thus obtained
\begin{align}\label{c12}
\mathbf{E}\left[|\lastchange{u^\di_t}(x)|^p\right]\leq \left(\lastchange{\be}\left[ |\lastchange{u^\di_{(p-1)^{1/(2H_0-1)}t}}(x)|^2\right]\right)^{p/2}.
\end{align}
Applying Theorem \ref{thm:explosion-critical} - item \ref{critical-i} to the right hand side of \eqref{c12}, we obtain that $\lastchange{u^\di_t}(x)\in L^p(\Omega)$ when $t<t_0(p)$.

We now turn to the proof of the fact that $u^{\epsilon,\di}_t(x)$ converges to $\lastchange{u^\di_t}(x)$ in $L^p(\Omega)$ for $t<t_0(p)$. To this aim, we fix a $p'>p$ such that $t<t_0(p')<t_0(p)$.  By the same argument that we used to get~\eqref{c12},  we obtain the following inequality for $u_t^{\epsilon,\di}(x)$,
\begin{align}\label{c13}
\mathbf{E}\left[\left| u_t^{\epsilon,\di}(x)\right|^{p'}\right]\leq 
\left(\be\left[ \left| u^{\epsilon,\di}_{(p'-1)^{1/(2H_0-1)}t}(x) \right|^2\right]\right)^{p'/2}.
\end{align}
Furthermore, it is easily seen from the proof of Proposition \ref{prop:cvgce-FK-Skorohod} that the $L^2(\Omega)$-norm of $u_t^{\epsilon,\di}(x)$ is dominated by that of $\lastchange{u^\di_t}(x)$. Therefore relation \eqref{c13} implies
\begin{align*}
\mathbf{E}\left[\left| u_t^{\epsilon,\di}(x) \right|^{p'}\right]\leq \left(\lastchange{\be}
\left[ \left |u^\di_{(p'-1)^{1/(2H_0-1)}t}(x)\right|^2\right]\right)^{p'/2}.
\end{align*}
Now by our choice of $p'$ (such that $t<t_0(p')$), we conclude
\begin{align*}
\sup_{\epsilon>0}\mathbf{E}\left[|u_t^{\epsilon,\di}(x)|^{p'}\right]<\infty.
\end{align*}
Therefore, the family $\{|u^{\epsilon,\di}_t(x)|^p\}_{\epsilon>0}$ is uniformly integrable.  This together with the fact that $u^{\epsilon,\di}_t(x)$ converges to $\lastchange{u^\di_t}(x)$ in $L^2(\Omega)$ gives us the convergence in $L^p(\Omega)$. The proof of  Theorem~\ref{thm:explosion-critical} - item \ref{critical-ii} is thus completed.\hfill$\Box$

\subsection{Proof of  Theorem \ref{thm:explosion-critical}  - item \ref{critical-iii} }
Recall that $u^{\ep,\di}_t(x)$ is defined in Proposition \ref{approx solution}. In the previous section, we essentially proved that $u^{\ep,\di}_t(x)$ converges to $\lastchange{u^\di_{t}}(x)$ in $L^p$ when $t<t_0(p)$. In this section, we show that $\|u^{\ep,\di}_t(x)\|_p$ diverges as $\epsilon\to0$ when $t>t_0(p)$.

In order to state a lower bound on the $L^p(\Omega)$ moments of $u^{\epsilon,\di}$, we first define a functional space ${\mathcal G}_0$ which generalizes the space $\mathcal G$ introduced in \eqref{def: G}. 

\begin{definition}\label{def: g_0}

Let $C_{0,b}(\R\times\R^d)$ be the space of compactly supported and bounded functions on $\R\times\R^d$. We set
\begin{multline*}
\mathcal{G}_0=\Big\{h\in L^2(\R\times\R^{d},\mu_0\otimes\mu)\cap C_{0,b}(\R\times\R^d); \
h(-\lambda,-\xi)={\bar{h}(\lambda,\xi)}, \  \mu_0\otimes\mu-\textrm{a.e.}\Big\}.
\end{multline*}
\end{definition}

We can now state a first lower bound on the moments of $u^{\epsilon,\di}$ in terms of a variational quantity.

\begin{proposition}\label{Lp representation}
For $\epsilon>0$ consider the solution $u^{\epsilon,\di}$ to the regularized Skorohod equation~\eqref{eq:heat-sko-approx}. We assume that the critical conditions \eqref{eq:sko-critical-regime} are met. Let $p>1$ and denote its conjugate exponent by $q$. Then for $t>0$ and $x\in\R^d$ we have
\begin{align}\label{var representation}
\liminf_{\ep\to0}\|u^{\ep,\di}_t(x)\|_p
\geq\sup_{h\in\mathcal{G}_0}\E\left[\exp\left(\int_0^t\tilde{h}(s,B_s)ds-\frac{q-1}{2}\int_{\R^{d+1}}|h(\lambda,\xi)|^2\mu_0(d\lambda)\mu(d\xi)\right)\right],
\end{align}
where the function $\tilde{h}$ is defined similarly to $\tilde{f}_j$ in \eqref{def: bar f}, namely,
\begin{align}\label{eq: def of tilde h}\tilde{h}(s,x)=\int_{\R\times\R^{d}}h(\lambda,\xi)e^{i(\lambda s+\xi\cdot x)}\mu_0(d\lambda)\mu(d\xi).\end{align}
\end{proposition}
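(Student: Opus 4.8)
The plan is to combine the Feynman--Kac representation of $u^{\ep,\di}$ from Proposition~\ref{approx solution} with a Cameron--Martin shift on the Wiener space of $W$, used in duality with Hölder's inequality. For $\psi\in\ch$ write $\mathcal{E}(\psi):=\exp\big(W(\psi)-\tfrac12\|\psi\|_\ch^2\big)$ for the associated Wick exponential, so that $\be[\mathcal{E}(\psi)]=1$, $\be[\mathcal{E}(a)\mathcal{E}(b)]=e^{\langle a,b\rangle_\ch}$ for deterministic $a,b\in\ch$, and $\|\mathcal{E}(\psi)\|_{L^q(\oom)}=\exp\big(\tfrac{q-1}2\|\psi\|_\ch^2\big)$. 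For $h\in\mathcal{G}_0$ let $\psi_h$ be the element of $\ch$ whose space--time Fourier transform is $h$: in view of \eqref{eq:covariance-W-with-Fourier} the Fourier transform identifies $\ch$ isometrically with a symmetric subspace of $L^2(\R^{d+1};\mu_0\otimes\mu)$ containing $\mathcal{G}_0$, so $\|\psi_h\|_\ch^2=\int_{\R^{d+1}}|h(\lambda,\xi)|^2\mu_0(d\lambda)\mu(d\xi)$. Since $u^{\ep,\di}_t(x)\ge0$ and $\mathcal{E}(\psi_h)\ge0$, Hölder's inequality gives, for every $\ep>0$ and every $h\in\mathcal{G}_0$,
\[
\|u^{\ep,\di}_t(x)\|_{L^p(\oom)}\ \ge\ e^{-\frac{q-1}2\int_{\R^{d+1}}|h(\lambda,\xi)|^2\mu_0(d\lambda)\mu(d\xi)}\;\be\big[u^{\ep,\di}_t(x)\,\mathcal{E}(\psi_h)\big].
\]

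Next I would evaluate the right-hand side. Recall from Proposition~\ref{approx solution} and its proof that $u^{\ep,\di}_t(x)=\E\big[\mathcal{E}(\phi^\ep_B)\big]$, where $\phi^\ep_B$ is the $B$-dependent element of $\ch$ defined by \eqref{eq:phi} and $\beta_t^{\ep,B}=\|\phi^\ep_B\|_\ch^2$. Interchanging $\be$ and $\E$ by Tonelli and using $\be[\mathcal{E}(a)\mathcal{E}(b)]=e^{\langle a,b\rangle_\ch}$ yields
\[
\be\big[u^{\ep,\di}_t(x)\,\mathcal{E}(\psi_h)\big]=\E\Big[\be\big[\mathcal{E}(\phi^\ep_B)\mathcal{E}(\psi_h)\big]\Big]=\E\big[e^{\langle\phi^\ep_B,\psi_h\rangle_\ch}\big].
\]
A direct computation of the Fourier transform of $\phi^\ep_B$ from \eqref{eq:phi} and the Gaussian form of $p_\ep$ (consistent with the expression \eqref{eq:def-beta-epsilon} for $\beta_t^{\ep,B}$) gives $\mathcal{F}\phi^\ep_B(\lambda,\xi)=e^{-\frac\ep2(|\lambda|^2+|\xi|^2)}e^{-\ii\lambda t}\int_0^t e^{\ii(\lambda s-\xi\cdot B_s)}\,ds$, whence, by the Fourier form of the $\ch$-inner product,
\[
\langle\phi^\ep_B,\psi_h\rangle_\ch=\int_{\R^{d+1}}e^{-\frac\ep2(|\lambda|^2+|\xi|^2)}e^{-\ii\lambda t}\Big(\int_0^t e^{\ii(\lambda s-\xi\cdot B_s)}ds\Big)\overline{h(\lambda,\xi)}\,\mu_0(d\lambda)\mu(d\xi).
\]

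Then I would let $\ep\to0$. Because $h\in\mathcal{G}_0$ is bounded and compactly supported, $|h|\,(\mu_0\otimes\mu)$ has finite mass, so the integrand above is dominated by $t\,|h(\lambda,\xi)|$ uniformly in $\ep$ and in the Brownian path; dominated convergence gives $\langle\phi^\ep_B,\psi_h\rangle_\ch\to\int_0^t\tilde h(t-s,B_s)\,ds$, where the symmetry $h(-\lambda,-\xi)=\overline{h(\lambda,\xi)}$ together with \eqref{eq: def of tilde h} is used to rewrite the frequency integral as $\tilde h(t-s,B_s)$. Since moreover $|\langle\phi^\ep_B,\psi_h\rangle_\ch|\le t\int_{\R^{d+1}}|h|\,\mu_0\mu<\infty$ uniformly in $\ep$, a second application of dominated convergence on the Brownian side yields $\E[e^{\langle\phi^\ep_B,\psi_h\rangle_\ch}]\to\E[e^{\int_0^t\tilde h(t-s,B_s)ds}]$. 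Combining the three displays, for every $h\in\mathcal{G}_0$,
\[
\liminf_{\ep\to0}\|u^{\ep,\di}_t(x)\|_{L^p(\oom)}\ \ge\ \E\Big[\exp\Big(\int_0^t\tilde h(t-s,B_s)\,ds-\tfrac{q-1}2\int_{\R^{d+1}}|h(\lambda,\xi)|^2\mu_0(d\lambda)\mu(d\xi)\Big)\Big].
\]
Taking the supremum over $h\in\mathcal{G}_0$ and observing that $h\mapsto h_c$, with $h_c(\lambda,\xi):=e^{-\ii\lambda t}h(-\lambda,\xi)$, is a norm-preserving involution of $\mathcal{G}_0$ satisfying $\widetilde{h_c}(s,y)=\tilde h(t-s,y)$, one may relabel $h$ by $h_c$ to turn $\int_0^t\tilde h(t-s,B_s)\,ds$ into $\int_0^t\tilde h(s,B_s)\,ds$ without changing $\|h\|$; since $u_0\equiv1$ the starting point of $B$ is immaterial by translation invariance. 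This is precisely \eqref{var representation}.

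The main obstacle I anticipate is the identification of the limit of $\langle\phi^\ep_B,\psi_h\rangle_\ch$: this requires making the Fourier (spectral) picture of $\ch$ precise enough that $\psi_h$ with $\mathcal{F}\psi_h=h$ is a legitimate element of (the Cameron--Martin space) $\ch$ and the inner product has the stated integral form, and it requires the explicit computation of $\mathcal{F}\phi^\ep_B$. Once this is in place, the compact support and boundedness built into $\mathcal{G}_0$ are exactly what is needed both to pass to the limit by dominated convergence and to keep the exponential moments $\E[e^{\langle\phi^\ep_B,\psi_h\rangle_\ch}]$ under control uniformly in $\ep$; everything else (the Hölder step, the Wick-exponential algebra, the reindexing $h\mapsto h_c$) is routine.
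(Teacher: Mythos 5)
Your argument is correct and is essentially the paper's own proof: the H\"older duality against the Wick exponential $\mathcal{E}(\psi_h)$ is exactly the $\mathcal S$-transform bound \eqref{eq: estimate S transform}, and your computation of $\be[u^{\ep,\di}_t(x)\mathcal{E}(\psi_h)]$ via Feynman--Kac, Fubini and the Gaussian identity reproduces \eqref{eq: S transform of F}--\eqref{eq: L^q norm of E_phi} before letting $\ep\to0$. If anything, you are more explicit than the paper on points it glosses over (the dominated-convergence passage $\ep\to0$, the reindexing $h\mapsto h_c$ handling the $\tilde h(t-s,\cdot)$ versus $\tilde h(s,\cdot)$ discrepancy, and the identification of $\mathcal{G}_0$ inside the Fourier image of $\ch$), so no changes are needed.
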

\begin{proof}
Recall that $u^{\epsilon,\di}_t(x)$ is also given by expression \eqref{eq:approx-FK-sol-sko}, where $V_t^{\epsilon,B}(x)$ is introduced in \eqref{eq:def-Vt-epsilon-2}. Next we recall the definition of ${\mathcal S}$-transform on the Wiener space related to our noise $W$ (see \cite{HKP} for more details about the $\mathcal S$-transform). Having in mind the notation introduced in Section \ref{intro Wiener space}, the $\mathcal S$-transform of $F\equiv u_t^{\epsilon,\di}(x)$ is defined for $\varphi\in\mathcal{H}$ by
\[{\mathcal S}F(\varphi)=\mathbf{E}[F{\mathcal E}_\varphi],\]
where the martingale exponential ${\mathcal E}_\varphi$ is given by
\begin{align}\label{E_epsilon}
{\mathcal E}_\varphi=\exp\left(W(\varphi)-\frac{1}{2}\|\varphi\|_{\mathcal H}^2\right).
\end{align}
As highlighted in \cite[Chapter 2]{HKP}, the $\mathcal S$-transform has to be considered as the equivalent of the Fourier transform on a Wiener space. However in our context we will just use the following basic estimate for $F=u^{\epsilon,\di}_t(x)$:
\begin{align}\label{eq: estimate S transform}
\|F\|_{p}\geq \sup\left\{\frac{{\mathcal S}F(\varphi)}{\|{\mathcal E}_\varphi\|_q}; \varphi\in{\mathcal H}\right\},
\end{align}
where we recall that $q$ is the conjugate of $p$.
We will now analyze the \lastchange{right-hand side} of relation~\eqref{eq: estimate S transform}.

In order to evaluate the $\mathcal S$-transform ${\mathcal S}F(\varphi)$ in \eqref{eq: estimate S transform}, We resort to the Feynman-Kac formula \eqref{eq:approx-FK-sol-sko} for $u^{\epsilon,\di}_t(x)$, Fubini's theorem and the isometry \eqref{eq:covariance-W-with-Fourier} on our standing Wiener space. Similarly to \eqref{a02}-\eqref{a101}, albeit with a time-space Fourier transform, we get 
\begin{align}\label{eq: S transform of F}
{\mathcal S}F(\varphi)=\E\left[\exp\left(\int_{\R\times\R^d}\mathcal{F}\varphi(\lambda,\xi)\bar{{\mathcal F}}{\psi_t}(\lambda,\xi)\mu_0(d\lambda)\mu(d\xi)\right)\right],
\end{align}
where the function $\psi_t$ is given by
$$\psi_t(\tau,x)=\int_0^tp_\epsilon(\tau-(t-s),x-B_s)ds.$$
Evaluating the Fourier transform of $\psi_t$  and plugging into \eqref{eq: S transform of F}, we thus get
\begin{align}\label{eq: S transform of F 2}
{\mathcal S}F(\varphi)=\int_0^t\left[\int_{\R\times\R^d}e^{-\epsilon^2(\lambda^2+|\xi|^2)/2}e^{i(\lambda(t-s)+\xi\cdot B_t)}\mathcal{F}\varphi(\lambda,\xi)\mu_0(d\lambda)\mu(d\xi)\right]ds.
\end{align}

Let us now compute the quantity $\|{\mathcal E}_\varphi\|_q$ in \eqref{eq: estimate S transform}. Owing to the fact that $\lastchange{\|\mathcal{E}_g\|_1}=1$ for any $g\in\mathcal H$, we easily get that 
\begin{align}\label{eq: L^q norm of E_phi}
\|\mathcal{E}_\varphi\|_q=\exp\left(\frac{q-1}{2}\int_{\R\times\R^d}|\mathcal{F}\varphi(\lambda,\xi)|^2\mu_0(d\lambda)\mu(d\xi)\right).
\end{align}
Therefore gathering \eqref{eq: S transform of F 2} and \eqref{eq: L^q norm of E_phi} into \eqref{eq: estimate S transform}, 
taking limit $\epsilon\to0$ in \eqref{eq: S transform of F 2} and observing that $\mathcal{F}\varphi\in L^2(\R\times\R^d; \mu_0\otimes\mu)$ whenever $\varphi\in\mathcal{H}$, we end up with our claim \eqref{var representation}.

\end{proof}

\begin{remark}
By the local finiteness and the symmetry of $\mu_0$ and $\mu$, the function $\tilde{h}$ is real-valued, bounded and uniformly continuous on $[0,1]\times\R^d$ whenever $h\in\cg_{0}$.
\end{remark}

Our next step is to relate the lower bound \eqref{var representation} to the space ${\mathcal A}_d$ introduced in Definition~\ref{def: A_d}. This is summarized in the following lemma. 
\begin{lemma}\label{lemma: positivity imply no L^p norm}
Let us assume that the conditions of Proposition \ref{Lp representation} are met. We also suppose that the following condition is satisfied,
\begin{align}\label{eq: positivity imply no L^p norm}
\sup_{h\in\mathcal{G}_0}\sup_{g\in\mathcal{A}_d}\left\{\lastchange{\phi_t}(h,g)-\frac{1}{2}\int_0^1\int_{\R^d}|\nabla_xg(s,x)|^2dxds\right\}>0,
\end{align}
where we recall that $\mathcal{G}_0$ is the space introduced in Definition \ref{def: g_0} and where the variational quantity $\lastchange{\phi_t}(h,g)$ is defined by
\begin{multline}\label{eq:def-phi-h-g}
\lastchange{\phi_t}(h,g)=\int_{\R\times\R^d}h(\lambda,\xi)\tilde{\mathcal{F}}g^2(\lambda,\xi)\mu_0(d\lambda)\mu(d\xi)\\
-\frac{1}{2t^{2H_0-1}(p-1)}\int_{\R\times\R^d}|h(\lambda,\xi)|^2\mu_0(d\lambda)\mu(d\xi),
\end{multline}
with $\tilde{\mathcal F}g^2$ being the truncated Fourier transform given in \eqref{tilde cal F}.
Then we have that
\begin{align}\label{eq:blowup L^p norm}\lim_{\epsilon\to0}\|u^{\epsilon,\di}_t(x)\|_p=\infty.\end{align}
\end{lemma}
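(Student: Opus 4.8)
Starting from the lower bound established in Proposition~\ref{Lp representation}, the idea is to choose a clever test function $\varphi$ in the $\mathcal S$-transform estimate — or equivalently a clever $h\in\mathcal G_0$ on the right-hand side of \eqref{var representation} — and then exploit a large-time scaling together with the Feynman–Kac asymptotics of \cite[Proposition 3.1]{CHSX} to show the variational quantity on the right of \eqref{var representation} is already infinite. Concretely, in \eqref{var representation} one is free to rescale the candidate $h$: replacing $h$ by $\theta h$ for a large constant $\theta$, and noting that the Feynman–Kac type functional $\E[\exp(\int_0^t \widetilde{h}(s,B_s)ds - \cdots)]$ has exponential growth governed by a Donsker–Varadhan type variational formula, one sees that the exponent is controlled by the sign of the quantity appearing in \eqref{eq: positivity imply no L^p norm}. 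So the scheme is: (i) rescale time in \eqref{var representation} to convert the $t$-dependence into the factor $1/(t^{2H_0-1}(p-1))$ that appears in the definition \eqref{eq:def-phi-h-g} of $\phi_t$; (ii) apply the large-$b$ Feynman–Kac asymptotics of \cite[Proposition 3.1]{CHSX} (exactly as in Step~8--9 of the proof of Theorem~\ref{thm:explosion-critical}-\ref{critical-i}, but now keeping the full supremum rather than optimizing it away) to identify the exponential growth rate of the right-hand side of \eqref{var representation} as the double supremum in \eqref{eq: positivity imply no L^p norm}; (iii) conclude that if this double supremum is strictly positive, the right-hand side of \eqref{var representation} is $+\infty$, whence $\|u^{\epsilon,\di}_t(x)\|_p\to\infty$.

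Let me be more precise about step~(i). In \eqref{var representation} the relevant quantity is
$$
\E\left[\exp\left(\int_0^t\widetilde h(s,B_s)\,ds-\frac{q-1}{2}\|h\|^2\right)\right],
$$
with $\|h\|^2=\int_{\R^{d+1}}|h|^2\,\mu_0\otimes\mu$ and $q-1=1/(p-1)$. Using the Brownian scaling $B_s\overset{(d)}{=}b^{1/2}B_{s/b}$ and the explicit homogeneities of $\mu_0,\mu$ recorded in \eqref{recall gamma}--\eqref{eq: alpha^12 after rescaling} — with the critical relation $\sum_j(H_j-1)=-1$ from \eqref{eq:sko-critical-regime} — one rewrites, for a rescaled profile $h$ supported on a fixed scale and a large parameter $b$, the exponent $\int_0^t\widetilde h(s,B_s)ds$ as a quantity of order $b$ times $\int_0^1\widetilde h_b(s,B_s^{(b)})ds$ for a Fourier-transform-type function $\widetilde h_b$, precisely in the form covered by \cite[Proposition 3.1]{CHSX}. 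Taking first $h=\theta h_0$ with $h_0$ a maximizer (or near-maximizer) of the $\phi_t$-functional and letting the scaling parameter tend to infinity, the exponential rate of growth of the right-hand side of \eqref{var representation} is
$$
\sup_{g\in\mathcal A_d}\left\{\phi_t(h_0,g)-\frac12\int_0^1\!\!\int_{\R^d}|\nabla_x g(s,x)|^2\,dx\,ds\right\},
$$
and then taking the supremum over $h_0\in\mathcal G_0$ recovers exactly the left-hand side of \eqref{eq: positivity imply no L^p norm}. If that quantity is $>0$ the right-hand side of \eqref{var representation} diverges, and \eqref{eq:blowup L^p norm} follows.

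**The main obstacle.** The delicate point is matching the variational structures: one must verify that the Feynman–Kac asymptotic formula \cite[Proposition 3.1]{CHSX} applied to the function $\widetilde h$ (which is the Fourier transform of the finite compactly supported measure $h(\lambda,\xi)\mu_0(d\lambda)\mu(d\xi)$, so all the regularity hypotheses are met, as already noted in Step~8 of the proof of Theorem~\ref{thm:explosion-critical}) genuinely produces, after the time-rescaling that introduces the $t^{2H_0-1}(p-1)$ factor, the Donsker–Varadhan functional $\phi_t(h,g)-\frac12\|\nabla_x g\|^2_{L^2}$ with $\phi_t$ as defined in \eqref{eq:def-phi-h-g}; in particular one needs the identity $\int_0^1\!\!\int_{\R^d}\widetilde h(s,x)g^2(s,x)\,dx\,ds=\langle h,\widetilde{\mathcal F}g^2\rangle$ (which is the same pairing used in Step~9 via \eqref{tilde cal F}) and one needs to track carefully how the $\frac{q-1}{2}\|h\|^2$ penalty transforms under scaling into the $\frac{1}{2t^{2H_0-1}(p-1)}\|h\|^2$ penalty in \eqref{eq:def-phi-h-g}. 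A secondary but still nontrivial issue is interchanging $\liminf_{\epsilon\to0}$ with the supremum over $h\in\mathcal G_0$ and with the large-scale limit; since Proposition~\ref{Lp representation} already delivers the $\epsilon\to0$ limit as a lower bound involving the full supremum over $\mathcal G_0$, it suffices to exhibit a single $h\in\mathcal G_0$ making the inner Feynman–Kac expectation infinite, so one fixes a near-maximizer $h$ of the $\phi_t$-functional, scales it, and uses the monotone/Fatou direction of the limit — this avoids any uniformity headaches, at the cost of a short approximation argument to go from the compactly-supported-bounded class $\mathcal G_0$ to general maximizers (handled by truncation and mollification, exactly as in the construction of $\mathcal G_0$ versus $\mathcal G$).
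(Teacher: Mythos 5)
Your proposal follows essentially the same route as the paper: starting from the lower bound of Proposition~\ref{Lp representation}, you rescale the test function $h$ (using Brownian scaling and the homogeneity of $\mu_0\otimes\mu$ under the critical relation $d-H=1$, which turns the penalty into the $\frac{1}{2t^{2H_0-1}(p-1)}\|h\|^2$ term), apply the large-time Feynman--Kac variational asymptotics over $\mathcal{A}_d$, and identify the growth rate via the Parseval pairing $\int_0^1\!\int_{\R^d}\tilde h\, g^2 = \langle h,\tilde{\mathcal F}g^2\rangle$, concluding divergence when \eqref{eq: positivity imply no L^p norm} is positive --- exactly the paper's argument via the rescaled family $h_{t,b}$ and \eqref{var representation 2}--\eqref{term 4}. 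Only a minor wording slip: for fixed $h$ and $t$ the inner expectation is finite (since $\tilde h$ is bounded); the blowup comes from the supremum over the rescaled family as the scaling parameter $b\to\infty$, which your growth-rate computation in fact captures.
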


\begin{proof}
We start from the \lastchange{right-hand side} of \eqref{var representation} and consider a generic $h\in\mathcal{G}_0$. Then for $b, t>0$ we also introduce a family of rescaled functions $\{h_{t,b}; t,b>0\}$ given by
$$h_{t,b}(\lambda,\xi)=\frac{1}{t^{2H_0-1}}h\left(t\lambda,\left(\frac{t}{b}\right)^{1/2}\xi\right).$$
Each $h_{t,b}$ is an element of $\mathcal{G}_0$ and  we will now evaluate the expression \eqref{var representation} for those functions.  We will take the next two observations into account:

\begin{enumerate}[wide, labelwidth=!, labelindent=0pt, label=(\roman*)]
\setlength\itemsep{.01in}
\item
The function $\tilde{h}_{t,b}$ defined by \eqref{eq: def of tilde h} can be computed thanks to an elementary change of variable. Owing to an additional Brownian scaling argument, we get
\begin{align}\label{eq: term 1}
\int_0^t\tilde{h}_{t,b}(s, B_s)ds\stackrel{(d)}{=}\int_0^b\tilde{h}\left(\frac{s}{b}, B_s\right)ds,
\end{align}

\item
Under our standing assumption $d-H=1$ we also have
\begin{align}\label{eq: term 2}
\int_{\R\times\R^{d}}|h_{t,b}(\lambda,\xi)|^2\mu_0(d\lambda)\mu(d\xi)=\frac{b}{t^{2H_0-1}}\int_{\R\times\R^{d}}|h(\lambda,\xi)|^2\mu_0(d\lambda)\mu(d\xi).
\end{align}

\end{enumerate}
Therefore plugging \eqref{eq: term 1} and \eqref{eq: term 2} into \eqref{var representation}, we get that 
\begin{align}
&\liminf_{\epsilon\to0}\|u^{\epsilon,\di}_t(x)\|_p\nonumber\\
&\quad\geq \sup_{b>0}\sup_{h\in\mathcal{G}_0}\E\left[\exp\left(\int_0^b\tilde{h}\left(\frac{s}{b},B_s\right)ds-\frac{(q-1)b}{2t^{2H_0-1}}\int_{\R\times\R^d}|h(\lambda,\xi)|^2\mu_0(d\lambda)\mu(ds)\right)\right].\label{var representation 2}
\end{align}
We are now in a position to apply \cite[Proposition 3.1]{Ch18} and take limits as $b\to\infty$ in \eqref{var representation 2}. Indeed it is readily checked that $\tilde{h}$ is bounded and uniformly continuous whenever $h\in\mathcal{G}_0$. Hence \cite[Proposition 3.1]{Ch18} reads 
\begin{align}\label{eq: term 3}
&\lim_{b\to\infty}\frac{1}{b}\log\E\left[\exp\left(\int_0^b\tilde{h}\left(\frac{s}{b}, B_s\right)ds\right)\right]\nonumber\\
=&\sup_{g\in\mathcal{A}_d}\left\{\int_0^1\int_{\R^d}\tilde{h}(s,x)g^2(s,x)dxds-\frac{1}{2}\int_0^1\int_{\R^d}|\nabla_xg(s,x)|^2dxds\right\}.
\end{align}
We also trivially have
\begin{align}
&\lim_{b\to\infty}\frac{1}{b}\log\left(\exp\left(\frac{(q-1)b}{2t^{2H_0-1}}\int_{\R\times\R^d}|h(\lambda,\xi)|^2\mu_0(d\lambda)\mu(ds)\right)\right)\nonumber\\
&\quad =\frac{(q-1)}{2t^{2H_0-1}}\int_{\R\times\R^d}|h(\lambda,\xi)|^2\mu_0(d\lambda)\mu(ds)\label{var rep trivial term}
\end{align}
Gathering \eqref{eq: term 3} and \eqref{var rep trivial term} into \eqref{var representation 2} and considering the limit case $b\to\infty$, we have obtained that \eqref{eq:blowup L^p norm} holds true
whenever the following relation is met for at least a $h\in\mathcal{G}_0$:
\begin{multline}
\sup_{g\in\mathcal{A}_d}\bigg\{\int_0^1\int_{\R^{d}}\tilde{h}(s,x)g^2(s,x)dxds
-\frac{1}{2}\int_0^1\int_{\R^d}|\nabla_xg(s,x)|^2dxds  \\
-\frac{q-1}{2t^{2H_0-1}}\int_{\R\times\R^{d}}|h(\lambda,\xi)|^2\mu_0(d\lambda)\mu(d\xi)\bigg\}>0.
\label{eq: positivity imply no L^p norm 0}
\end{multline}
In order to go from \eqref{eq: positivity imply no L^p norm 0} to \eqref{eq: positivity imply no L^p norm}, we proceed as follows: recalling the definition of $\tilde{\mathcal{F}}$ in~\eqref{tilde cal F}, we will see in the next Lemma~\ref{F(g)} that if  $g\in\mathcal{A}_d$ then $\tilde{\mathcal{F}}g^2\in\overline{\mathcal{G}_0}$, where $\overline{\mathcal{G}_0}$ is defined by \eqref{bar g_0}. With the expression \eqref{eq: def of tilde h} for $\tilde{h}(s,x)$ in mind, a direct application of Parseval's identify yields
\begin{align}\label{term 4}
\int_0^1\int_{\R^d}\tilde{h}(s,x) \, g^2(s,x) \,dxds
=\int_{\R\times\R^{d}}h(\lambda,\xi) \, \tilde{\mathcal{F}}g^2(\lambda,\xi) \, \mu_0(d\lambda)\mu(d\xi).
\end{align}
\lastchange{Invoking the fact that $q-1=1/(p-1)$, we thus deduce that}
\begin{equation*}
\int_0^1\int_{\R^d}\tilde{h}(s,x)g^2(s,x)dxds-\frac{q-1}{2t^{2H_0-1}}\int_{\R\times\R^d}|h(\lambda,\xi)|^2\mu_0(d\lambda)\mu(d\xi)
=\lastchange{\phi_t}(h,g),
\end{equation*}
where $\lastchange{\phi_t}(h,g)$ is defined by \eqref{eq:def-phi-h-g}. Hence it is readily checked that condition \eqref{eq: positivity imply no L^p norm 0} is equivalent to \eqref{eq: positivity imply no L^p norm}, which finishes our proof.

\end{proof}

We now turn to the technical result used in the proof of Lemma \ref{lemma: positivity imply no L^p norm}.
\begin{lemma}\label{F(g)}
Recall that the space $\mathcal{A}_d$ is given in Definition \ref{def: A_d}. Let $\overline{\mathcal{G}_0}$ be the space defined by 
\begin{align}\label{bar g_0}\overline{\mathcal{G}_0}=\{h\in L^2(\R\times\R^d; \mu_0\otimes\mu); h(-\lambda,-\xi)=\bar{h}(\lambda,\xi)\ \ \mu_0\otimes\mu-\text{a.e.}\}\end{align}
 Then for any $g\in\mathcal{A}_d$, we have $\tilde{\mathcal{F}}(g^2)\in\overline{\mathcal{G}_0}.$
\end{lemma}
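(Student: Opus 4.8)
The plan is to check the two defining properties of $\overline{\mathcal{G}_0}$ (see \eqref{bar g_0}) separately for $h=\tilde{\mathcal F}(g^2)$: first the Hermitian-type symmetry $h(-\lambda,-\xi)=\bar h(\lambda,\xi)$, and then square-integrability against $\mu_0\otimes\mu$. The first property is elementary. For each fixed $s\in[0,1]$ one has $g^2(s,\cdot)\in L^1(\R^d)$ with $\int_{\R^d}g^2(s,x)\,dx=1$, hence $g^2\mathbf 1_{[0,1]}\in L^1(\R^{d+1})$, the integral defining $\tilde{\mathcal F}(g^2)$ in \eqref{tilde cal F} converges absolutely, $|\tilde{\mathcal F}(g^2)(\lambda,\xi)|\le 1$, and $\tilde{\mathcal F}(g^2)$ is bounded and continuous on $\R\times\R^d$. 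Since $g$, and therefore $g^2$, is real-valued, conjugating \eqref{tilde cal F} gives $\overline{\tilde{\mathcal F}(g^2)(\lambda,\xi)}=\tilde{\mathcal F}(g^2)(-\lambda,-\xi)$ for every $(\lambda,\xi)$, which is exactly the symmetry required in \eqref{bar g_0}.

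The substance of the proof is the membership $\tilde{\mathcal F}(g^2)\in L^2(\R\times\R^d;\mu_0\otimes\mu)$, and the key point is that its squared $L^2(\mu_0\otimes\mu)$-norm is precisely the left-hand side of the functional inequality \eqref{eq:funct-ineq}. Indeed, expanding the modulus square and using the spectral representations \eqref{eq:rep-gamma-fourier} of $\gamma_0$ and $\gamma$, one obtains
\[
\int_{\R^{d+1}}|\tilde{\mathcal F}(g^2)(\lambda,\xi)|^2\,\mu_0(d\lambda)\mu(d\xi)
=\int_{[0,1]^2}\int_{(\R^d)^2}\gamma_0(s-t)\,\gamma(x-y)\,g^2(s,x)\,g^2(t,y)\,dx\,dy\,ds\,dt .
\]
Once this identity is available, the functional inequality \eqref{eq:funct-ineq} and the assumption $g(s,\cdot)\in W^{1,2}(\R^d)$ (under which $\int_{[0,1]\times\R^d}|\nabla_x g(s,x)|^2\,ds\,dx<\infty$, this being implicit in $g$ being an admissible competitor in $\mathcal{A}_d$) give
\[
\int_{\R^{d+1}}|\tilde{\mathcal F}(g^2)(\lambda,\xi)|^2\,\mu_0(d\lambda)\mu(d\xi)\le C^{4}\int_{[0,1]\times\R^d}|\nabla_x g(s,x)|^2\,ds\,dx<\infty ,
\]
with $C$ the constant appearing in \eqref{eq:funct-ineq}. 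Combined with the symmetry established above, this yields $\tilde{\mathcal F}(g^2)\in\overline{\mathcal{G}_0}$.

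The main obstacle is the interchange of integrals justifying the displayed identity, since $\mu_0$ and $\mu$ are infinite measures so Fubini does not apply verbatim. I would handle this by first replacing $\mu_0$ and $\mu$ by their Gaussian truncations (in the spirit of \eqref{eq: cutoff mu_0}): for the truncated measures all quantities are finite, the associated covariances are bounded and smooth, Fubini applies directly, and the truncated analogue of the identity holds. One then removes the truncation and passes to the limit, using monotone convergence on the spectral side (the integrand $|\tilde{\mathcal F}(g^2)|^2$ being nonnegative) and the interpretation of the left-hand side of \eqref{eq:funct-ineq} through this same spectral limit — which is in any case how \eqref{eq:funct-ineq} must be read when the $\gamma_j$ are genuine distributions (cf.\ Remark \ref{existence of kappa}). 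Everything else in the argument is routine bookkeeping.
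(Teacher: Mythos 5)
Your argument is correct and is essentially the paper's own (very terse) proof: the symmetry in \eqref{bar g_0} comes from $g^2$ being real, and membership in $L^2(\mu_0\otimes\mu)$ comes from identifying $\|\tilde{\mathcal F}(g^2)\|^2_{L^2(\mu_0\otimes\mu)}$ with the left-hand side of \eqref{eq:funct-ineq} via the spectral representation \eqref{eq:rep-gamma-fourier}, exactly as the paper does elsewhere (Step 9 of Section \ref{sec:proof-upper-bound}). Your additional truncation/monotone-convergence justification of the interchange of integrals, and the remark that \eqref{eq:funct-ineq} is to be read on the spectral side when the $\gamma_j$ are distributions, are details the paper leaves implicit, not a departure from its route.
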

\begin{proof}
This is a direct consequence of inequality \eqref{eq:funct-ineq} and definition of $\tilde{\mathcal{F}}$ in \eqref{tilde cal F}.
\end{proof}

Now we are ready to prove the main result of this section.

\begin{proof}[Proof of  Theorem \ref{thm:explosion-critical}  - item \ref{critical-iii}] We have shown that $\|u_t^{\epsilon,\di}(x)\|_p$ diverges as $\epsilon\to0$ as long as relation \eqref{eq: positivity imply no L^p norm} is satisfied. {Note that since $\tilde{\mathcal{F}}(g^2)\in\overline{\mathcal{G}_0}$, the functional $\phi_t(h,g)$ defined in~\eqref{eq:def-phi-h-g} is continuous in $h$ with respect to the $L^2(\R^{d+1}, \mu_0\otimes\mu)$-norm. Therefore,
$$\sup_{h\in\mathcal{G}_0}\phi_t(h,g)=\sup_{h\in\overline{\mathcal{G}_0}}\phi_t(h,g).$$
Hence, we aim to show that \eqref{eq: positivity imply no L^p norm} is satisfied for a suitable family of functions $h\in\overline{\mathcal{G}_0}$.}
Namely, we want to find some functions $h\in\overline{\mathcal{G}_0}$ such that
\begin{align}\label{eq: positivity imply no L^p norm 2}
\sup_{g\in\mathcal{A}_d}\left\{\lastchange{\phi_t}(h,g)-\frac{1}{2}\int_0^1\int_{\R^d}|\nabla_xg(s,x)|^2dxds\right\}>0.
\end{align}
To this end, we will consider a family of functions $\{h^{\theta,g}; \theta\in\R, g\in\mathcal{A}_d\}$. Each $h^{\theta,g}$ is defined by
$$h^{\theta,g}(\lambda,\xi)=\theta\big[\tilde{\mathcal{F}}(g^2)\big](-\lambda,-\xi)=\theta\lastchange{\overline{\big[\tilde{\mathcal{F}}(g^2)\big]}}(\lambda,\xi),$$
where we recall that $\tilde{\mathcal{F}}(g^2)\in\overline{\mathcal{G}_0}$ according to Lemma \ref{F(g)}.  For such a function $h^{\theta,g}$ we have
\[
\lastchange{\phi_t}(h^{\theta,g},g)=\left(\theta-\frac{\theta^2}{2t^{2H_0-1}(p-1)}\right)\int_{\R\times\R^d}|\tilde{\mathcal{F}}(g^2)(\lambda,\xi)|^2\mu_0(d\lambda)\mu(d\xi).
\]
Thus an elementary computation reveals that
\[
\sup_{\theta\in\R}\lastchange{\phi_t}(h^{\theta,g},g)=\frac{(p-1)t^{2H_0-1}}{2}\int_{\R\times\R^d}|\tilde{\mathcal{F}}(g^2)(\lambda,\xi)|^2\mu(d\lambda)\mu(d\xi).
\]
Plugging this information into \eqref{eq: positivity imply no L^p norm 2} we get 
\begin{align*}
&\sup_{h\in\overline{\mathcal{G}_0}}\sup_{g\in\mathcal{A}_d}\left\{\lastchange{\phi_t}(h,g)-\frac{1}{2}\int_0^1\int_{\R^d}|\nabla_xg(s,x)|^2dxds\right\}\\
&\geq\sup_{g\in\mathcal{A}_d}\Big\{\frac{(p-1)t^{2H_0-1}}{2}\int_{\R\times\R^d}|\tilde{\mathcal{F}}(g^2)(\lambda,\xi)|^2\mu(d\lambda)\mu(d\xi) -\frac{1}{2}\int_0^1\int_{\R^d}|\nabla_xg(s,x)|^2dxds \Big\}.
\end{align*}
According to Notation \ref{not:kappa} and to the definition of $t_0(p)$ in Theorem \ref{thm:explosion-critical}, it is clear that the above quantity is strictly positive as soon as $t>t_0(p)$. This concludes the proof. 
\end{proof}

\section{Stratonovich case}\label{sec:strato}

In this section we analyze the moments of  equation~\eqref{eq:she-intro} interpreted in the Stratonovich sense, for a wide class of noises. We first briefly recall, in Section \ref{sec:existence-strato}, the main wellposedness result of \cite{CDOT2} regarding the Stratonovich equation. The moment analysis will then take place in Section~\ref{section:moments-estim}.

\subsection{Interpretation of the solution}\label{sec:existence-strato}

The basic idea behind the Stratonovich interpretation of equation~\eqref{eq:she-intro} can be roughly expressed as follows. We consider a sequence $\{\dot{W}^n; n\geq1\}$ of smooth approximations of $\dot{W}$, which can be thought of as the mollification considered in \eqref{smoothed noise} with $\epsilon=1/n$. With this approximation in hand, let $\{u^n; n\geq1\}$ be the sequence of classical solutions associated with $\dot{W}^n$, that is $u^n$ is the solution of
\begin{equation}\label{eq:she-intro-approx}
\partial_{t} u^n_{t}(x) = \frac12 \Delta u^n_{t}(x) + u^n_{t}(x) \, \dot W^n_{t}(x), \quad t\in\R_{+}, x\in \R^{d},
\end{equation}
understood in the classical Lebesgue sense. Then the Stratonovich solution of \eqref{eq:she-intro} is morally defined as the limit (if it exists) of the sequence $u^n$.

In some situations where $\dot{W}$ is not too rough, the above heuristic idea can be rigourously formulated within the so-called Young setting (see e.g. \cite[Section 5]{HHNT}). In order to extend these considerations to rougher noises (which overall corresponds to our objective in this study), more intricate machineries must come into the picture, as well as renormalization procedures.

\smallskip

Thus, in the companion paper \cite{CDOT2}, we have relied on the sophisticated \textit{theory of regularity structures} (as introduced by Hairer in \cite{hai-14}) to provide a complete treatment of the Stratonovich model, i.e. to show existence and uniqueness of a global solution, in a rough regime. For the sake of conciseness, we will here skip the details of this analysis (which involve the exhibition of an abstract solution map and the construction of a related $K$-rough path), and will directly provide the resulting convergence statement for the approximated equation \eqref{eq:she-intro-approx}.

\smallskip

To this end, we shall need the following piece of notation.

\begin{notation}
Let $\rho$ be the weight given by $\rho(s,x):=p_1(s)p_1(x)$ as considered in \eqref{smoothed noise} and define the approximated noise $\dw^n$ of $\dot{W}$ by $\dw^0:=0$ and for $n\geq 1$, 
\begin{align}\label{def:approximate noise}
\dw^n:=\partial_t  \partial_{x_1} \cdots \partial_{x_d} W^n ,
\end{align}
where $W^n:=\rho_n \ast W$ and $\rho_n(s,x):=2^{n(d+2)}\rho(2^{2n} s,2^n x)$. 

\smallskip

For any vector $(H_0, \bh)\in (0,1)^{d+1}$, we set from now on
\begin{equation}\label{notation:cn-h}
\cn_{H_0,\bh}(\la,\xi):=\frac{1}{|\la|^{2H_0-1}} \, \prod_{i=1}^d \frac{1}{|\xi_i|^{2H_i-1}} \, ,
\end{equation}
namely $c_0c_\bh\,\cn_{H_0,\bh}$ is the density of the measure $\mu_0\otimes\mu$ introduced in \eqref{eq:def-mu}. Let us also set 
\begin{equation}\label{cstt-c-n}
c_{H_0,\bh}=\big(\prod_{i=0}^d \al_{H_i} \big)^{-1/2}  \ \text{with} \ \al_{H_i}:=\int_{\R} d\xi \, \frac{|e^{\imath \xi}-1|^2}{|\xi|^{2H_i+1}}.
\end{equation}
Finally, note that when $2H_0+H < d+1$ (where the notation $H$ has been introduced in \eqref{defi:j-ast}) it can be shown that the integral below is finite (see \cite{CDOT2}):
\begin{equation}\label{eq:cj-c-n}
\cj_{\rho,H_0,\bh}:=
\int_{\R^{d+1}}  |\mathcal F{\rho}(\la,\xi)|^2  \mathcal F{p}(\la,\xi) \cn_{H_0,\bh}(\la,\xi)\, d\la d\xi .
\end{equation}
\end{notation}
With this notation in hand, we are ready to recall the main result of \cite{CDOT2} about the approximated equation.
\begin{definition}\label{defi:strato-sol}
Let $(H_0, \bh)\in (0,1)^{d+1}$ be a vector of Hurst parameters such that 
\begin{equation}\label{cond-h-strato}
d+\frac23< 2H_0+H \leq d+1\, .
\end{equation}
Fix $\al \in \R$ such that 
$$-\frac43 < \al < -(d+2)+2H_0+H \, ,$$
as well as an arbitrary time horizon $T>0$ and an initial condition $\psi\in L^\infty(\R^d)$. Finally, let $(\mathfrak{c}_{\rho,H_0,\bh}^{(n)})_{n\geq 1}$ be the sequence defined for every $n\geq 1$ as
\begin{align}
&\mathfrak{c}_{\rho,H_0,\bh}^{(n)}:=\nonumber\\
&
\begin{cases}
c_{H_0,\bh}^2 \, 2^{2n(d+1-(2H_0+H))} \cj_{\rho,H_0,\bh}   & \text{if} \ 2H_0+H<d+1\\
c_{H_0,\bh}^2 \int_{|\la|+|\xi|^2\geq 2^{-2n}}  |\mathcal F{\rho}(\la,\xi)|^2  \mathcal F{p}(\la,\xi) \cn_{H_0,\bh}(\la,\xi)\, d\la d\xi& \text{if} \ 2H_0+H=d+1
\end{cases}\label{renormal}
\end{align}
and consider the sequence $(u^{n})_{n\geq 1}$ of classical solutions of the equation 
\begin{equation}\label{eq:u-square-n}
\left\{
\begin{array}{l}
\partial_{t} u^{n}  = \frac12 \Delta u^{n} + u^{n}\, \dw^n-\mathfrak{c}_{\rho,H_0,\bh}^{(n)}\, u^{n} \, , \quad t\in [0,T],\,  x\in \R^{d} \, ,\\
u^{n}_0(x)=\psi(x) \ .
\end{array}
\right.
\end{equation}
Then the sequence $u^n$ converges almost surely in $L^\infty([0,T]\times \R^d)$. The limit $u=\lim{u^n}$ is said to be the Stratonovich solution to the renormalized equation of \eqref{eq:she-intro}.
\end{definition}

\color{black}


\subsection{Moments estimates for the Stratonovich equation}\label{section:moments-estim}


Similarly to the Skorohod situation (see Section \ref{section:Skorohod case}), we now would like to show that the moments of the renormalized Stratonovich solution $u $ (as introduced in Definition \ref{defi:strato-sol}) are all finite in the subcritical regime~\eqref{eq:sko-subcritical-regime}. With this objective in mind, observe first that  Definition \ref{defi:strato-sol} only guarantees \emph{almost sure} convergence of $u^{n}$ to $u $, and does not provide any estimate on the moments of $u$. In order to go further, our strategy will somehow consist in bounding the moments of the Stratonovich solution $u$ in terms of those of the Skorohod solution $u^\diamond$ (at the level of their respective approximations), and then exploiting the estimates of Section \ref{section:Skorohod case} for the moments of $u^\diamond$. 

{For more clarity, and although both Proposition~\ref{prop:finite-moments-subcritical} and  Definition~\ref{defi:strato-sol} are valid for $H_{0}\le\frac12$ (see \cite{Ch19} for the Skorohod case), we will restrict our analysis to the case where $H_0>\frac12$ in the sequel. Besides, for our comparison strategy to be possible, we of course need to focus on situations where both $u $ and $u^\diamond$ are well defined. \textit{Therefore, for the remainder of the section, we will assume that both conditions \eqref{eq:sko-subcritical-regime} and \eqref{cond-h-strato} are met.}}

As mentioned above, the idea will be to compare $u^\diamond$ and $u $ through the Feynman-Kac representations of their respective approximations. Just as in Section \ref{section:Skorohod case} ({resp. Definition \ref{defi:strato-sol}}) we denote by $u^{n,\diamond}$ (resp. $u^{n}$) the approximation of $u^\diamond$ (resp. $u $). Namely $u^{n,\diamond}$ is the solution of \eqref{eq:heat-sko-approx}, while $u^{n}$ solves \eqref{eq:u-square-n}, for every fixed $n\geq 1$. Let us assume that both solutions start from an initial condition $\psi\in L^{\infty}(\R^{d})$. At this point, let us recall that according to \cite[Proposition 5.2]{HN} (and just as in Proposition~\ref{approx solution}), the Feynman-Kac representation of $u^{n,\diamond}$ is given by
\begin{equation}\label{fk-u-di}
u _{t}^{n,\di}(x)=\mathbb{E}  \Big[ \psi(B^x_t) \exp \Big(V^{n,W}_t(B^x)-\frac12 \beta^{n,B}_{t} \Big)\Big]  \, ,
\end{equation}
where $B^x$ is a standard $d$-dimensional Wiener process starting at $x$, independent of $W$, and where $V^{n,W}, \beta^n_t$ are given by
$$V^{n,W}_t(B^x):=\int_0^t du \, \dw^{n}(u,B^x_{t-u}),  
\quad \text{and} \quad 
\beta^{n,B}_{t}:=\mathbb{E}_W\big[| V^{n,W}_t(B^x)|^2\big] \, .$$
On the other hand, it is clear that for every fixed $n\geq 1$, equation \eqref{eq:u-square-n} is a standard linear parabolic equation, for which a classical Feynman-Kac formula can be applied. This  yields the expression
\begin{equation}\label{fk-u-s} 
u^{n}_t(x)=\mathbb{E}  \Big[ \psi(B^x_t) \exp \Big(V^{n,W}_t(B^x)-\mathfrak{c}_{\rho,H_0,\bh}^{(n)} \, t \Big)\Big] \, ,
\end{equation}
where $\mathfrak{c}^{(n)}_{\rho,H_0,\bh}$ stands for the renormalization constant in equation \eqref{eq:u-square-n}.

\

Based on the two representations \eqref{fk-u-di} and \eqref{fk-u-s}, we deduce that the desired comparison between $u^{n,\diamond}$ and $u^{n}$ morally reduces to a comparison between $\frac12 \beta^{n,B}_{t}$ and $\mathfrak{c}^{(n)}_{\rho,H_0,\bh}\, t$. The following uniform estimate will thus be an important step in the procedure:

\begin{proposition}\label{prop:r-n-convol}
Assume that $H_0>\frac12$ and {that the condition \eqref{cond-h-strato} is satisfied}. 
Then for every fixed $t\geq 0$, it holds that
\begin{equation}\label{link-beta-cstt-convol-bis}
\sup_{n\geq 1} \bigg| \mathfrak{c}^{(n)}_{\rho,H_0,\bh} \,  t-\frac12 \mathbb{E} \big[\beta^{n,B}_{t}\big]\bigg|  <  \infty .
\end{equation}
\end{proposition}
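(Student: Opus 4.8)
The plan is to reduce both $\mathfrak{c}^{(n)}_{\rho,H_0,\bh}\,t$ and $\frac12\mathbb{E}[\beta^{n,B}_t]$ to explicit spectral integrals against the measure $\mu_0\otimes\mu$, and then to show that their difference is bounded uniformly in $n$ by pointwise comparison of the integrands. The starting point is the computation of $\mathbb{E}[\beta^{n,B}_t]=\mathbb{E}_B\mathbb{E}_W[|V^{n,W}_t(B^x)|^2]$. Using $V^{n,W}_t(B^x)=\int_0^t du\,\dw^n(u,B^x_{t-u})$ together with the covariance formula \eqref{eq:covariance-W-with-Fourier} and the scaling $\rho_n(s,x)=2^{n(d+2)}\rho(2^{2n}s,2^nx)$, one gets after a change of variables
\[
\frac12\mathbb{E}\big[\beta^{n,B}_t\big]
= c_0c_\bh \int_{\R^{d+1}} \cn_{H_0,\bh}(\la,\xi)\, |\mathcal F\rho(2^{-2n}\la,2^{-n}\xi)|^2 \left(\int_{[0,t]^2} e^{\imath\la(u-v)}\,\mathbb{E}\big[e^{\imath\xi\cdot(B_u-B_v)}\big]\,du\,dv\right) d\la\, d\xi.
\]
Since $\mathbb{E}[e^{\imath\xi\cdot(B_u-B_v)}]=e^{-|\xi|^2|u-v|/2}$, the inner double integral is an explicit nonnegative function of $(\la,\xi,t)$; call it $\Phi_t(\la,\xi)$. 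On the other hand, from \eqref{renormal} and the rescaling, $\mathfrak{c}^{(n)}_{\rho,H_0,\bh}$ is (up to the constant $c_{H_0,\bh}^2$, which by \eqref{cstt-c-n}–\eqref{notation:cn-h} matches $c_0c_\bh$ once the normalization is unwound) of the form $\int \cn_{H_0,\bh}(\la,\xi)|\mathcal F\rho(2^{-2n}\la,2^{-n}\xi)|^2\mathcal F p(\la,\xi)\,d\la\,d\xi$ when $2H_0+H<d+1$, and the analogous cut-off integral when $2H_0+H=d+1$.

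**The main step** is then to bound, uniformly in $n$,
\[
\bigg| \int_{\R^{d+1}} \cn_{H_0,\bh}(\la,\xi)\, |\mathcal F\rho(2^{-2n}\la,2^{-n}\xi)|^2 \big(\Phi_t(\la,\xi) - t\,\mathcal F p(\la,\xi)\big)\, d\la\, d\xi\bigg| < \infty,
\]
(with the obvious modification on the cut-off region in the critical case $2H_0+H=d+1$). The key observation is that $\mathcal F p(\la,\xi)=\int_0^\infty e^{-r(|\xi|^2/2+\imath\la)}dr$ formally, so $t\,\mathcal F p(\la,\xi)$ is the "stationary/infinite-horizon" version of $\Phi_t(\la,\xi)=\int_{[0,t]^2}e^{\imath\la(u-v)}e^{-|\xi|^2|u-v|/2}\,du\,dv$. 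One writes $\Phi_t(\la,\xi)=2\int_0^t (t-r)\,e^{-r|\xi|^2/2}\cos(\la r)\,dr$ and $t\,\mathcal F p(\la,\xi)=2t\,\mathrm{Re}\int_0^\infty e^{-r(|\xi|^2/2+\imath\la)}dr$ (interpreted as $t\,\frac{|\xi|^2/2}{\la^2+|\xi|^4/4}$), and estimates the difference into two pieces: the "boundary" term $-2\int_0^t r\,e^{-r|\xi|^2/2}\cos(\la r)\,dr$, which is controlled by $\int_0^\infty r\,e^{-r|\xi|^2/2}\,dr = 4/|\xi|^4$, and the "tail" term $-2t\int_t^\infty e^{-r|\xi|^2/2}\cos(\la r)\,dr$, controlled similarly. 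In both cases one gains enough decay in $|\xi|$ (and integrability in $\la$ via oscillation) so that, combined with $\cn_{H_0,\bh}(\la,\xi)$ and the bound $|\mathcal F\rho(2^{-2n}\la,2^{-n}\xi)|^2\le \|\mathcal F\rho\|_\infty^2 \wedge (C(2^{-2n}|\la|+2^{-n}|\xi|)^{-M})$, the integral converges uniformly in $n$. The subcriticality assumption \eqref{eq:sko-subcritical-regime}, i.e. $d-H<1$ together with $4(1-H_0)+2(d-H)+(d_*-2H_*)<4$, is exactly what guarantees the integrability of $\cn_{H_0,\bh}(\la,\xi)\big(\Phi_t(\la,\xi)-t\mathcal Fp(\la,\xi)\big)$ near $\xi=0$, near $\xi_i=0$ and at infinity.

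**The hard part** will be the bookkeeping in the critical case $2H_0+H=d+1$, where $\mathfrak{c}^{(n)}_{\rho,H_0,\bh}$ is defined with the cut-off $|\la|+|\xi|^2\ge 2^{-2n}$ rather than as a full integral, so that one must show that the omitted low-frequency region $\{|\la|+|\xi|^2<2^{-2n}\}$ contributes negligibly (indeed tends to $0$) to $\frac12\mathbb{E}[\beta^{n,B}_t]$ as well; here one uses that on that region $\Phi_t(\la,\xi)\approx t^2$ while the measure $\cn_{H_0,\bh}\,d\la\,d\xi$ of the region is $O(2^{-2n\cdot(\text{something positive})})$. A second delicate point is matching the normalization constants: one must verify that the constant $c_{H_0,\bh}^2$ appearing in \eqref{renormal} (defined via the $\al_{H_i}$ in \eqref{cstt-c-n}) combines with the $2^{2n(d+1-(2H_0+H))}$ factor and the rescaled $\mathcal F\rho$ to reproduce exactly the density $c_0c_\bh\,\cn_{H_0,\bh}$ of $\mu_0\otimes\mu$, so that the leading (divergent as $n\to\infty$) parts of $\mathfrak{c}^{(n)}_{\rho,H_0,\bh}\,t$ and $\frac12\mathbb{E}[\beta^{n,B}_t]$ cancel; this is essentially a consistency check with the construction in the companion paper \cite{CDOT2}, and once granted, the remainder is the absolutely convergent integral estimated above. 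I expect the $|\xi|\to 0$ and $|\xi_i|\to 0$ analysis — where the factors $|\xi_i|^{1-2H_i}$ with $H_i>\frac12$ are singular and one must extract the extra $|\xi|^{-4}$-type decay from $\Phi_t-t\mathcal Fp$ — to be the genuinely technical heart of the argument.
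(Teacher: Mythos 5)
Your overall architecture is exactly the paper's: compute $\tfrac12\mathbb{E}[\beta^{n,B}_t]$ in Fourier variables against the density $\cn_{H_0,\bh}$ of \eqref{notation:cn-h}, recognize $t\,\mathcal{F}p(\la,\xi)=t/(\tfrac{|\xi|^2}{2}+\imath\la)$ as the part that reproduces $\mathfrak{c}^{(n)}_{\rho,H_0,\bh}\,t$ (this is precisely how the paper arrives at the remainder $r^n_t$ in \eqref{eq:r^n_t-bis}), bound $|\mathcal{F}\rho_n|^2$ uniformly by a constant, and reduce everything to the finiteness of a fixed ($n$-independent) integral such as \eqref{condi-domi-bis}; in the case $2H_0+H=d+1$ the paper, like you, treats the low-frequency region separately, though it first rescales the cut-off $|\la|+|\xi|^2\geq 2^{-2n}$ into $|\la|+|\xi|^2\geq 1$ with $\rho_n$, which avoids tracking a shrinking region.

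There are, however, two points where your justification of the key integrability step would not close as written. First, you attribute the convergence of $\int \cn_{H_0,\bh}\,(\Phi_t-t\mathcal{F}p)$ to the subcritical condition \eqref{eq:sko-subcritical-regime}. That condition is not a hypothesis of the proposition, and it is not the right one: near $(\la,\xi)=(0,0)$ the needed bound (obtained in the paper by an interpolation over exponents $\tau_0,\ldots,\tau_d$ with $\sum_i\tau_i=1$) requires $2H_0+H<d+1$, i.e.\ the upper bound in \eqref{cond-h-strato} (with the cut-off in \eqref{renormal} taking over in the equality case), and \eqref{eq:sko-subcritical-regime} does not imply $2H_0+H\leq d+1$; at infinity the requirement is $2H_0+H>d$, which follows from the lower bound in \eqref{cond-h-strato}. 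So the estimate is driven entirely by \eqref{cond-h-strato}, and a proof resting on \eqref{eq:sko-subcritical-regime} both assumes something extra and still misses the case $d-H=1$ covered by the statement. Second, bounding your ``boundary'' term by $\int_0^\infty r e^{-r|\xi|^2/2}dr=4/|\xi|^4$ uniformly in $\la$ is too lossy: against $\mu_0(d\la)\propto|\la|^{1-2H_0}d\la$ the $\la$-integral then diverges at infinity (since $H_0<1$). You do flag ``integrability in $\la$ via oscillation,'' but this must be made quantitative by keeping the resolvent decay, e.g.\ $|\int_0^t re^{-rz}dr|\lesssim |z|^{-2}+t|z|^{-1}$ and $t|\int_t^\infty e^{-rz}dr|\leq t|z|^{-1}$ with $z=\tfrac{|\xi|^2}{2}+\imath\la$, which is exactly the structure the paper exploits in bounding $\ca_t$ over the unbounded regions. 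With these two corrections your argument becomes the paper's proof.
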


\begin{proof}
Following the definition \eqref{renormal} of $\mathfrak{c}_{\rho,H_0,\bh}^{(n)}$, we have to treat the cases $2H_0+H<d+1$ and $2H_0+H=d+1$ separately.

\smallskip

\noindent
\underline{First case: $2H_0+H<d+1$.} Let us recall that in this situation, $\mathfrak{c}^{(n)}_{\rho,H_0,\bh}$ is defined as
\begin{eqnarray}\label{d1-bis}
\mathfrak{c}^{(n)}_{\rho,H_0,\bh}
&:=&
\mathfrak{c}_{\rho,H_0,\bh} \cdot 2^{2n (d+1-(2H_0+H))}  \notag\\
&=&
c_{H_0,\bh}^2 \int_{\R^{d+1}} d\la d\xi\, |\cf\rho_n(\la, \xi)|^2  \cf p(\la,\xi) \cn_{H_0,\bh}(\la,\xi) \, ,
\end{eqnarray}
where $\cn_{H_0,\bh}(\la,\xi)$ is the quantity defined in \eqref{notation:cn-h}.
Keeping this expression in mind, and using the covariance formula \eqref{eq:covariance-W-with-Fourier}, we can recast the expression \eqref{eq:def-beta-epsilon} for $\beta^{n,B}_{t}$ as
\begin{align*}
&\beta^{n,B}_{t}=\be\big[| V^{n,W}_t(B^x)|^2\big]\\
&= c_{H_0,\bh}^2 \int_{\R^{d+1}} d\la d\xi\, |\cf \rho_n(\la,\xi)|^2 \cn_{H_0,\bh}(\la,\xi)\int_{[0,t]^{2}}dudv\,  e^{\ii  \lp \lambda(v-u)+\xi  \cdot  (B_{u} - B_{v})\rp}  \, .
\end{align*}
Therefore invoking elementary symmetry and integration arguments, we get
\begin{align}
&\frac12 \mathbb{E} \big[\beta^{n,B}_{t}\big]\nonumber\\
&= c_{H_0,\bh}^2 \int_{\R^{d+1}} d\la d\xi\, |\cf \rho_n(\la,\xi)|^2 \cn_{H_0,\bh}(\la,\xi)\int_0^t du\int_0^u dv \, e^{-\imath \la (u-v)} e^{-\frac{|\xi|^2}{2}(u-v)}\label{expr-expect-beta-n}\\
&= c_{H_0,\bh}^2 \int_{\R^{d+1}} d\la d\xi\, |\cf\rho_n(\la,\xi)|^2 \cn_{H_0,\bh}(\la,\xi)\int_0^t dv \, (t-v) e^{-v(\frac{|\xi|^2}{2}+\imath \la)} \nonumber\\
&=  c_{H_0,\bh}^2 \int_{\R^{d+1}} d\la d\xi\, |\cf\rho_n(\la,\xi)|^2 \cn_{H_0,\bh}(\la,\xi) 
\left[ \frac{t}{\frac{|\xi|^2}{2}+\imath \la}-\frac{1}{\frac{|\xi|^2}{2}+\imath \la}\int_0^t dv \, e^{-v(\frac{|\xi|^2}{2}+\imath \la)}\right].\label{trick-beta-n}
\end{align}
Hence owing to the fact that the Fourier transform of the heat kernel $p$ satisfies $\mathcal{F}p(\lambda,\xi)=(|\xi|^2/2+\imath\lambda)^{-1}$, together with \eqref{d1-bis}, we get
\begin{align}\label{eq:Ebeta^n and c^n-bis}
\frac12 \mathbb{E} \big[\beta^{n,B}_{t}\big]
=
\mathfrak{c}^{(n)}_{\rho,H_0,\bh} \, t-r^n_t \, ,
\end{align}
with a constant $r^n_t$ defined by
\begin{align}\label{eq:r^n_t-bis}
r^n_t:=2c_{H_0,\bh}^2 \int_{\R^{d+1}} d\la d\xi\, |\cf\rho_n(\la,\xi)|^2 \frac{\cn_{H_0,\bh}(\la,\xi) }{|\xi|^2+2\imath \la}\int_0^t dv \, e^{-v(\frac{|\xi|^2}{2}+\imath \la)} \, .
\end{align}
Since $\sup_{n\geq 1}|\cf\rho_n(\la,\xi)|^2 \lesssim 1$, our claim~\eqref{link-beta-cstt-convol-bis} amounts to prove that
\begin{equation}\label{condi-domi-bis}
\ca_{t}\equiv
\int_{\R^{d+1}} d\la d\xi\ \frac{\cn_{H_0,\bh}(\la,\xi) }{\big||\xi|^2+2\imath \la\big|}\bigg|\int_0^t dv \, e^{-v(\frac{|\xi|^2}{2}+\imath \la)}\bigg|  <  \infty  .
\end{equation}
In order to bound $\ca_t$ one must decompose the domain $\R^{d+1}$ into a centered ball and some unbounded domains. 

\smallskip

Regarding integration over the ball $\{|\la|\leq 1, |\xi_1|\leq 1,\ldots,|\xi_d|\leq 1\}$, one has, for all $\tau_0,\tau_1,\ldots,\tau_d\in [0,1]$ such that $\sum_{i=0}^d\tau_i=1$,
\begin{align}
&\int_{\{|\la|\leq 1, |\xi_1|\leq 1,\ldots,|\xi_d|\leq 1\}} d\la d\xi\ \frac{\cn_{H_0,\bh}(\la,\xi) }{\big||\xi|^2+2\imath \la\big|}\bigg|\int_0^t dv \, e^{-v(\frac{|\xi|^2}{2}+\imath \la)}\bigg| \label{integra-on-bounded-1}\\
&\leq t\int_{\{|\la|\leq 1, |\xi_1|\leq 1,\ldots,|\xi_d|\leq 1\}} \ \frac{d\la d\xi}{\big||\xi|^2+2\imath \la\big|}\frac{1}{|\la|^{2H_0-1}}\prod_{i=1}^d \frac{1}{|\xi_i|^{2H_i-1}}\nonumber\\
&\leq t\bigg(\int_{|\la|\leq 1} \frac{d\la}{|\la|^{2H_0+\tau_0-1}}\bigg)\bigg(\prod_{i=1}^d \int_{|\xi_i|\leq 1}\frac{d\xi_i}{|\xi_i|^{2H_i+2\tau_i-1}}\bigg). \label{integra-on-bounded-2}
\end{align}
At this point, note that due to the condition $2H_0+H<d+1$, we can actually pick the $\tau_i$'s such that $2H_0+\tau_0-1<1$ and $2H_i+2\tau_i-1<1$ for $i=1,\ldots,d$. Indeed, summing the latter constraints, we get $4H_0+2H+2\sum_{i=0}^d \tau_i<6$, which, combined with the relation $\sum_{i=0}^d\tau_i=1$, brings us back to the condition $2H_0+H<d+1$. Choosing this set of parameters in \eqref{integra-on-bounded-2}, we can conclude that the integral in \eqref{integra-on-bounded-1} is finite.

\smallskip

As for the integral on the remaining unbounded domains, we will only focus here on the region $\{|\la|\geq 1, |\xi_1|\geq 1,\ldots,|\xi_d|\geq 1\}$ in the right hand side of \eqref{condi-domi-bis}. In this case, for all $\tau_0,\tau_1,\ldots,\tau_d \in [0,1]$ such that $\sum_{i=0}^d \tau_i =1$, and recalling again the expression \eqref{notation:cn-h} for $\cn_{H_0,\bh}$, we have
\begin{align}
&\int_{|\la|\geq 1, |\xi_1|\geq 1,\ldots,|\xi_d|\geq 1} d\la d\xi\ \frac{\cn_{H_0,\bh}(\la,\xi) }{\big||\xi|^2+2\imath \la\big|}\bigg|\int_0^t dv \, e^{-v(\frac{|\xi|^2}{2}+\imath \la)}\bigg|\label{integra-on-unbounded}\\
\lesssim& \int_{|\la|\geq 1, |\xi_1|\geq 1,\ldots,|\xi_d|\geq 1} d\la d\xi\, \frac{\cn_{H_0,\bh}(\la,\xi)}{\la^2+\xi_1^4+\ldots+\xi_d^4}\nonumber\\ 
\lesssim& \bigg( \int_{|\la|\geq 1} \frac{d\la}{|\la|^{2H_0+2\tau_0-1}}\bigg) \prod_{i=1}^d  \int_{|\xi_i|\geq 1} \frac{d\xi_i}{|\xi_i|^{\lastchange{2}H_i+4\tau_i-1}}\, .\label{bound:technical in Prop Ebeta^n and c^n-bis}
\end{align}
We can now pick the $\tau_i$'s such that $2H_0+2\tau_0>2$ and $2H_i+4\tau_i >2$ for $i=1,\ldots,d$, due to the fact that we have
$$2(2H_0+2\tau_0)+\sum_{i=1}^d (2H_i+4\tau_i)=2(2H_0+H)+4 > 4+2d \, ,$$
where the last inequality is ensured by \eqref{cond-h-strato}. Choosing this set of parameters in \eqref{bound:technical in Prop Ebeta^n and c^n-bis}, we obtain that the integral in \eqref{integra-on-unbounded} is finite. 

\smallskip

The estimate of the integral on the other domains can then clearly be done along similar arguments, which achieves the proof of \eqref{condi-domi-bis}.

\

\noindent
\underline{Second case: $2H_0+H=d+1$.} In this situation, $\mathfrak{c}^{(n)}_{\rho,H_0,\bh}$ is defined as
$$\mathfrak{c}^{(n)}_{\rho,H_0,\bh}:=c_{H_0,\bh}^2 \int_{|\la|+|\xi|^2\geq 2^{-2n}}  |\mathcal F{\rho}(\la,\xi)|^2  \mathcal F{p}(\la,\xi) \cn_{H_0,\bh}(\la,\xi)\, d\la d\xi$$
which, thanks to the relation $2H_0+H=d+1$, can also be written as
\begin{align*}
&\mathfrak{c}^{(n)}_{\rho,H_0,\bh}=c_{H_0,\bh}^2 \int_{|\la|+|\xi|^2\geq 1} d\la d\xi\, |\cf\rho_n(\la,\xi)|^2  \cf p(\la,\xi) \cn_{H_0,\bh}(\la,\xi) \, .
\end{align*}
Therefore, going back to the expression \eqref{expr-expect-beta-n} of $\frac12 \mathbb{E} \big[\beta^{n,B}_{t}\big]$, and performing the same trick as in \eqref{trick-beta-n}, we get that
\begin{align*}
&\mathfrak{c}^{(n)}_{\rho,H_0,\bh} \,  t-\frac12 \mathbb{E} \big[\beta^{n,B}_{t}\big]\\
&=-c_{H_0,\bh}^2 \int_{|\la|+|\xi|^2\leq 1} d\la d\xi\, |\cf \rho_n(\la,\xi)|^2 \cn_{H_0,\bh}(\la,\xi)\int_0^t du\int_0^u dv \, e^{-\imath \la (u-v)} e^{-\frac{|\xi|^2}{2}(u-v)}\\
&+c_{H_0,\bh}^2 \int_{|\la|+|\xi|^2\geq 1} d\la d\xi\, |\cf\rho_n(\la,\xi)|^2 \cn_{H_0,\bh}(\la,\xi) \left[ \frac{1}{\frac{|\xi|^2}{2}+\imath \la}\int_0^t dv \, e^{-v(\frac{|\xi|^2}{2}+\imath \la)}\right],
\end{align*}
and accordingly
\begin{align*}
&\sup_{n\geq 1}\bigg|\mathfrak{c}^{(n)}_{\rho,H_0,\bh} \,  t-\frac12 \mathbb{E} \big[\beta^{n,B}_{t}\big]\bigg|\lesssim t\int_{|\la|+|\xi|^2\leq 1} d\la d\xi\, \cn_{H_0,\bh}(\la,\xi)\\
&\hspace{4cm}+\int_{|\la|+|\xi|^2\geq 1} d\la d\xi\ \frac{\cn_{H_0,\bh}(\la,\xi) }{\big||\xi|^2+2\imath \la\big|}\bigg|\int_0^t dv \, e^{-v(\frac{|\xi|^2}{2}+\imath \la)}\bigg| .
\end{align*}
By the very definition of $\cn_{H_0,\bh}$, the first integral in the latter bound is clearly finite. As for the second integral, its finiteness can be easily derived from similar arguments to those used for \eqref{integra-on-unbounded}, which achieves the proof of our assertion.
\end{proof}

\

\color{black}

Based on relations \eqref{fk-u-di}, \eqref{fk-u-s} and \eqref{link-beta-cstt-convol-bis}, the fact that one can transfer subcritical estimates from the Skorohod to the Stratonovich equation can be explained in the following way: {under the subcritical condition \eqref{eq:sko-subcritical-regime}}, the fluctuations of the random variable $Y^n_t$ given by
\begin{align}Y^n_t:=\beta^{n,B}_{t}-\E \left[\beta^{n,B}_{t}\right] \, 
\label{notation:Y^n_t}\end{align}
are much smaller than the fluctuation of the random variable $V^{n,W}_t(B)$ featuring in \eqref{fk-u-di} and~\eqref{fk-u-s}. This assertion is quantified in the following lemma.
\begin{lemma}\label{lem: integrability of Y}
Assume that $H_0>\frac12$ and that {the conditions \eqref{eq:sko-subcritical-regime} and \eqref{cond-h-strato} are both satisfied}. For $n\ge 1$, let $Y^n_t$ be the random variables defined by \eqref{notation:Y^n_t}. Then for all $\theta\geq0$ and $t\geq 0$, we have 
\begin{align}\label{eq:integrability of Y}\sup_{n\geq 1}\E\left[e^{\theta Y^n_t}\right]<\infty.\end{align}
\end{lemma}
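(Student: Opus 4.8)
The plan is to reduce the statement to a uniform-in-$n$ bound on the even moments of $Y^n_t$, and then sum the exponential series. First, since $\beta^{n,B}_t$ depends on $W$ only through its covariance (it equals $\mathbb{E}_W[|V^{n,W}_t(B^x)|^2]$), replacing $\dw$ by $\sqrt\theta\,\dw$ multiplies $Y^n_t$ by $\theta$ without affecting the subcriticality condition \eqref{eq:sko-subcritical-regime} nor the constraint $H_0>\tfrac12$; hence it is enough to treat $\theta=1$. Moreover, since $e^x=\sum_m x^m/m!$, the bound \eqref{eq:integrability of Y} will follow, uniformly in $n$, once we establish an estimate of the form $\sup_{n\ge 1}\E[(Y^n_t)^{2k}]\le C_t^{2k}\,k!$ for all $k\ge 0$, the odd moments being handled by the Cauchy--Schwarz interpolation $\E[|Y^n_t|^{2k+1}]\le \E[(Y^n_t)^{2k}]^{1/2}\E[(Y^n_t)^{2k+2}]^{1/2}$; indeed $\sum_k \theta^{2k}C_t^{2k}k!/(2k)!<\infty$ for every $\theta$, because $k!/(2k)!\le k^{-k}$.

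To obtain this moment estimate I would first recast $\beta^{n,B}_t$ in Fourier modes. Using the covariance identity \eqref{eq:covariance-W-with-Fourier} exactly as in \eqref{a01}, and writing $c_{H_0,\bh}$, $\cn_{H_0,\bh}$ for the quantities introduced in \eqref{cstt-c-n} and \eqref{notation:cn-h}, one gets $\beta^{n,B}_t=c_{H_0,\bh}^2\int_{\R^{d+1}}|\cf\rho_n(\la,\xi)|^2\,\cn_{H_0,\bh}(\la,\xi)\,|\int_0^t e^{\ii(\xi\cdot B_u-\la u)}du|^2\,d\la\,d\xi$, so that, with $G^{\la,\xi}_t:=\int_{[0,t]^2}e^{-\ii\la(u-v)}e^{\ii\xi\cdot(B_u-B_v)}\,du\,dv$,
\[
Y^n_t=c_{H_0,\bh}^2\int_{\R^{d+1}}|\cf\rho_n(\la,\xi)|^2\,\cn_{H_0,\bh}(\la,\xi)\,\bigl(G^{\la,\xi}_t-\E[G^{\la,\xi}_t]\bigr)\,d\la\,d\xi .
\]
Expanding $(Y^n_t)^{2k}$ and using $e^{\ii\xi\cdot(B_u-B_v)}-\E[e^{\ii\xi\cdot(B_u-B_v)}]=e^{\ii\xi\cdot(B_u-B_v)}-e^{-\frac{|\xi|^2}{2}|u-v|}$, one is reduced to a $2k$-fold Fourier integral of the centered Gaussian expectation $\E\bigl[\prod_{j=1}^{2k}(e^{\ii\xi_j\cdot(B_{u_j}-B_{v_j})}-e^{-\frac{|\xi_j|^2}{2}|u_j-v_j|})\bigr]$. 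The centering is the crucial point: through an inclusion--exclusion expansion it forces every index $j$ to be linked to at least one other index via a nontrivial time overlap, and the associated combinatorial weight is exactly what will convert $(2k)!$ into $k!$ in the target bound.

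The remaining task is to control each of these ``diagram'' integrals, uniformly in $n$. I would bound the Gaussian factors using $1-e^{-x}\le x$ together with the Cauchy--Schwarz bound $|\mathrm{Cov}(\xi_j\cdot(B_{u_j}-B_{v_j}),\xi_l\cdot(B_{u_l}-B_{v_l}))|\le|\xi_j|\,|\xi_l|\sqrt{|u_j-v_j|\,|u_l-v_l|}$, which precisely absorbs the Gaussian damping $e^{-\frac{|\xi_j|^2}{2}|u_j-v_j|}$; then integrate out the time variables against the time covariance kernel carried by $e^{-\ii\la(u-v)}|\la|^{1-2H_0}$, where $H_0>\tfrac12$ ensures local integrability in time; and finally perform the spatial integrals against $\prod_i|\xi_{j,i}|^{1-2H_i}$. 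Finiteness of each such integral is exactly the content of the subcriticality condition \eqref{eq:sko-subcritical-regime} (one needs the full inequality, not just $d-H<1$). Uniformity in $n$ then follows from $\sup_n|\cf\rho_n(\la,\xi)|^2\le1$ and the scaling relation $\cf\rho_n(\la,\xi)=\cf\rho(2^{-2n}\la,2^{-n}\xi)$: the point is that the centering of $Y^n_t$ removes precisely the divergent diagonal contribution, and what is left is controlled by the same splitting of $\R^{d+1}$ into a centered ball and unbounded cones --- with Hölder exponents $\tau_0,\dots,\tau_d$ adapted to \eqref{eq:sko-subcritical-regime} and \eqref{cond-h-strato} --- that underlies the proof of Proposition \ref{prop:r-n-convol}. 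Collecting the combinatorial factors then yields $\sup_n\E[(Y^n_t)^{2k}]\le C_t^{2k}k!$.

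I expect the main obstacle to be the bookkeeping that makes this last step uniform in $n$ while preserving the factorial gain: one must show \emph{simultaneously} that the centering cancels the $n$-divergence of the diagonal and that the ``linked'' combinatorics beats $1/(2k)!$ for every $k$ with $n$-independent constants --- this is where strict subcriticality is essential. As a technically lighter alternative, one may instead (i) use a dyadic decomposition of $[0,t]$ together with the independence of Brownian increments over disjoint subintervals to bound $Y^n_t$ pathwise by a sum --- whose number of terms depends only on $t$, not on $n$ --- of finitely many $\alpha$-type functionals over subintervals plus the value of $Y^n$ on a short interval, then (ii) control the $\alpha$-type terms by the exponential integrability already recorded in Proposition \ref{prop:FK-representation} (which is robust under the mollification $\rho_n$), inserting a Hölder inequality to handle the lack of independence, so that only the short-interval estimate has to be treated by the direct moment computation above. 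In either route, once the moment bound $\sup_n\E[(Y^n_t)^{2k}]\le C_t^{2k}k!$ is in hand, summing the exponential series and undoing the noise rescaling gives \eqref{eq:integrability of Y} for all $\theta\ge 0$.
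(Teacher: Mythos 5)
Your reduction to a uniform moment bound is fine in principle, but the bound you propose to prove, $\sup_{n}\E[(Y^n_t)^{2k}]\le C_t^{2k}\,k!$, is both unproven and, in part of the admissible parameter range, false. Under \eqref{eq:sko-subcritical-regime} and \eqref{cond-h-strato} one may have $d-H\in(\tfrac12,1)$ (take for instance $d=1$, $H_1=0.4$, $H_0=0.8$). In that regime the moments of the underlying functional grow at the sharp rate $C^m(m!)^{d-H}$ (this is the content of inequality (3.1) in \cite{Ch18}, and the matching lower bounds in Chen's work show the exponent $d-H$ cannot be improved), i.e.\ the upper tail is of stretched-exponential order $\exp(-c\,x^{1/(d-H)})$, strictly heavier than Gaussian; subtracting the mean changes the location of the distribution but not this tail shape, so a bound of order $C^{2k}k!\sim((2k)!)^{1/2}$ cannot hold uniformly in $n$. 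Your exponential series would still converge with the correct weaker bound $C^m(m!)^{d-H}$, $d-H<1$, but your proposal offers no proof of the asserted ``linking converts $(2k)!$ into $k!$'' estimate, and the tension you yourself flag is exactly where it breaks: once the damping $e^{-|\xi_j|^2|u_j-v_j|/2}$ has been spent absorbing the covariance factors, nothing is left to make the $\xi_j$-integral against $\prod_i|\xi_{j,i}|^{1-2H_i}d\xi_j$ converge at infinity. A further symptom of the gap is that your main route never uses the quantitative strength of \eqref{cond-h-strato}: condition \eqref{eq:sko-subcritical-regime} alone only gives $2H_0+H-d>0$, whereas the lemma genuinely requires $2H_0+H-d>\tfrac12$.

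The paper's proof avoids any diagram expansion. It uses Le Gall's dyadic decomposition of $[0,t]^2$ into blocks $A_{N,k}$, the independence of the Brownian increments attached to distinct blocks of the same generation, the spectral domination $\mu_0^n\le\mu_0$, $\mu^n\le\mu$ (which yields uniformity in $n$ for free), and the moment bound (3.1) of \cite{Ch18} to get $\E[(a^n_N)^m]\le C^m(m!)^{d-H}(t/2^N)^{(2H_0+H-d)m}$; the scheme of \cite{HHNT} (Theorem 4.6, around their equation (4.13)) then converts this into \eqref{eq:integrability of Y} precisely because the scaling exponent $\alpha=2H_0+H-d$ in \eqref{integrability Y_t key} exceeds $\tfrac12$ — this is where \eqref{cond-h-strato} enters, and why the centered series over dyadic generations is summable. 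Your ``technically lighter alternative'' points in this direction, but as written it is not the same argument: a pathwise splitting into finitely many off-diagonal $\al$-type terms plus ``a short interval'' cannot work, because the divergence of $\E[\beta^{n,B}_t]$ as $n\to\infty$ is produced along the whole diagonal of $[0,t]^2$, not near a single corner; one needs the full infinite dyadic hierarchy (or an equivalent mechanism) together with the $\alpha>\tfrac12$ summability, and in your sketch the diagonal contribution is sent back to the very direct computation whose key combinatorial step is the unproven one.
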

\begin{proof}
 We first fix $n\geq1$. Our proof is inspired by the computations in \cite[Theorem 4.6]{{HHNT}}, where the exponential integrability of a random variable similar to $Y^n_t$ is investigated under more restrictive assumptions on $H_1,\dots,H_d$. As in \cite{HHNT}, our analysis will be based on Le Gall's decomposition for $\beta^n_t$.  More specifically, we start the following construction: for $N\geq 1$ and $k=1,...,2^{N-1}$ 
 we set
$$
J_{N,k}=\left[\frac{(2k-2)t}{2^N},\frac{(2k-1)t}{2^N}\right),  I_{N,k}=\left[\frac{(2k-1)t}{2^N}, \frac{2kt}{2^N}\right), 
$$
and $A_{N,k}=J_{N,k}\times I_{N,k}$. 
Then we can decompose $\beta^{n,B}_{t}$ as
$$\beta_t^{n,B}
=
\sum_{N=1}^{\infty}\sum_{k=1}^{2^N-1}a_{N,k}^n,\ \text{where}\ a_{N,k}^n=\int_{A_{N,k}}\gamma_0^n(r-s)\gamma^n(B_r^x-B_s^x)drds,$$
where $\gamma^n_0$ (resp. $\gamma^n$) is the inverse Fourier transform of the measure \lastchange{$\mu_0^n :=\mu_0^{2^{-4n}}$ (resp. $\mu^n:=\mu^{2^{-2n}}$)} defined by \eqref{eq:def-mu-epsilon}. Furthermore, owing to the shape of $A_{N,k}$'s combined with the independence of the Brownian increments, one can write
\begin{align}\label{law of a_N^n}
a_{N,k}^n\stackrel{(d)}{=}a_N^n,\quad \text{with}\quad a_N^n=\int_{[0,t/2^N]^2}\gamma_0^n(t+s)\gamma^n(B_r-\tilde{B}_s)dsdr,
\end{align}
where $B$ and $\tilde{B}$ are two independent  Brownian motions starting from the origin. It can be shown, exactly along the same lines as in \cite[equation (4.13)]{HHNT}, that the exponential integrability of $Y_t^n$ amounts to the following estimate: for any arbitrarily small $\ep>0$ there exists a constant $C_{\ep}>0$ such that for all $m\ge 1$ we have
\begin{align}\label{integrability Y_t key}
\E[(a_N^n)^m]\leq C_{\ep}m!\left(\frac{C\ep}{2^N}\right)^{\alpha m},
\end{align}
with $\alpha>1/2$. In fact, we should point out that the exponential integrability in~\cite{HHNT} was obtained thanks to~\eqref{integrability Y_t key} with $\al=1$. However, by a closer examination of the proof in \cite{HHNT}, it is clear that the exponential integrability \eqref{eq:integrability of Y}  follows as long as  $\alpha>1/2$ in~\eqref{integrability Y_t key}. The remainder of our proof is thus devoted to justify~\eqref{integrability Y_t key}.

Recall from \eqref{eq:def-mu-epsilon}  that $\mu_0^n$ and $\mu^n$ are  dominated by $\mu_0$ and $\mu$ respectively, where $\mu_0$ and $\mu$ are introduced in \eqref{eq:def-mu}. Therefore for  any $\tau>0$ we have
\begin{align}
&\E \lc \left(\int_{[0,\tau]^2}\gamma_0^n(t+s)\gamma^n(B_r-\tilde{B}_s)dsdr\right)^m \rc
\nonumber\\
=&\int_{(\R^{d+1})^m}\mu_0^n(d\lambda)\mu^n(d\xi)
\left|\int_{[0,\tau]^m}\E\lc \prod_{k=1}^m e^{i(\lambda_ks_k+\xi_k \cdot B_{s_k})} \rc ds\right|^2\nonumber\\
\leq&\int_{(\R^{d+1})^m}\mu_0(d\lambda)\mu(d\xi)\left|\int_{[0,\tau]^m}
\E\lc \prod_{k=1}^m e^{i(\lambda_ks_k+\xi_k\cdot B_{s_k})} \rc ds\right|^2\nonumber\\
=&\E \left[ \lp \int_{[0,\tau]^2}\gamma_0(t+s)\gamma(B_r-\tilde{B}_s)dsdr \rp^{m}\right].\label{domination uniform in n}
\end{align}
On the other hand, by \cite[Inequality (3.1)]{Ch18}, we have
\begin{align}\label{Xia's sharp estimate on moments}
\E \lc \left(\int_{[0,\tau]^2}\gamma_0(t-s)\gamma(B_r-\tilde{B}_s)dsdr\right)^m \rc
\leq C^m(m!)^{d-H}{\tau}^{(2H_0+H-d)m},
\end{align}
where $C>0$ is a constant independent of $\tau>0$.  Now combining \eqref{law of a_N^n}, \eqref{domination uniform in n} and \eqref{Xia's sharp estimate on moments}, together with the trivial relation that $\gamma_0(t+s)\leq\gamma_0(t-s)$, we have for all $m\geq1$,
$$\E [a_N^n]^m\leq C^m(m!)^{d-H}\left(\frac{t}{2^N}\right)^{(2H_0+H-d)m}\, .$$
Note that the above control of moments of $a_N^n$ is uniform in $n$. Moreover, under our assumption $d-H<1$, we can conclude that for any $\epsilon>0$ the above becomes
$$\E [a_N^n]^m\leq C^m(m!)\left(\frac{\varepsilon\, t}{2^N}\right)^{(2H_0+H-d)m},$$
when $m$ is large enough.
Finally, observe that we assume $2H_0+H-d>\frac12$ {by \eqref{cond-h-strato}}. Condition \eqref{integrability Y_t key} is therefore satisfied, and the integrability of $Y_t^n$ (uniformly in $n$) follows. This finishes the proof of \eqref{eq:integrability of Y}.
\end{proof}

We are finally in a position to prove the main result of this section, that is the fact that the Stratonovich solution  admits finite moments of any order.
\begin{proposition}\label{prop:finite-moments-subcritical-strato}
Assume that $H_0>\frac12$ and that {the conditions \eqref{eq:sko-subcritical-regime} and \eqref{cond-h-strato} are both satisfied}. Let $u$ be the solution of the Stratonovich equation as given by Definition \eqref{defi:strato-sol}, with initial condition $\psi\equiv1$. Then for all $t\ge 0$ and $p\ge 1$ we have
\begin{equation*}
\be\lc |u _{t}(x)|^{p}  \rc = c_{p,t} < \infty.
\end{equation*}
\end{proposition}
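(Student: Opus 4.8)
The plan is to bound the moments of the approximations $u^{n}$ uniformly in $n$ through the Feynman--Kac representations \eqref{fk-u-di}--\eqref{fk-u-s}, and then transfer the bound to $u$ by Fatou, using the a.s.\ convergence $u^{n}_{t}(x)\to u_{t}(x)$ from Definition~\ref{defi:strato-sol}. By Lyapunov's inequality on $(\oom,\cf,\bp)$ it suffices to treat an integer $p\ge 2$. Fix such a $p$, take independent Brownian motions $B^{1},\dots,B^{p}$ (independent of $W$), and start from \eqref{fk-u-s} with $\psi\equiv 1$: since $(u^{n}_{t}(x))^{p}=\E[\exp(\sum_{j=1}^{p}V^{n,W}_{t}(B^{j,x})-p\,\mathfrak{c}^{(n)}_{\rho,H_{0},\bh}t)]$ and, conditionally on the $B^{j}$'s, the sum $\sum_{j}V^{n,W}_{t}(B^{j,x})$ is a centered $\bp$-Gaussian with variance $\sum_{j_{1},j_{2}}\alpha^{n,j_{1}j_{2}}_{t}$ (where $\alpha^{n,jj}_{t}=\beta^{n,B^{j}}_{t}$ and $\alpha^{n,j_{1}j_{2}}_{t}=\be[V^{n,W}_{t}(B^{j_{1},x})V^{n,W}_{t}(B^{j_{2},x})]$ is the mollified cross term), Fubini and a Gaussian computation give $\be[|u^{n}_{t}(x)|^{p}]=\E[\exp(\tfrac12\sum_{j_{1},j_{2}}\alpha^{n,j_{1}j_{2}}_{t}-p\,\mathfrak{c}^{(n)}_{\rho,H_{0},\bh}t)]$. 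Separating the diagonal and inserting $\tfrac12\E[\beta^{n,B}_{t}]=\mathfrak{c}^{(n)}_{\rho,H_{0},\bh}t-r^{n}_{t}$ from \eqref{eq:Ebeta^n and c^n-bis}, this becomes
\[
\be\big[|u^{n}_{t}(x)|^{p}\big]=e^{-p\,r^{n}_{t}}\,\E\Big[\exp\Big(\sum_{1\le j_{1}<j_{2}\le p}\alpha^{n,j_{1}j_{2}}_{t}+\tfrac12\sum_{j=1}^{p}Y^{n,j}_{t}\Big)\Big],
\]
where $Y^{n,j}_{t}=\beta^{n,B^{j}}_{t}-\E[\beta^{n,B}_{t}]$ are i.i.d.\ copies of the variable $Y^{n}_{t}$ of \eqref{notation:Y^n_t}.

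Next I would apply Hölder's inequality with $p+1$ factors of common exponent $p+1$, which yields
\[
\be\big[|u^{n}_{t}(x)|^{p}\big]\le e^{p|r^{n}_{t}|}\,\Big(\E\big[e^{(p+1)\sum_{j_{1}<j_{2}}\alpha^{n,j_{1}j_{2}}_{t}}\big]\Big)^{\frac1{p+1}}\prod_{j=1}^{p}\Big(\E\big[e^{\frac{p+1}{2}Y^{n,j}_{t}}\big]\Big)^{\frac1{p+1}}.
\]
The prefactor $e^{p|r^{n}_{t}|}$ is bounded uniformly in $n$ by Proposition~\ref{prop:r-n-convol}, and $\E[e^{\frac{p+1}{2}Y^{n,j}_{t}}]=\E[e^{\frac{p+1}{2}Y^{n}_{t}}]$ is bounded uniformly in $n$ by Lemma~\ref{lem: integrability of Y}. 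Hence the entire statement reduces to proving the uniform off-diagonal exponential integrability
\[
\sup_{n\ge 1}\E\Big[\exp\Big(\lambda\sum_{1\le j_{1}<j_{2}\le p}\alpha^{n,j_{1}j_{2}}_{t}\Big)\Big]<\infty\qquad\text{for every }\lambda>0 .
\]
Granting this, $\sup_{n}\be[|u^{n}_{t}(x)|^{p}]<\infty$, so Fatou's lemma gives $\be[|u_{t}(x)|^{p}]\le\liminf_{n}\be[|u^{n}_{t}(x)|^{p}]<\infty$, and Lyapunov's inequality upgrades this to all real $p\ge 1$, which is the claim.

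For the off-diagonal bound, a further Hölder inequality over the $\binom{p}{2}$ pairs reduces matters to $\sup_{n}\E[\exp(\Lambda\,\alpha^{n,12}_{t})]<\infty$ for every $\Lambda>0$. Write $\E[e^{\Lambda\alpha^{n,12}_{t}}]\le\sum_{m\ge 0}\frac{\Lambda^{m}}{m!}\E[|\alpha^{n,12}_{t}|^{m}]$ and $\E[|\alpha^{n,12}_{t}|^{m}]\le(\E[(\alpha^{n,12}_{t})^{2m}])^{1/2}$; the key point is that, exactly as in \eqref{eq: nth moment of bar Z_b^deta 1},
\[
\E\big[(\alpha^{n,12}_{t})^{2m}\big]=\int_{(\R^{d+1})^{2m}}\Big|\E\Big[\prod_{k=1}^{2m}\int_{0}^{t}e^{\ii(\lambda_{k}s+\xi_{k}\cdot B_{s})}\,ds\Big]\Big|^{2}\mu_{0}^{n}(d\lambda)\,\mu^{n}(d\xi),
\]
which is nonnegative and, since $\mu_{0}^{n}\le\mu_{0}$ and $\mu^{n}\le\mu$ (cf.\ \eqref{domination uniform in n}), is dominated by $\E[(\alpha^{12}_{t})^{2m}]$. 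Now \cite[Inequality~(3.1)]{Ch18}, recalled in \eqref{Xia's sharp estimate on moments}, gives $\E[(\alpha^{12}_{t})^{2m}]\le C^{2m}((2m)!)^{d-H}\,t^{2(2H_{0}+H-d)m}$, and with $(2m)!\le 4^{m}(m!)^{2}$ this leads to $\E[|\alpha^{n,12}_{t}|^{m}]\le C_{1}^{m}(m!)^{d-H}$ for a constant $C_{1}=C_{1}(t,H_{0},\bh)$ independent of $n$. Since the subcritical assumption \eqref{eq:sko-subcritical-regime} forces $d-H<1$, the series $\sum_{m}\frac{\Lambda^{m}}{m!}C_{1}^{m}(m!)^{d-H}=\sum_{m}(\Lambda C_{1})^{m}(m!)^{d-H-1}$ converges for arbitrary value of $\Lambda C_{1}$, which establishes the off-diagonal bound.

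The decisive step is precisely this last estimate: one needs exponential integrability of $\sum_{j_{1}<j_{2}}\alpha^{n,j_{1}j_{2}}_{t}$ at an \emph{arbitrarily large} rate (the Hölder exponents induced by the $p$ fluctuation terms $Y^{n,j}_{t}$ grow with $p$) and, crucially, \emph{uniformly} in the mollification index $n$. This is made possible only by two genuinely subcritical features: the superfactorial gain $(m!)^{d-H-1}\to 0$ afforded by $d-H<1$, which renders the exponential moments finite for every rate, and the monotonicity $\mu_{0}^{n}\le\mu_{0}$, $\mu^{n}\le\mu$, which kills the $n$-dependence. Everything else — the Gaussian-chaos identity for $\be[|u^{n}_{t}(x)|^{p}]$, the two Hölder splittings, and the passage to the limit via Fatou — is routine once Proposition~\ref{prop:r-n-convol} and Lemma~\ref{lem: integrability of Y} are in hand.
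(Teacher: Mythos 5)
Your proof is correct and follows essentially the same route as the paper: the same Feynman--Kac decomposition isolating the off-diagonal terms $\alpha^{n,j_1j_2}_t$, the fluctuations $Y^{n,j}_t$ and the constant $\mathfrak{c}^{(n)}_{\rho,H_0,\bh}t-\tfrac12\E[\beta^{n,B}_t]$, then Proposition \ref{prop:r-n-convol}, Lemma \ref{lem: integrability of Y}, H\"older and Fatou. The only cosmetic difference is that you re-derive the uniform exponential integrability of the cross terms from the moment domination and \cite[Inequality (3.1)]{Ch18} (i.e.\ \eqref{Xia's sharp estimate on moments}), whereas the paper simply cites Proposition \ref{prop:FK-representation} together with the domination $\E[\exp(\lambda\alpha^{j_1j_2;n}_t)]\le\E[\exp(\lambda\alpha^{j_1j_2}_t)]$ -- the content is the same.
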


\begin{remark}
The extension of Proposition \ref{prop:finite-moments-subcritical-strato} to a general initial condition $\psi\in L^{\infty}(\R^{d})$ is straightforward. Details are omitted here for sake of clarity.
\end{remark}

\begin{proof}[Proof of Proposition \ref{prop:finite-moments-subcritical-strato}]
Recall from \eqref{fk-u-di} and \eqref{fk-u-s} that
$$u^{n}_t(x)=\mathbb{E}  \Big[  \exp \Big(V^{n,W}_t(B^x)-\mathfrak{c}^{(n)}_{\rho,H_0,\bh}\, t \Big)\Big],\  \ 
\text{and}\ \ u _{t}^{n,\di}(x)=\mathbb{E}  \Big[  \exp \Big(V^{n,W}_t(B^x)-\frac12 \beta^{n,B}_{t} \Big)\Big]   \, .$$
Without loss of generality, we assume that $p$ is an integer in the rest of the proof.
We also write $\beta_t^{n}(B^x)$ for $\beta^{n,B}_{t}$ in order to keep our notation visible enough.
We therefore get
\begin{align} \label{eq:Stra by Sko}
&u^{n}_t(x)=\E \left[ e^{V_t^{n,W}(B^x)-\frac12\beta_t^{n}(B^x)} \cdot e^{\frac12(\beta_t^{n}(B^x)-\E[\beta_t^{n}(B^x)])} \right] \cdot e^{\frac12\E[\beta_t^{n}(B^x)]-\mathfrak{c}^{(n)}_{\rho,H_0,\bh}\, t} \, .
\end{align}
Taking expectation in \eqref{eq:Stra by Sko}, we end up with
\begin{multline}\label{eq:pth moment u Stra}
\be \big[\left(u^n_t(x)\right)^p\big]
=
\be\,\E \left[ \prod_{k=1}^pe^{V_t^{n,W}(B^{k,x})-\frac12\beta_t^{n}(B^{k,x})} \cdot \prod_{k=1}^pe^{\frac12(\beta_t^{n}(B^{k,x})-\E[\beta_t^{n}(B^{k,x})])} \right] \\
\times
 \prod_{k=1}^pe^{\frac12\E[\beta_t^{n}(B^{k,x})]-\mathfrak{c}^{(n)}_{\rho,H_0,\bh}\, t},
\end{multline}
where $\{B^{k,x}; k=1,\dots,p\}$ is a family of independent Brownian motions which is also independent from the noise $W$. Moreover, a standard application of Fubini's theorem in \eqref{eq:pth moment u Stra} yields
\begin{multline}
\be \big[\left(u^n_t(x)\right)^p\big]
=
\E\Bigg[ \be \left[\prod_{k=1}^pe^{V_t^{n,W}(B^{k,x})-\frac12\beta_t^{n}(B^{k,x})} \right] \\
\times 
\prod_{k=1}^pe^{\frac12(\beta_t^{n}(B^{k,x})-\E[\beta_t^{n}(B^{k,x})])} \Bigg]
\ 
 \prod_{k=1}^pe^{\frac12\E[\beta_t^{n}(B^{k,x})]-\,\mathfrak{c}^{(n)}_{\rho,H_0,\bh}\, t}.
 \label{eq:representation p moment u^n}
\end{multline}
Now, notice that
\begin{align}\label{eq: expectation with respect to W}\be\left[ \prod_{k=1}^pe^{V_t^{n,W}(B^{k,x})-\frac12\beta_t^{n}(B^{k,x})}\right]=\exp\left(\sum_{1\leq j_1<j_2\leq p}\alpha^{j_1j_2;n}_t\right),
\end{align}
where $\alpha^{j_1,j_2;n}_t$ is defined the same way as in \eqref{eq:def-alpha-j1j2} but with $\gamma, \gamma_0$ \lastchange{replaced} by $\gamma^n, \gamma_0^n$.  Plugging~\eqref{eq: expectation with respect to W} into \eqref{eq:representation p moment u^n}, we have
\begin{align*}
&\be\big[ \left(u^n_t(x)\right)^p\big]=
\E\left[ e^{\sum_{1\leq j_1<j_2\leq p}\alpha_t^{j_1j_2;n}}\, \prod_{k=1}^pe^{\frac12(\beta_t^{n}(B^{k,x})-\E[\beta_t^{n}(B^{k,x})])} \right] 
 \prod_{k=1}^pe^{\frac12\E[\beta_t^{n}(B^{k,x})]-\mathfrak{c}^{(n)}_{\rho,H_0,\bh}\, t}.
\end{align*}
On the other hand, invoking similar considerations as for \eqref{domination uniform in n}, we get that for any $\lambda>0$
\begin{align*}
\mathbb{E}\lc \exp\left(\lambda\alpha^{j_1j_2;n}_t\right) \rc
\leq
\mathbb{E}\lc\exp\left(\lambda\alpha^{j_1j_2}_t\right) \rc.
\end{align*}
The rest of the proof then follows from Proposition \ref{prop:FK-representation},  Proposition \ref{prop:r-n-convol}, Lemma \ref{lem: integrability of Y} and a simple application of Fatou's lemma and H\"{o}lder's inequality. 
\end{proof}


\begin{thebibliography}{}

\bibitem{AC2015}R. Allez and K. Chouk: The continuous Anderson Hamiltonian in dimension two. {\it arXiv:1511.02718}, (2015).


\bibitem{BCR09}
R. Bass, X. Chen and J. Rosen: Large deviations for Riesz potential of additive processes. {\it Ann. Inst. Henri Poincar\'{e} Probab. Stat.} {\bf 45} (2009), no. 3, 626-666.



\bibitem{CCGT}
P. Chakraborty,  X. Chen, B. Go and S. Tindel:
Quenched asymptotics for a 1-d stochastic heat equation driven by a rough spatial noise.
To appear in {\it Stoch. Proc. Appl.}



\bibitem{CH}
A. Chandra and M. Hairer:
An analytic BPHZ theorem for regularity structures.
{\it Arxiv preprint} (2016).



\bibitem{Ch-bk}
X. Chen: \emph{Random Walk Intersections: Large Deviations and Related Topics.}
American Mathematical Society. (2008)



\bibitem{Ch12}
X. Chen: Quenched asymptotics for Brownian motion of renormalized Poisson potential and for the related parabolic Anderson models.
{\it Ann. Probab.} {\bf 40} (2012), no. 4, 1436-1482.



\bibitem{Ch14}
X. Chen: Quenched asymptotics for Brownian motion in generalized Gaussian potential.
{\it Ann. Probab.} {\bf 42} (2014), no. 2, 576-622.



\bibitem{Ch17}
X. Chen: Moment asymptotics for parabolic Anderson equation with fractional time-space noise in Skorokhod regime. 
{\it Ann. Inst. Henri Poincar\'e Probab. Stat.} {\bf 53} (2017), no. 2, 819-841.



\bibitem{Ch18}
X. Chen: Parabolic Anderson model with rough or critical Gaussian noise.
{\it Ann. Institut Henri Poincar\'e Probab. Stat.} {\bf 55} (2019), no. 2, 941-976.  



\bibitem{Ch19}
X. Chen: Parabolic Anderson model with a fractional Gaussian noise that is rough in time. 
{\it Ann. Institut Henri Poincar\'e Probab. Stat.} {\bf 56} (2020), no. 2, 792-825.


\bibitem{CDOT2}
X. Chen, A. Deya, C. Ouyang and S. Tindel:
A $K$-rough path above the space-time fractional Brownian motion.
In preparation.

\bibitem{CHNT}
X. Chen, Y. Hu, D. Nualart and S. Tindel:
Spatial asymptotics for the parabolic Anderson model driven by a Gaussian rough noise. 
{\it Electron. J. Probab.} {\bf 22} (2017).



\bibitem{CHSX} 
X. Chen, Y. Hu, J. Song and F. Xing:
Exponential asymptotics for time-space Hamiltonians.
{\it Ann. Institut Henri Poincar\'e Probab. Stat.} {\bf 51} (2015), 1529-1561.



\bibitem{CFK}
D. Conus and D. Khoshnevisan: 
On the existence and position of the farthest peaks of a family of stochastic heat and wave equations. 
{\it Probab. Theory Related Fields} {\bf 152} (2012), no. 3-4, 681-701.



\bibitem{CFJK}
D. Conus, M. Foondun, M. Joseph and D. Khoshnevisan:
On the chaotic character of the stochastic heat equation II.
\textit{Probab. Theory Related Fields} {\bf 156} (2013), no. 3-4, 483-533.



\bibitem{CJK}
D. Conus, M. Joseph, and D. Khoshnevisan: 
On the chaotic character of the stochastic heat equation, before the onset of intermittency.
\textit{Ann. Probab.} \textbf{41} (2013), no. 3B, 2225-2260.



\bibitem{De16}
A. Deya: 
On a modelled rough heat equation.
{\it Probab. Theory Relat. Fields} {\bf 166} (2016), 1-65.



\bibitem{De17}
A. Deya: 
Construction and Shorohod representation of a fractional $K$-rough path. 
{\it Electron. J. Probab.} {\bf 22} (2017).



\bibitem{DZ}
A. Dembo and O. Zeitouni: 
{\it Large deviations techniques and applications}. Second edition. Applications of Mathematics (New York), {\bf 38}. Springer-Verlag, 1998. 



\bibitem{GH}
Y. Gu and W. Xu: 
Moments of 2D parabolic Anderson model. 
{\it Asymptot. Anal.} {\bf 108} (2018), no. 3, 151-161.



\bibitem{hai-14}
M. Hairer: A theory of regularity structures. {\it Invent. Math.} {\bf 198} (2014), no. 2, 269-504.



\bibitem{HL}
M. Hairer and C. Labb{\'e}: 
Multiplicative stochastic heat equations on the whole space. 
{\it J. Eur. Math. Soc.}  {\bf 20} (2018), no. 4, 1005-1054.



\bibitem{HKP}
T. Hida, H-H. Kuo, J. Potthoff and L. Streit:
{\it White noise. An infinite-dimensional calculus.} 
Kluwer Academic Publishers, 1993.



\bibitem{HHNT}
Y. Hu, J. Huang, D. Nualart and S. Tindel: Stochastic Heat Equations with General Multiplicative Gaussian Noises: H\"older Continuity and Intermittency.
{\it Electron. J. Probab.} {\bf 20} (2015), no. 55, 1-50.



\bibitem{HN} 
Y. Hu and D. Nualart: 
Stochastic heat equation driven by fractional noise and local time.  
\textit{Probab. Theory Related Fields} \textbf{143} (2009), no. 1-2, 285-328.

\bibitem{HLN}
J. Huang, K. L\^e, D. Nualart:
Large time asymptotics for the parabolic Anderson model driven by space and time correlated noise. 
{\it Stoch. Partial Differ. Equ. Anal. Comput.} {\bf 5} (2017), no. 4, 614-651.



\bibitem{Kh}
D. Khoshnevisan: Analysis of stochastic partial differential equations. 
CBMS Regional Conference Series in Mathematics, 119. American Mathematical Society, 2014.



\bibitem{konig_book}
W. K\"onig: \emph{The Parabolic Anderson Model: Random Walk in Random Potential.}
Birkh\"auser (2016).



\bibitem{Le}
K. L\^e:
A remark on a result of Xia Chen. 
{\it Statistics \& Probability Letters} {\bf 118} (2016), 124-126.



\bibitem{Nu-bk}
D. Nualart : {\it The Malliavin Calculus and Related Topics.} 
Second edition. Probability and its Applications (New York). Springer-Verlag, Berlin, 2006.


\bibitem{Nu-Za}
D. Nualart and  M. Zakai: Generalized multiple stochastic integrals and the representation of Wiener functionals. 
\textit{Stochastics} \textbf{23} (1988), 311-330.




\end{thebibliography}
\end{document}